\numberwithin{equation}{section}
\newtheorem{theorem}{Theorem}[section]
\newtheorem{lemma}[theorem]{Lemma}
\newtheorem{proposition}[theorem]{Proposition}
\newtheorem{remark}[theorem]{Remark}
\theoremstyle{definition}
\newcommand{\bR}{\mathbb R}
\newcommand{\bT}{\mathbb T}
\newcommand{\Div}{\operatorname{div}}
\newcommand{\curl}{\operatorname{curl}}
\renewcommand{\epsilon}{\varepsilon}
\providecommand{\abs}[1]{\left\vert#1\right\vert}
\providecommand{\norm}[1]{\left\Vert#1\right\Vert}
\providecommand{\norm}[1]{\left\Vert#1\right\Vert}
\def\dt{\partial_t}
\def\pa{\partial}
\def\RRvert2{\right \vert\! \right\vert}
\def\Lvert3{\left \vert\!\left\vert\!\left\vert}
\def\Rvert3{\right \vert\!\right\vert\!\right\vert}
\def\nab{\nabla}
\def\dt{\partial_t}
\def\hal{\frac{1}{2}}
\def\ls{\lesssim}
\def\nabak{\nab_{\mathcal{A}^\kappa}}
\def\a{\mathcal{A}}
\def\i{\mathcal{I}}
\def\j{\mathcal{J}}
\def\fj1{\mathcal{J}^{-1}}
\def\bp{\partial_z}
\def\nak{\nabla_{\mathcal{A}^\kappa}}
\def\ak{{\mathcal{A}}^{\kappa}}
\def\fk{{\psi^\kappa}}
\def\Rk{R^{\kappa}}
\def\Zk{Z^{\kappa}}
\def\Jk{J^{\kappa}}
\def\Ek{E^{\kappa}}
\title[FBP of MHD]{Well-posedness of axially symmetric incompressible ideal magnetohydrodynamic equations with vacuum under the Rayleigh-Taylor sign condition}
\author[X. Gu]{Xumin Gu}
\address[X. Gu]{Department of Mathematics, Shanghai University of Finance and Economics, People's Republic of China}
\email{gu.xumin@shufe.edu.cn}
\begin{document}


\begin{abstract}
We consider a free boundary problem for the axially symmetric incompressible ideal magnetohydrodynamic equations that describes the motion of the plasma in vacuum. Both the plasma magnetic field and vacuum magnetic field are tangent along the plasma-vacuum interface. Moreover, the vacuum magnetic field is composed in a non-simply connected domain and hence is non-trivial. Under the Rayleigh-Taylor sign condition on the free surface, we prove the local well-posedness of the problem in Sobolev spaces. Furthermore, we also prove the local well-posdeness under a more general  ``stability'' assumption for the initial data, which provided that the Rayleigh-Taylor sign condition is satisfied at all those points of the initial interface where the non-collinearity condition fails.
\end{abstract}

\maketitle

\section{Introduction}
\subsection{Eulerian formulation}
In this paper, we consider the free boundary problem of the axially symmetric incompressible ideal MHD equations:
\begin{equation}
  \label{mhd}
\begin{cases}
\partial_tu^r + (u^r \partial_r+u^z\partial_z) u^r-\dfrac{(u^{\theta})^2}{r} + \partial_r (P+\dfrac{1}{2}\abs{B}^2) = (B^r\partial_r+B^z\partial_z)B^r -\dfrac{(B^{\theta})^2}{r}&\text{in}\ \  \Omega(t),\\
\partial_tu^{\theta} + (u^r \partial_r+u^z\partial_z) u^{\theta}+\dfrac{u^{\theta}u^r}{r} = (B^r\partial_r+B^z\partial_z)B^{\theta} +\dfrac{B^{\theta}B^r}{r}&\text{in}\ \  \Omega(t),\\
\partial_tu^z + (u^r \partial_r+u^z\partial_z) u^z+ \partial_z (P+\dfrac{1}{2}\abs{B}^2) = (B^r\partial_r+B^z\partial_z)B^z &\text{in}\ \ \Omega(t),\\
\partial_tB^r +  (u^r \partial_r+u^z\partial_z) B^r = (B^r\partial_r+B^z\partial_z) u^r &\text{in}\ \  \Omega(t),\\
\partial_tB^{\theta} +  (u^r \partial_r+u^z\partial_z) B^{\theta}+\dfrac{B^ru^{\theta}}{r}= (B^r\partial_r+B^z\partial_z) u^{\theta}+\dfrac{u^rB^{\theta}}{r} &\text{in}\ \ \Omega(t),\\
\partial_tB^z +  (u^r \partial_r+u^z\partial_z) B^z = (B^r\partial_r+B^z\partial_z) u^z &\text{in}\ \ \Omega(t),\\
\partial_r u^r+\dfrac{u^r}{r}+\partial_z u^z =0 &\text{in}\ \ \Omega(t),\\
\partial_r B^r+\dfrac{B^r}{r}+\partial_z B^z =0 &\text{in}\ \ \Omega(t).
\end{cases}
\end{equation}
In the equation \eqref{mhd}, $u(t,x)=u^r(t,r,z)e_r+u^{\theta}(t,r,z)e_{\theta}+u^z(t,r,z)e_z$ is the Eulerian or spatial velocity field, $B=B^r(t,r,z)e_r+B^{\theta}(t,r,z)e_{\theta}+B^z(t,r,z)e_z$ is the magnet field, and $P$ denotes the pressure function of the fluid which occupies the moving vessel domain:
$$\Omega(t): \{(x_1, x_2, z)| 0\leq r< r(z, t), z\in \mathbb T\}.$$
Here $e_r=(\cos\theta,-\sin\theta,0), e_{\theta}=(-\sin\theta,\cos\theta,0), e_z=(0,0,1), r=\sqrt{(x^1)^2+(x^2)^2}, z=x^3$, $\theta=\arctan{\frac{x^2}{x^1}}$.
We require the following boundary condition on the free surface $\Gamma(t):=\partial\Omega(t)$:
\begin{equation}
\label{kbd}
V(\Gamma(t))=u\cdot n \,\,\text{on}\,\, \Gamma(t)
\end{equation}
and
\begin{equation}
\label{hbd}
B\cdot n=0,\,\,P+\dfrac{1}{2}\abs{B}^2=\dfrac{1}{2}\dfrac{C^2(t)}{r^2}, \,\,\text{on}\,\,\Gamma(t).
\end{equation}
The equation \eqref{kbd} is called the kinematic boundary condition which states that the free surface $\Gamma(t)$ moves with the velocity of the fluid, where $V(\Gamma(t))$ denote the normal velocity of $\Gamma(t)$ and $n$ is the outward normal of the domain $\Omega^+(t)$. The first condition of the equation \eqref{hbd} means the fluid is perfect conductor. The second condition expresses the continuity of pressure on the free interface and $C(t)$ is given by
\begin{equation}
\label{cformula}
C(t)=C(0)e^{\int_0^tA(\tau)\,d\tau}, \,\,A(t)=\dfrac{\int_{\mathbb T}(u\cdot n)\sqrt{1+(\partial_z r(z,t))^2}\,dz}{\int_{\mathbb T}(\ln R_S-\ln r(z,t))\,dz},
\end{equation}
$R_S$ is a constant larger than $r(z,t)$.

The system \eqref{mhd}-\eqref{hbd} can be used to describe the motion of the plasma confined inside a rigid wall and isolated from it by vacuum, which is one of laboratory plasma model problems (see \cite[Chapter 4.6.1]{Go_04}). For the details of the derivation of this system, we refer the reader to \cite[Section 1]{Gu_17}. Briefly, in the vacuum region $\Omega_v(t)$, we
assume the pre-Maxwell equations:
\begin{equation}
\label{premax}
\begin{cases}
\curl \mathcal{B} =0,\quad \Div \mathcal{B} =0 &\text{in }\Omega_v(t),\\
\curl \mathcal{E} =-\dt \mathcal{B},\quad \Div \mathcal{E} =0&\text{in }\Omega_v(t).
\end{cases}
\end{equation}
In the equations \eqref{premax}, $\mathcal{B}$ and $\mathcal{E}$ denotes the magnetic and electric fields in vacuum, respectively.
%
The boundary conditions are
\begin{equation}
\label{B1}
\mathcal{B}\cdot \nu=0,\quad \mathcal{E}\times\nu=0, \quad\text{on } \partial\Omega_w
\end{equation}
and
\begin{equation}
\label{B2}
\mathcal{B}\cdot n=0, n\times\mathcal E=(u\cdot n) \mathcal B\quad \text{on } \Gamma(t),
\end{equation}
where $\partial\Omega_w$ is a perfect conducting wall. Then under the axially symmetric settings, with the first equation of \eqref{B1} and the first equation of \eqref{B2}, we can derive a formula for the vacuum magnet filed $\mathcal B=\mathcal B^{\theta}(r,z,t) e_{\theta}$:
\begin{equation}
\mathcal B^{\theta}=\dfrac{C(t)}{r}.
\label{vab}
\end{equation}
And by considering the elliptic system of the vacuum electronic field:
\begin{equation*}
\begin{cases}
\nabla\times \mathcal E=-\dfrac{C'(t)}{r},\,\,&\text{in}\,\,\Omega_v(t),\\
\nabla\cdot \mathcal E=0, \,\,&\text{in}\,\,\Omega_v(t),\\
n\times \mathcal E= (u\cdot n)\dfrac{C(t)}{r} \,\,&\text{on}\,\, \Gamma(t), \\\nu\times \mathcal E=0 \,\,&\text{on}\,\, \pa \Omega_w,
\end{cases}
\end{equation*}
we can derive the formula for $C(t)$: \eqref{cformula}.

On the other hand, the motion of the plasma is connected with the vacuum through the jump condition on the interface $\Gamma(t)$:
\begin{equation*}
  \left(\big(P  + \dfrac{1}{2}|B|^2\big)I  - B\otimes B\right)n=\left(  \dfrac{1}{2}|\mathcal{B}|^2I  -\mathcal{B}\otimes\mathcal{ B}\right)n\quad\text{on } \Gamma(t)
\end{equation*}
and
\begin{equation}
\label{ebd}
 (B-\mathcal{B})\cdot n  = 0, \quad\text{on } \Gamma(t).
\end{equation}
Hence the plasma-vacuum interface problem reduces to the free boundary problem \eqref{mhd}--\eqref{hbd}.
Our purpose of this paper is to establishing the local well-posedness for this problem.
\subsection{Lagrangian reformulation}
We tranform the Eulerian problem \eqref{mhd}-\eqref{hbd} on the moving domain $\Omega(t)$ to be one on the fixed domain $\Omega$ by the use of Lagrangian coordinates.
Let $x\in\Omega$ be the Lagrangian coordinate and $\eta(x,t)$ be the Eulerian coordinate, which means $\eta(x,t)\in\Omega(t)$ denote the "position" of the fluid particle $x$ at $t$. Thus,
 $$\partial_t\eta(x,t)=u(\eta(x,t),t)\,\,\text{for}\,\, t>0,\,\,\eta(x,0)=x.$$
Now we denote $R(x,t)=\sqrt{(\eta_1)^2+(\eta_2)^2}, \Theta(x,t)=\arctan \dfrac{\eta_2}{\eta_1}, Z(x,t)=\eta_3$, and $r=\sqrt{(x_1)^2+(x_2)^2}$, $\theta=\arctan\dfrac{x_2}{x_1}$, $z=x_3$. Then we can derive
\begin{equation}
	\label{rst}
	\begin{cases}
&\partial_tR=u^r\left(R(x,t),Z(x,t),t\right),\\
&\partial_t\Theta=\dfrac{1}{R(x,t)}u^{\theta}\left(R(x,t),Z(x,t),t\right),\\
&\partial_tZ=u^z\left(R(x,t),Z(x,t),t\right).
\end{cases}
\end{equation}
and
\begin{equation}
R(x,0)=r, \Theta(x,0)=\theta, Z(x,0)=z.
	\label{inz}
\end{equation}
From the first and third equation of \eqref{rst} and the initial data \eqref{inz}, we have
\begin{equation}
	\label{rzRz}
	R(x,t)=R(r,z,t), \quad Z(x,t)=Z(r,z,t).
\end{equation}
And for $\Theta$, we have \begin{equation}
	\label{dtheta}
	\Theta(r,\theta, z, t)=\theta+\int_0^t\dfrac{u^{\theta}(R,Z,\tau)}{R}\,d\tau=\theta+\hat\Theta(r,z,t).
\end{equation}
Now we denote
\begin{equation}
	\label{deflag}
	\begin{cases}
	&v^r(r,z,t)=u^r\left(R, Z, t\right), v^{\theta}(r,z,t)=u^{\theta}\left(R, Z, t\right), v^z(r,z,t)=u^z\left(R, Z, t\right),\\
	&b^r(r,z,t)=B^r\left(R, Z, t\right), b^{\theta}(r,z,t)=B^{\theta}\left(R, Z, t\right), b^z(r,z,t)=B^z\left(R, Z, t\right),\\&q(r,z,t)=P\left(R, Z, t\right)+\dfrac{1}{2}|B|^2\left(R, Z, t\right),
\end{cases}
\end{equation}
and denote the deformation tensor between $(R, Z)$ and $(r, z)$ as $\mathcal{F}_{ij}=\partial_{a^j}\zeta^i(r,z,t)$, where $\zeta=(R, Z), a=(r, z)$, (e.g. $\mathcal F_{11}=\partial_r R$).
Then we have the following Lagrangian version of \eqref{mhd} in the fixed reference domain $\Omega$:
\begin{equation}
	\label{eq:mhdo}
\begin{cases}
	\partial_t v^r-\dfrac{(v^{\theta})^2}{R}+\pa^\a_{R} q
=(b^r\pa^\a_R+b^z\pa^\a_Z)b^r-\dfrac{(b^{\theta})^2}{R}\,\,&\text{in}\,\,\Omega,\\
\partial_t v^{\theta}+\dfrac{v^{\theta}v^r}{R}
=(b^r\pa^\a_R+b^z\pa^\a_Z)b^{\theta}+\dfrac{b^rb^{\theta}}{R}\,\,&\text{in}\,\,\Omega,\\
\partial_t v^z+\pa^\a_Z q
=(b^r\pa^\a_R+b^z\pa^\a_Z)b^z\,\,&\text{in}\,\,\Omega,\\
\partial_tb^r =(b^r\pa^\a_R+b^z\pa^\a_Z) v^r\,\,&\text{in}\,\,\Omega,\\
\partial_tb^{\theta} +\dfrac{b^rv^{\theta}}{R}=(b^r\pa^\a_R+b^z\pa^\a_Z) v^{\theta}+\dfrac{v^rb^{\theta}}{R}\,\,&\text{in}\,\,\Omega,\\
\partial_tb^z =(b^r\pa^\a_R+b^z\pa^\a_Z) v^z\,\,&\text{in}\,\,\Omega,\\
\pa^\a_R(Rv^r)+\pa^\a_Z(Rv^z)=0\,\,&\text{in}\,\,\Omega,\\
\pa^\a_R(Rb^r)+\pa^\a_Z(Rb^z)=0\,\,&\text{in}\,\,\Omega,\\
 (f ,v, b)|_{t=0}=(\text{Id}, v_0, b_0).
\end{cases}
\end{equation}
where $\a=\mathcal{F}^{-T}$, $\pa^\a_{\zeta_i}:=\a_{ij}\partial_{a_j}$.
Here we donote
	\begin{equation}
		\Omega:=\{(x_1,x_2,x_3)|(x_1)^2+(x_2)^2< R_0, x_3\in \mathbb T\}
		\label{domain}
	\end{equation}
Two dynamic boundary conditions become:
\begin{equation*}
\begin{cases}
	q=\dfrac{1}{2}\dfrac{C^2(t)}{R^2}\,\,&\text{on}\,\,\Gamma\times(0,T],\\
(b^r,b^z) \a N =0\,\,&\text{on}\,\,\Gamma\times(0,T]\label{hebd}
\end{cases}
\end{equation*}
where $\Gamma:=\{(x_1,x_2, z)|(x_1)^2+(x_2)^2= R_0, z\in \mathbb T\}$, $N=(1,0)$ and
$$C(t)=C(0)e^{\int_0^t }A(\tau)\,d\tau,
	A(t)=\dfrac{\int_{\mathbb T}\left(v^r\partial_z Z(R_0,z,t)-v^z\partial_z R(R_0,z,t)\right)\,dz}{\int_{\mathbb T}\left(\ln R_S-\ln R(R_0,z,t)\right)\partial_z Z\,dz}.$$

By following the idea used in \cite{GW_16, Gu_17}, we can transfer the system \eqref{eq:mhdo} to a free-surface incompressible Euler system with a forcing term induced by the flow map. That is, by direct calculation, we have
\begin{equation*}
	\partial_t\left((b^r,b^z)\a\right)=0,
\end{equation*}
and hence
\begin{equation}
	\label{hformula}
	\begin{cases}
	b^r=(b_0^r\partial_r+b_0^z\partial_z) R,\\
	b^z=(b_0^r\partial_r+b_0^z\partial_z) Z.
 \end{cases}
\end{equation}
Then we plug \eqref{hformula} into the equation for $b^{\theta}$, we have
\begin{equation*}
	\partial_t b^{\theta}-\dfrac{v^rb^{\theta}}{R}=(b_0^r\partial_r+b_0^z\partial_z)v^{\theta}-\dfrac{v^{\theta}(b_0^r\partial_r+b_0^z\partial_z)R}{R}
\end{equation*}
Using \eqref{rst}, we have
\begin{equation*}
\partial_t\left(\dfrac{b^{\theta}}{R}\right)=\partial_t\left(\left(b_0^r\partial_r+b_0^z\partial_z \right)\Theta\right)
\end{equation*}
and
\begin{equation}
	\label{hfor2}
b^{\theta}=R(b_0^r\partial_r+b_0^z\partial_z)\Theta+\dfrac{Rb_0^{\theta}}{r}.
\end{equation}
Thus, with \eqref{hformula} and \eqref{hfor2}, we arrive at:
\begin{equation}
	\label{eq:mhd}
\begin{cases}
	\partial_t R= v^r\,\,&\text{in}\,\,\Omega,\\
		\partial_t Z = v^z\,\,&\text{in}\,\,\Omega,\\
\partial_t v^r-\dfrac{(v^{\theta})^2}{R}+\pa^\a_R q
=(b_0\cdot\nabla)^2R-R(b_0\cdot\nabla \Theta)^2\,\,&\text{in}\,\,\Omega,\\
\partial_t v^z+\pa^\a_Z q
=(b_0\cdot\nabla)^2Z\,\,&\text{in}\,\,\Omega,\\
\pa^\a_Rv^r+\dfrac{v^r}{R}+\pa^\a_Zv^z=0\,\,&\text{in}\,\,\Omega,\\
\partial_t \Theta = \dfrac{v^{\theta}}{R}\,\,&\text{in}\,\,\Omega,\\
\partial_t v^{\theta}+\dfrac{v^{\theta}v^r}{R}
=(b_0\cdot\nabla)(Rb_0\cdot\nabla\Theta)+b_0\cdot\nabla Rb_0\cdot\nabla\Theta\,\,&\text{in}\,\,\Omega,\\
(R,\Theta, Z, v)|_{t=0}=(r, \theta, z, u_0).
\end{cases}
\end{equation}
with boundary condition:
\begin{equation}
	q=\dfrac{C^2(t)}{R^2}\quad \text{on } \Gamma, C(t)=C(0)e^{\int_0^t }A(\tau)\,d\tau
	\label{qbdd}
\end{equation}
where $b_0\cdot\nabla=b_0^r\partial_r+b_0^z\partial_z+\dfrac{1}{r}b_0^{\theta}\partial_{\theta}$, $A(t)=\dfrac{\int_{\mathbb T}\left(v^r\partial_z Z-v^z\partial_z R(R_0,z,t)\right)\,dz}{\int_{\mathbb T}\left(\ln R_S-\ln R(R_0,z,t)\right)\partial_zZ\,dz}.$
In the system \eqref{eq:mhd}, the initial magnet field $b_0$ can be regarded as a parameter vector that satisfies
\begin{equation}
\label{bcond}
\partial_rb_0^r+\dfrac{1}{r}b_0^r+\partial_zb_0^z=0\,\,\text{in}\,\, \Omega \,\,\text{and}\,\, b_0^r=0  \,\,\text{on}\,\, \Gamma.
\end{equation}
\subsection{Previous works}
 Free boundary problems in fluid mechanics have important physical background and have been studied intensively in the mathematical community. There are a huge amount of mathematical works, and we only mention briefly some of them below that are closely related to the present work,  that is, those of the incompressible Euler equations and the related ideal MHD models.

For the incompressible Euler equations, the early works were focused on the irrotational fluids, which began
with the pioneering work of Nalimov \cite{N} of the local well-posedness for the small initial data and was generalized to the
general initial data by the breakthrough of Wu \cite{Wu1,Wu2} (see also Lannes \cite{Lannes}). For the irrotational inviscid fluids, certain dispersive effects can be used to establish the global well-posedness for the small initial data; we refer to  Wu \cite{Wu3,Wu4}, Germain, Masmoudi and Shatah \cite{GMS1, GMS2}, Ionescu and Pusateri \cite{IP,IP2} and Alazard
and Delort \cite{AD}. For the general incompressible Euler equations, the first local well-posedness in 3D was obtained by Lindblad \cite{Lindblad05} for the case without surface tension (see Christodoulou and Lindblad \cite{CL_00} for the a priori estimates) and
by Coutand and Shkoller \cite{CS07} for the case with (and without) surface tension. We also refer to the results of Shatah and Zeng \cite{SZ} and Zhang and Zhang
\cite{ZZ}. Recently, the well-posedness in conormal Sobolev spaces can be found by the the inviscid limit of the free-surface incompressible Navier-Stokes equations, see Masmoudi and Rousset \cite{MasRou} and  Wang and Xin \cite{Wang_15}.

The study of free boundary problems for the ideal MHD models seems far from being complete; it attracts many research interests, but up to now only few well-posedness theory for the nonlinear problem could be found. For the plasma-vacuum interface model that a surface current $J$ is added as an outer force term to the vacuum pre-Maxwell system \eqref{premax}, with the non-collinearity condition holding for two magnet fields on the boundary:
\begin{equation}
\label{st}
\abs{B\times \mathcal B}>0\,\,\text{on}\,\,\Gamma(t),
\end{equation} the well-posedness
of the nonlinear compressible problem was proved in Secchi and Trakhinin \cite{Secchi_14} by
the Nash-Moser iteration based on the previous results on the linearized problem \cite{Trak_10,Secchi_13}. The well-posedness of the linearized incompressible problem was proved by Morando, Trakhinin and Trebeschi \cite{Mo_14}, the nonlinear incompressible problem was sloved by Sun, Wang and Zhang \cite{Sun_17} very recently. In \cite{Gu_17}, instead of adding surface current $J$, the author considered an axially symmetric case of ideal MHD model that the vacuum magnet field is also non-trivial and the author established local well-posedness under the non-collinearity condition. On the other hand, Hao and Luo \cite{Hao_13} established
a priori estimates for the incompressible plasma-vacuum interface problem under the Rayleigh-Taylor sign condition:
\begin{equation}
\label{taylor}
\dfrac{\partial\left(P+\dfrac{1}{2}\abs{B}^2-\dfrac{1}{2}\abs{\mathcal B}^2\right)}{\partial{n}}\leq -\epsilon<0\,\,\text{on}\,\,\Gamma(t),
\end{equation}
under the assumption that the strength of the magnetic field is constant on the
free surface by adopting a geometrical point of view \cite{CL_00}. Recently, Gu and Wang proved the well-posedness of the incompressible plasma-vacuum problem under \eqref{taylor} with the vacuum magnet field is zero. In this paper, the author would establish the well-posedness of the plasma-vacuum interface problem under \eqref{taylor} in an axially symmetric setting. Furthermore, the author would also prove the local well-posdeness under a more general  ``stability'' assumption for the initial data, which provided that the Rayleigh-Taylor sign condition is satisfied at all those points of the initial interface where the non-collinearity condition fails. Finally, we also mention some works about the current-vortex sheet problem, which describes a velocity and magnet field discontinuity in two ideal MHD flows. The nonlinear stability of compressible current-vortex sheets was solved independently by Chen and Wang \cite{Chen_08} and Trakinin \cite{Trak_09} by using the Nash-Moser iteration. For incompressible current-vortex sheets, Coulombel, Morando, Secchi and Trebeschi \cite{CMST} proved an a priori estimate for the nonlinear problem under a strong stability condition, and  Sun, Wang and Zhang \cite{Sun_15} solved the nonlinear stability.

\section{Main Result}
Before stating our results of this paper, we may refer the readers to our notations and conveniences in Section \ref{Notation}.

We define the higher order energy functional
\begin{equation}
\label{edef}
\mathfrak{E}(t)=\norm{(v^r, v^{\theta}, v^z)}_4^2+\norm{(R, Z)}_4^2+\norm{(b_0\cdot\nabla R, Rb_0\cdot\nabla \Theta, b_0\cdot\nabla Z)}_4^2
\end{equation}
Then the main results in this paper are stated as follows.
\begin{theorem}\label{mainthm}
Suppose that the initial data $ (v_0^r, v_0^{\theta}, v_0^z) \in H^4_{r,z}(\Omega)$ with $v^r_0(0,z)=v^{\theta}_0(0,z)=0$ and $\partial_rv_0^r+\frac{1}{r}v_0^r+\partial_z v^z_0=0$, $(b_0^r, b_0^{\theta}, b_0^z) \in H^4_{r,z}(\Omega)$ with $b^r_0(0,z)=b^{\theta}_0(0,z)=0$ and $(b_0^r, b_0^{\theta}, b_0^z)$ satisfies \eqref{bcond} and that
\begin{equation}
\label{stc}
\partial_r\left(q_0-\dfrac{1}{2}\abs{\dfrac{C(0)}{r}}^2\right) \leq -\lambda<0\,\,\text{on}\,\,\Gamma
\end{equation}
holds initially. Then there exists a $T_0>0$ and a unique solution $(v^r, v^{\theta}, v^z, q, R, \Theta, Z)$ to  \eqref{eq:mhd} on the time interval $[0, T_0]$ which satisfies
\begin{equation}\label{enesti}
\sup_{t \in [0,T_0]} \mathfrak E(t) \leq P\left(\norm{\left(v_0^r, v_0^{\theta}, v_0^z\right)}_4^2+\norm{\left(b_0^r, b_0^{\theta}, b_0^z\right)}_4^2\right),
\end{equation}
where $P$ is a generic polynomial.

\end{theorem}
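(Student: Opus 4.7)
The plan is to construct the solution by a nonlinear approximation scheme and uniform a priori estimates at the $H^4$-level that close thanks to the Rayleigh--Taylor sign condition \eqref{stc}. Following the framework of \cite{GW_16, Gu_17} for free-boundary incompressible ideal MHD, I would (i) produce an approximate system by horizontal smoothing of the Lagrangian flow, preserving the divergence-free constraint and the boundary data; (ii) derive energy estimates uniform in the smoothing parameter $\kappa$; and (iii) pass to the limit $\kappa\to 0$, then conclude uniqueness by an energy estimate on the difference of two solutions.

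For the approximation, I would introduce a mollifier $\Lambda_\kappa$ acting on the tangential variable $z$ along $\Gamma$, replace the cofactor matrix $\a=\mathcal F^{-T}$ by $\a^\kappa$ built from the smoothed flow, and solve the resulting nonlinear system using an Alinhac-type good unknown to match the commutator $[\bar\partial,\pa^\a]$. The approximate divergence-free condition $\pa^{\a^\kappa}_R(R v^{r,\kappa})+\pa^{\a^\kappa}_Z(R v^{z,\kappa})=0$ and the approximate boundary condition $q^\kappa=(C^\kappa)^2/(2(R^\kappa)^2)$ must be arranged compatibly and the linearized problem solved by Galerkin plus a contraction-mapping argument on a short interval, producing smooth approximate solutions on which the uniform estimates can be performed.

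The energy estimates proceed by applying $\bar\partial^k:=\partial_z^k$ for $k\leq 4$ to the momentum equations in \eqref{eq:mhd} and testing against $\bar\partial^k v$. After integration by parts using the divergence-free constraint and the boundary condition \eqref{qbdd}, the pressure contribution produces
\begin{equation*}
\tfrac{1}{2}\tfrac{d}{dt}\int_{\Gamma}\Bigl(-\partial_r\bigl(q-\tfrac{1}{2}|C(t)/R|^2\bigr)\Bigr)\,|\bar\partial^k R|^2\,dS+\text{l.o.t.},
\end{equation*}
whose coefficient is positive on a short time interval by continuity from \eqref{stc}, yielding coercive $L^2(\Gamma)$ control of $\bar\partial^k R$. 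The magnetic nonlinearities $(b_0\cdot\nabla)^2R$, $R(b_0\cdot\nabla\Theta)^2$, $(b_0\cdot\nabla)^2Z$ are handled by integration by parts along $b_0\cdot\nabla$: since \eqref{bcond} gives $b_0^r|_{\Gamma}=0$, the operator $b_0\cdot\nabla$ is tangential at $\Gamma$, produces no boundary remainder, and generates $\tfrac{1}{2}\tfrac{d}{dt}\|b_0\cdot\nabla\bar\partial^k(R,R\Theta,Z)\|^2$ as a positive quantity in the energy, matching the magnetic component of $\mathfrak E$. Normal-direction regularity of $v$ and of the geometry is then recovered from the div-curl system together with elliptic regularity for $q$, which solves a Dirichlet problem obtained by taking $\diverge_\a$ of the momentum equations.

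The most delicate point will be the interplay between the nonlocal time factor $C(t)$ defined through \eqref{cformula} and the $\kappa$-smoothing: one must verify that the approximate $C^\kappa(t)$ stays compatible with the approximate divergence-free and boundary conditions, and that the contribution of $C'(t)$ in the pressure identity does not spoil the Rayleigh--Taylor coercivity. A second technical hurdle is the apparent singularity at the symmetry axis $r=0$ through factors $1/R$ and $1/r$; closing the estimates requires using that $v^r,v^\theta,b_0^r,b_0^\theta$ vanish on the axis and that this vanishing is transported by the flow, so these factors are absorbed at every order of differentiation. Once the $\kappa$-independent estimates are in hand, compactness delivers a solution, and the same energy method applied to the difference of two solutions yields uniqueness, giving \eqref{enesti}.
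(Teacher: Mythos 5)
Your high-level plan — mollify in the tangential variable, use Alinhac good unknowns, extract a hidden sign from the Rayleigh--Taylor condition at the boundary, recover normal derivatives from a div-curl system and an elliptic problem for $q$, then pass $\kappa\to 0$ — is exactly the paper's framework. However, the proposal glosses over, and in one place misstates, the two points that the paper explicitly identifies as the main technical hurdles.

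First, the coercive boundary quantity is \emph{not} $\int_\Gamma\bigl(-\partial_r(q-\tfrac12|C/R|^2)\bigr)|\bar\partial^k R|^2$. Because $q$ does not vanish on $\Gamma$ (here $q|_\Gamma=\tfrac12 C^2/(R^\kappa)^2$, in contrast to free-surface Euler or to the case of zero exterior magnetic field), the boundary integral $\mathfrak I$ contains two pieces — one coming from $\bar\partial^4 q$ and one from $-\bar\partial^4\zeta\cdot\nabla_{\mathcal A^\kappa}q$ — and the symmetric structure only appears after the algebraic identity
\begin{equation*}
\bar\partial^4\Lambda_\kappa^2 R-\bar\partial^4\Lambda_\kappa^2\zeta_j\,\mathcal A^\kappa_{j2}\,\partial_z\Lambda_\kappa^2 R=\partial_r R^\kappa\,\bar\partial^4\Lambda_\kappa^2\zeta_j\,\mathcal A^\kappa_{j1},
\end{equation*}
which follows from the cofactor relations $J^\kappa\mathcal A^\kappa_{11}=\partial_z Z^\kappa$, etc. What the Rayleigh--Taylor sign then controls is $\bigl|\mathcal A^\kappa_{i1}\bar\partial^4\Lambda_\kappa\zeta_i\bigr|_{L^2(\Gamma)}^2$, the \emph{normal} component of the top-order tangential derivative of the flow map, not $|\bar\partial^4 R|^2$. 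That identification is the crucial step and is not a lower-order remark.

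Second, your approximate system as written cannot yield $\kappa$-uniform tangential estimates. In the smoothed equation $\partial_t\zeta=\nu$ with a fully mollified geometry, two boundary error terms ($\mathcal I_{2b}$ and $\mathcal I_{4b}$ in the paper's notation) each involve $|\bar\partial^4\Lambda_\kappa\zeta|_0^2$ multiplied by a quantity that does not vanish uniformly in $\kappa$; they cancel only in the limit $\kappa\to0$, not for fixed $\kappa>0$. The paper fixes this by modifying the transport equation to $\partial_t\zeta=\nu+\psi^\kappa$, with $\psi^\kappa$ a harmonic extension of a specifically chosen boundary datum built to generate precisely the term $\mathcal I_5$ that algebraically cancels $\mathcal I_{2b}+\mathcal I_{4b}$. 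Without this correction (or an equivalent device) the good-unknown estimate does not close uniformly in $\kappa$, so "arranging the approximate boundary conditions compatibly" and then running Galerkin is not sufficient; a concrete nonlinear modification of the kinematic equation is required.
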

\begin{theorem}\label{mainthm2}
Suppose that the initial data $ (v_0^r, v_0^{\theta}, v_0^z) \in H^4_{r,z}(\Omega)$ with $v^r_0(0,z)=v^{\theta}_0(0,z)=0$ and $\partial_rv_0^r+\frac{1}{r}v_0^r+\partial_z v^z_0=0$, $(b_0^r, b_0^{\theta}, b_0^z) \in H^4_{r,z}(\Omega)$ with $b^r_0(0,z)=b^{\theta}_0(0,z)=0$ and $(b_0^r, b_0^{\theta}, b_0^z)$ satisfies \eqref{bcond}. Denote the set $\gamma$ as
$$\gamma:=\{(R_0, z)|b_0^z(R_0,z)=0\}\subset \Gamma.$$
If that
\begin{equation}
\label{ts}
\partial_r\left(q_0-\dfrac{1}{2}\abs{\dfrac{C(0)}{r}}^2\right) \leq -\lambda<0\,\,\text{on}\,\,\gamma
\end{equation}
holds initially. Then there exists a $T_0>0$ and a unique solution $(v^r, v^{\theta}, v^z, q, R, \Theta, Z)$ to  \eqref{eq:mhd} on the time interval $[0, T_0]$ which satisfies
\begin{equation}\label{enesti2}
\sup_{t \in [0,T_0]} \mathfrak E(t) \leq P\left(\norm{\left(v_0^r, v_0^{\theta}, v_0^z\right)}_4^2+\norm{\left(b_0^r, b_0^{\theta}, b_0^z\right)}_4^2\right),
\end{equation}
where $P$ is a generic polynomial.
\end{theorem}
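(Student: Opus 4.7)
The plan is to reuse the full energy machinery developed for Theorem~\ref{mainthm} and to intervene only at the step where the Rayleigh--Taylor boundary term is bounded, replacing the uniform sign bound \eqref{stc} by a localized argument. The strategy rests on two complementary mechanisms. On (a neighborhood of) the closed set $\gamma\subset\Gamma$, the hypothesis \eqref{ts} together with continuity of $\partial_r(q-C^2/(2R^2))$ preserves the Rayleigh--Taylor sign on a short time interval, so the original argument of Theorem~\ref{mainthm} applies there unchanged. On the complement, $|b_0^z|\ge\delta>0$, and since \eqref{bcond} forces $b_0^r\equiv 0$ on $\Gamma$, for any axisymmetric boundary quantity $w$ one has $(b_0\cdot\nabla)w|_\Gamma=b_0^z\,\partial_z w|_\Gamma$; hence the boundary tangential derivative $\partial_z$ can be recovered from $b_0\cdot\nabla$ via a smooth multiplier, and the magnetic-energy contributions $\|b_0\cdot\nabla R\|_4^2+\|Rb_0\cdot\nabla\Theta\|_4^2+\|b_0\cdot\nabla Z\|_4^2$ already built into $\mathfrak E(t)$ take over the stabilizing role that $-\partial_r(q-C^2/(2R^2))$ played in Theorem~\ref{mainthm}.

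Concretely, I would fix a smooth partition of unity $\chi_1^2+\chi_2^2\equiv 1$ on $\Gamma$ with $\supp\chi_1\subset U_1$, a small neighborhood of $\gamma$ on which the bound $\partial_r(q_0-C(0)^2/(2r^2))\le -\lambda/2$ survives by continuity, and $\supp\chi_2\subset U_2:=\{(R_0,z):|b_0^z|\ge\delta\}$ for some $\delta>0$, arranged so that $\Gamma=U_1\cup U_2$. Extend $\chi_1,\chi_2$ smoothly to a neighborhood of $\Gamma$ inside $\Omega$ along the normal tubular coordinate. Running the same fourth-order tangential/time differentiation and Alinhac good-unknown estimate as for Theorem~\ref{mainthm} produces an energy identity whose only new feature is the boundary contribution
\[
\mathcal{I}=-\int_\Gamma \partial_r\!\left(q-\tfrac{C^2(t)}{2R^2}\right)|\mathcal{V}|^2\,d\sigma,
\]
where $\mathcal{V}$ denotes the top-order boundary trace coming from the good unknown. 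Split $\mathcal{I}=\int_\Gamma\chi_1^2(\cdot)+\int_\Gamma\chi_2^2(\cdot)$. The $\chi_1$-piece is absorbed as $-\tfrac{\lambda}{4}\int_\Gamma\chi_1^2|\mathcal{V}|^2$ exactly as in Theorem~\ref{mainthm}. For the $\chi_2$-piece, pull a $\partial_z$ out of the tangential part of $\mathcal{V}$, rewrite $\chi_2\partial_z w=\chi_2(b_0^z)^{-1}b_0\cdot\nabla w$ on $\Gamma$, and apply the $H^{1/2}(\Gamma)$ trace to convert it into an interior norm of $b_0\cdot\nabla R$ (and, for the $\Theta$-component, of $Rb_0\cdot\nabla\Theta$), at the cost of a factor $\epsilon$ that is absorbed into the left-hand side plus lower-order terms that close by Gr\"onwall.

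The main obstacle is the pure-time-derivative component of $\mathcal{V}$, since the substitution $\partial_z\leftrightarrow(b_0^z)^{-1}b_0\cdot\nabla$ is a spatial identity that does not directly apply to $\partial_t^4$. I expect to resolve this by using the momentum equation in \eqref{eq:mhd} to trade a pair of time derivatives of $R$ for $\pa^\a_R q-(b_0\cdot\nabla)^2R+\text{l.o.t.}$; the pressure gradient is then controlled by the elliptic $q$-estimate, the $(b_0\cdot\nabla)^2R$ term feeds directly into the magnetic energy, and the residual ``bad'' $\partial_r q$ contribution is itself split again via the $\chi_1,\chi_2$ partition and handled inductively. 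With this boundary estimate closed, the remaining ingredients---the div--curl estimates, the elliptic bound for $q$, the tangentially smoothed approximation, and the passage to the limit---are structurally identical to those used in the proof of Theorem~\ref{mainthm} and deliver \eqref{enesti2} and local well-posedness on $[0,T_0]$.
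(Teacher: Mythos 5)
Your core strategy is exactly the paper's: split the boundary integral $\mathfrak I$ into a piece near $\gamma$ where the Rayleigh--Taylor sign condition survives by continuity (handled by the symmetric/transport structure as in Theorem~\ref{mainthm}), and a complementary piece where $|b_0^z|\ge\delta$, using $b_0^r|_\Gamma=0$ to write $\partial_z\zeta=(b_0^z)^{-1}b_0\cdot\nabla\zeta$ on the boundary and thereby trade the top-order tangential derivative of $\zeta$ for $b_0\cdot\nabla\zeta$, whose $H^4$ norm is already part of $\mathfrak E$; the paper then closes that piece by the $(H^{-1/2},H^{1/2})$ duality pairing rather than a trace-with-$\epsilon$-absorption, but this is the same mechanism. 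Two differences are worth noting. First, the paper performs a sharp set decomposition $\Gamma=\gamma'\,\cup\,(\Gamma\setminus\gamma')$ and records the resulting boundary energy $|\bp^4\Lambda_\kappa\zeta_i\a^\kappa_{i1}|^2_{L^2(\gamma')}$ in Proposition~\ref{te2}, whereas you propose a smooth partition of unity $\chi_1^2+\chi_2^2=1$; either works, but the paper's choice makes the statement of the localized boundary energy cleaner (your $\mathcal I$ should more precisely involve $|\bp^4\Lambda_\kappa\zeta_i\a^\kappa_{i1}|^2$ on $\Gamma$, since that, not the trace of the good unknown $\mathcal V$, is the quantity that emerges as a total time derivative). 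Second, and more substantively, the ``main obstacle'' you flag---that the substitution $\partial_z\leftrightarrow(b_0^z)^{-1}b_0\cdot\nabla$ does not reach a pure-time-derivative component of $\mathcal V$---does not arise in the paper at all: the tangential energy estimates (Propositions~\ref{te} and~\ref{te2}) apply only $\bp^4=\partial_z^4$, with no $\partial_t$ derivatives in the top-order good unknowns; the normal directions are recovered afterward from div--curl estimates (Propositions~\ref{curlprop}--\ref{nore}). So your auxiliary manoeuvre of trading $\partial_t^2R$ for $\pa^\a_R q-(b_0\cdot\nabla)^2R$ via the momentum equation is unnecessary, and dropping it would make the argument coincide with the paper's.
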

\begin{remark}
Recall the vacuum magnet field formula \eqref{vab},  the condition $\abs{b_0^z}\geq \delta>0$ is actually the non-collinearity condition \eqref{st} under the axially symmetric settings. Thus, under the axially symmetric settings, Theorem \ref{mainthm2} establishes the local well-posedness of the equation \eqref{eq:mhd} under a more general  ``stability'' assumption for the initial data, which provided that the Rayleigh-Taylor sign condition is satisfied at all those points of the initial interface where the non-collinearity condition fails.
\end{remark}
\subsection{Strategy of the proof}
The strategy of proving the local well-posedness for the inviscid free boundary problems consists of three main parts: the a priori estimates in certain energy functional spaces, a suitable approximate problem which is asymptotically consistent with the a priori estimates, and the construction of solutions to the approximate problem.  For the incompressible MHD equations \eqref{eq:mhd}, we derive our a priori estimates in the following way. First, we divide \eqref{eq:mhd} into two sub-systems: one is for $(v^r, v^z, q, R, Z)$, the other one is for $(v^{\theta}, \Theta)$ (see \eqref{sub1} and \eqref{sub2}). The a priori estimates for $(v^{\theta}, \Theta)$ can be obtained by standard energy method. This is because there is no pressure in this subsystem and no boundary integral needs to be considered. Here, one will meet the difficulty to deal with the singularity brought by the cylinder coordinates, i.e. the estimates of $\dfrac{v^{\theta}}{r}$. Hence, we will apply the high order Hardy inequality to control these terms. On the other hand, the estimates for $(v^r, v^z, q, R, Z)$ is more complicated and it is the main part of this paper. We shall use tangential energy estimates combining with divergence and curl estimates to close the a priori estimates of $(\nu, q, \zeta)$, where we denote $\nu=(v^r, v^z), \zeta=(R, Z)$. During this process, there are several difficulties to deal with. In the usual derivation of the a priori tangential energy estimates of \eqref{sub1} in the $H^4_{r,z}$ setting, one deduces
\begin{equation}
\label{tp}
\begin{split}
&\hal\dfrac{d}{dt} \int_{\Omega}\abs{\bp^4 \nu}^2+\abs{\bp^4 ( b_0\cdot\nabla\zeta)}^2+\underbrace{\int_{\Gamma}-\a_{mj}\partial_{a_j} q\bp^4{\zeta}_m\bp^4 \nu_i{\mathcal A}_{i1}+\bp^4q\bp^4\nu_i\a_{i1}}_{\mathcal{I}_b}
\\&\quad \approx \underbrace{\int_\Omega \bp^4 D\zeta \bp^4  \zeta+ \bp^4 \nabla\zeta\bp^4  q}_{\mathcal{R}_Q}+l.o.t.,
\end{split}
\end{equation}
The first difficulty one will meet is the loss of derivatives in estimating $\mathcal{R}_Q$ (by recalling the energy functional $\mathfrak{E}(t)$ defined by \eqref{edef}).  Our idea to overcome this difficulty is, motivated by \cite{MasRou,Wang_15, GW_16}, to use Alinac's good unknowns $
\mathcal{V} =\bp^4 \nu- \bp^4\zeta \cdot\nabla_\a \nu$ and $ \mathcal{Q} =\bp^4 q- \bp^4\zeta \cdot\nabla_\a q$, which derives a crucial cancellation observed by Alinhac \cite{Alinhac}, i.e., when considering the equations for $\mathcal{V} $ and $Q$, the term $\mathcal{R}_Q$ disappears. The second difficulty is to estimate the boundary integral $\mathcal{I}_b$. Recalling the boundary condition for $q$, there are two possible ways to control it. One is to control $\abs{\bp^4\zeta}_{1/2}$ and use $(H^{-1/2}, H^{1/2})$ dual estimate. This requirement of the boundary regularity can actually be obtained by the non-collinearity condition \eqref{stc} (see \cite{Gu_17}). The another possible way is that, if the integral $I_b$ has symmetric structure, one can  use the Rayleigh-Taylor condition and obtain the control of $\abs{\bp^4\zeta}_{1/2}$. If the vacuum magnet field is trivial, then $q=0$ on $\Gamma$, the symmetric structure is somehow easy to check, see \cite{CL_00, GW_16}. For our system \eqref{eq:mhd}, $q$ does not vanish on the boundary, the symmetric structure is not clear at the first glance. Fortunately, with careful calculation and the definition of $\a, J$, we can find the symmetric structure successfully. Briefly, since $q=\hal\frac{C^2}{R^2}$ on $\Gamma$, then we have
\begin{equation*}
\begin{split}
I_b&=\int_{\bT}-\a_{mj}\partial_{a_j} q\bp^4{\zeta}_m\bp^4 \nu_i{\mathcal A}_{i1}+\bp^4q\bp^4\nu_i\a_{i1}\\&= \int_{\mathbb T} -R_0\dfrac{C^2}{R^3}\left(\bp^4R-\bp^4\zeta_j\ak_{j2}\partial_zR\right)\a_{i1}\nu_i\,dz-\int_{\bT}R_0\bp^4\zeta_m \ak_{m1}\partial_r q \a_{i1}\nu_i\,dz+l.o.t.
\end{split}
\end{equation*}
We also have
 \begin{equation*}
\begin{split}
\bp^4 R-\bp^4\zeta_j\a_{j2}\partial_z R&=\bp^4 R\left(1-\a_{12}\partial_zR\right)+\bp^4Z(-\a_{22}\partial_zR)
\\&=\bp^4 R\left((J)^{-1}J+(J)^{-1}\partial_rZ\partial_zR\right)+\bp^4 Z(\a_{22}J\a_{21})\\&=\bp^4 R\left((J)^{-1}\partial_zZ\partial_rR\right)+\bp^4 Z(\a_{22}J\a_{21})\\&=\bp^4  R\left((J)\a_{11}\a_{22}\right)+\bp^4 Z(\a_{22}J\a_{21})\\&=\partial_rR\bp^4\zeta_j\a_{j1}
\end{split}
\end{equation*}
Combining these two observations, we arrive at the following symmetric structure:
\begin{equation*}
\begin{split}
I_b&=\int_{\bT}R_0\partial_r\left(\hal\dfrac{C(t)^2}{{R}^2}-q\right)\a_{j1}\bp^4 \zeta_j  \a_{i1} \bp^4 \nu_i\,dz+l.o.t
\end{split}
\end{equation*}
Then under the Rayleigh-Taylor condition, the tangential energy estimates can be finished. Doing the divergence and curl estimates is somehow standard and combining with the tangential energy estimates, we can close the a priori estimates.

After we obtaining the a priori estimates, we construct approximate system to \eqref{eq:mhd}, which is asymptotically consistent with the a priori estimates for the original system. This is highly nontrivial. Recalling that, under the Rayleigh-Taylor sign condition, the a priori estimates relies heavily on the geometric transport-type structure of the nonlinear problem, which will lost during the linearization approximation. Hence, we apply the nonlinear $\kappa$-approximation developed in \cite{GW_16} and we can derive $\kappa$-independent a priori estimates. What now remains in the proof of the local well-posedness of \eqref{eq:mhd} is to constructing solutions to the nonlinear $\kappa$-approximate problem \eqref{sub1} and \eqref{sub2}. This solvability can be obtained by the viscosity vanishing method used in \cite[Section 5.1]{GW_16}. Consequently, the construction of solutions to the incompressible MHD equations \eqref{eq:mhd} under the Rayleigh-Taylor sign condition is completed.

The local well-posedness under a more general  ``stability'' assumption for the initial data, which provided that the Rayleigh-Taylor sign condition is satisfied at all those points of the initial interface where the non-collinearity condition fails can be obtained by following idea. The main difficulty is still to obtain the estimate of $I_b$ in \eqref{tp}. We split the boundary into two parts, one part contains the points that non-collinearity condition holds, then the Rayleigh-Taylor sign condition is satisfied at another part. As a consequence, $I_b$ is estimated by the combination of two parts. For one part, we use the non-collinearity condition to improve the boundary regularity and use $(H^{-1/2}, H^{1/2})$ dual estimate. For another part, we use the symmetric structure with the Rayleigh-Taylor sign condition. Thus, the estimate of $I_b$ is obtained and we can prove the local well-posedness.
\section{Preliminary}
{\subsection{Notation}\label{Notation}
Einstein's summation convention is used throughout the paper, and repeated
Latin indices $i,j,$ etc., are summed from 1 to 2.
We use $C$ to denote generic constants, which only depends on the domain $\Omega$ and the boundary $\Gamma$, and  use $f\ls g$ to denote $f\leq Cg$.  We use $P$ to denote a generic polynomial function of its arguments, and the polynomial coefficients are generic constants $C$. We use $D$ to denote the spatial derives: $\partial_r, \bp$.


\subsubsection{Sobolev spaces}
For integers $k\geq0$, we define the axially symmetic Sobolev space $H^k_{r,z}(\Omega)$ to be the completion of the functions in $C^{\infty}(\bar{\Omega})$ in the norm
\begin{equation*}
	\|u\|_k:=\left(\sum_{|\alpha|\leq k}2\pi\int_{\mathbb T}\int_{0}^{R_0}r\abs{D^{\alpha}u(r,z)}^2\,drdz\right)^{1/2}
\end{equation*}
for a multi-index $\alpha\in \mathbb{Z}_{+}^2$. For real numbers $s\geq0$, the Sobolev spaces $H^s_{r,z}(\Omega)$ are defined by interpolation.

\noindent
On the boundary $\Gamma$, for functions $w\in H^k(\Gamma)$, $k\geq0$, we set
\begin{equation*}
	|w|_k:=\left(\sum_{\beta\leq k}2\pi R_0\int_{\mathbb T} \abs{\partial_z^{\beta}w(R_0,z)}^2\,dz\right)^{1/2}
\end{equation*}
for a multi-index $\beta\in\mathbb{Z}_+$. The real number $s\geq0$ Sobolev space $H^s(\Gamma)$ is defined by interpolation. The negative-order Sobolev spaces $H^{-s}(\Gamma)$ are defined via duality: for real $s\geq0$, $H^{-s}(\Gamma):=[H^s(\Gamma)]'$.
\subsection{Product and commutator estimates}

We recall the following product and commutator estimates.
\begin{lemma}
It holds that

\noindent $(i)$ For $|\alpha|=k\geq 0$,
\begin{equation}
\label{co0}
\norm{D^{\alpha}(gh)}_0 \ls \norm{g}_{k}\norm{h}_{[\frac{k}{2}]+2}+\norm{g}_{[\frac{k}{2}]+2}\norm{h}_{k}.
\end{equation}
$(ii)$ For $|\alpha|=k\geq 1$, we define the commutator
\begin{equation*}
[D^{\alpha}, g]h =D^{\alpha}(gh)-gD^{\alpha} h.
\end{equation*}
Then we have
\begin{align}
\label{co1}
&\norm{[D^{\alpha}, g]h}_0\ls\norm{D g}_{k-1}\norm{h}_{[\frac{k-1}{2}]+2}+\norm{D g}_{[\frac{k-1}{2}]+2}\norm{h}_{k-1}.
\end{align}
$(iii)$ For $|\alpha|=k\geq 2$, we define the symmetric commutator
\begin{equation*}
\left[D^{\alpha}, g, h\right] = D^{\alpha}(gh)-D^{\alpha}g h-gD^{\alpha} h.
\end{equation*}
Then we have
\begin{equation}
\label{co2}
\norm{\left[D^{\alpha}, g, h\right]}_0\ls\norm{D g}_{k-2}\norm{D h}_{[\frac{k-2}{2}]+2}+ \norm{D g}_{[\frac{k-2}{2}]+2}\norm{D h}_{k-2} .
\end{equation}
\end{lemma}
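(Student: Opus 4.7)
The plan is to prove all three estimates by the classical Moser argument---Leibniz expansion followed by a high-low splitting based on the number of derivatives falling on each factor---adapted to the axially symmetric weighted setting. The whole argument reduces to a single Sobolev embedding that I would verify first, after which (i), (ii), (iii) become essentially bookkeeping in the Leibniz rule.

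First I would record the embedding $H^{[k/2]+2}_{r,z}(\Omega)\hookrightarrow L^{\infty}(\Omega)$ for $k\ge 0$. Because the norm $\|\cdot\|_m$ integrates $r\,|D^{\alpha}u|^{2}$ over $(r,z)\in(0,R_{0})\times\mathbb T$, an axisymmetric extension of $u$ to the three-dimensional cylinder $\tilde\Omega:=\{(x_{1},x_{2},x_{3})\in\mathbb R^{3}\colon x_{1}^{2}+x_{2}^{2}<R_{0}^{2},\,x_{3}\in\mathbb T\}$ carries a genuine three-dimensional Sobolev norm controlled by $\|u\|_{m}$, modulo Hardy-type corrections at the axis that are standard for smooth axisymmetric functions. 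The three-dimensional embedding $H^{2}\hookrightarrow L^{\infty}$ then delivers the desired bound as soon as $m\ge 2$, which holds for every $m=[k/2]+2$.

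With that in hand, I would prove (i) by writing $D^{\alpha}(gh)=\sum_{\beta\le\alpha}\binom{\alpha}{\beta}D^{\beta}g\,D^{\alpha-\beta}h$ and splitting the sum according to whether $|\beta|\le[k/2]$ or $|\alpha-\beta|\le[k/2]$ (these two cases cover every $\beta\le\alpha$). In the first case I place $D^{\beta}g$ in $L^{\infty}$ (controlled by $\|g\|_{[k/2]+2}$) and $D^{\alpha-\beta}h$ in $L^{2}$ (controlled by $\|h\|_{k}$); in the second case the roles are swapped. Summing over $\beta$ yields \eqref{co0}. For (ii) the commutator equals the same Leibniz sum restricted to $|\beta|\ge 1$, so every remaining term has the form $D^{\beta'}(Dg)\cdot D^{\alpha-\beta}h$ with total order $k-1$; reusing the splitting with $(Dg,h)$ and $k-1$ in place of $(g,h)$ and $k$ produces \eqref{co1}. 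For (iii) only indices $1\le|\beta|\le k-1$ survive, so both $g$ and $h$ carry at least one derivative, and the identical argument applied to $(Dg,Dh)$ at total order $k-2$ gives \eqref{co2}.

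The only delicate step is the Sobolev embedding: the degenerate weight $r$ at the axis could in principle spoil the $L^{\infty}$ control, but the identification with a genuine three-dimensional axisymmetric domain, together with Hardy's inequality at $r=0$ for the smooth axisymmetric functions used in the completion, makes the embedding harmless. Once that point is settled, the rest of the proof is purely combinatorial and parallels the familiar unweighted Moser estimates, so no further obstacle is anticipated.
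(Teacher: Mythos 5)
Your proposal is correct and follows essentially the same route the paper takes: a Leibniz expansion followed by a high-low splitting, placing the lower-order factor in $L^{\infty}$ via the embedding $H^{2}_{r,z}\hookrightarrow L^{\infty}$ and the higher-order factor in $L^{2}_{r,z}$, exactly as in the paper's one-line sketch citing Lemma A.1 of Wang--Xin. The only addition on your part is the (correct, and worth stating) justification that the weighted $H^{m}_{r,z}$ norm is equivalent to the genuine three-dimensional Sobolev norm on the solid cylinder for axisymmetric functions, so the embedding is unaffected by the degeneracy at $r=0$.
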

\begin{proof}
The proof of these estimates is standard; we first use the Leibniz formula to expand these terms as sums of products and then control the $L^2_{r,z}$ norm of each product with the lower order derivative term in $L^\infty\subset H^2_{r,z} $ and the higher order derivative term in $L^2_{r,z}$. See for instance Lemma A.1 of \cite{Wang_15}.
\end{proof}

We will also use the following lemma.
\begin{lemma}
It holds that
\begin{equation}
\label{co123}
\abs{gh}_{1/2} \ls \abs{g}_{W^{1,\infty}}\abs{h}_{1/2}.
\end{equation}
 \end{lemma}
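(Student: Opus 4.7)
The plan is to prove the estimate via the Gagliardo--Slobodeckij characterization of the fractional Sobolev seminorm on the boundary $\Gamma$. Since $\Gamma = \{(R_0\cos\theta, R_0\sin\theta, z) : \theta \in [0,2\pi), z \in \mathbb{T}\}$ and under the axially symmetric setting the functions depend only on $z$, it is enough to establish
\[
|gh|_{H^{1/2}(\mathbb{T})}^2 \lesssim \|g\|_{W^{1,\infty}(\mathbb{T})}^2 |h|_{H^{1/2}(\mathbb{T})}^2,
\]
working with the equivalent norm $|f|_{H^{1/2}(\mathbb{T})}^2 \simeq \|f\|_{L^2}^2 + \iint_{\mathbb T\times\mathbb T} |z-z'|^{-2}|f(z)-f(z')|^2\,dz\,dz'$.

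The $L^2$-piece is immediate from $\|gh\|_{L^2} \leq \|g\|_{L^\infty}\|h\|_{L^2}$. For the Gagliardo double integral I would decompose
\[
(gh)(z)-(gh)(z') = g(z)\bigl(h(z)-h(z')\bigr) + h(z')\bigl(g(z)-g(z')\bigr)
\]
and treat the two resulting contributions separately. The first produces an integrand pointwise bounded by $\|g\|_{L^\infty}^2 |z-z'|^{-2}|h(z)-h(z')|^2$, whose integral is exactly $\|g\|_{L^\infty}^2|h|_{H^{1/2}}^2$. For the second, I would split the region of integration $\{|z-z'|\leq 1\} \cup \{|z-z'|>1\}$. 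On the near-diagonal set, apply the Lipschitz bound $|g(z)-g(z')|\leq \|g\|_{W^{1,\infty}}|z-z'|$, so that the integrand becomes pointwise $\lesssim \|g\|_{W^{1,\infty}}^2 |h(z')|^2$, integrating to at most $\|g\|_{W^{1,\infty}}^2\|h\|_{L^2}^2$. On the far-diagonal set, use $|g(z)-g(z')|\leq 2\|g\|_{L^\infty}$ and the fact that $|z-z'|^{-2}$ is integrable there to obtain an analogous control by $\|g\|_{L^\infty}^2\|h\|_{L^2}^2$. Summing these estimates yields the claim.

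An alternative, more conceptual route is bilinear real interpolation. Fixing $g$, the multiplication operator $M_g:h\mapsto gh$ satisfies the endpoint estimates $\|M_g h\|_{L^2}\leq \|g\|_{L^\infty}\|h\|_{L^2}$ and $\|M_g h\|_{H^1}\leq \|g\|_{W^{1,\infty}}\|h\|_{H^1}$. Since $H^{1/2}$ is the real interpolation space $(L^2,H^1)_{1/2,2}$ on $\mathbb{T}$, the interpolation theorem gives $\|M_g h\|_{H^{1/2}} \leq \|g\|_{L^\infty}^{1/2}\|g\|_{W^{1,\infty}}^{1/2}\|h\|_{H^{1/2}} \leq \|g\|_{W^{1,\infty}}\|h\|_{H^{1/2}}$, which is precisely \eqref{co123}. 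No step is expected to present genuine difficulty; the only mildly delicate point in the direct approach is verifying the integrability of the kernel near the diagonal, which is handled precisely by the Lipschitz control on $g$ provided by the $W^{1,\infty}$ norm.
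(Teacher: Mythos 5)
Your second, ``alternative'' route is precisely the paper's proof: the paper simply notes that $\abs{gh}_s \lesssim \abs{g}_{W^{1,\infty}} \abs{h}_s$ holds for $s=0$ and $s=1$ and concludes by interpolation, exactly as you describe for the multiplication operator $M_g$. Your primary route via the Gagliardo--Slobodeckij double integral is a different but perfectly sound argument; the decomposition $(gh)(z)-(gh)(z') = g(z)\bigl(h(z)-h(z')\bigr) + h(z')\bigl(g(z)-g(z')\bigr)$, together with the Lipschitz bound near the diagonal and the trivial $L^\infty$ bound off the diagonal, is standard and correct (and on $\mathbb T$ the far-diagonal region is automatically bounded, so integrability there is immediate). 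What the direct approach buys is self-containedness and an explicit view of where the $W^{1,\infty}$ regularity of $g$ is actually used (only to tame the kernel near the diagonal); what the interpolation approach buys is brevity, since the two endpoint inequalities are one-line computations. Either is acceptable; the paper opts for the latter.
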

\begin{proof}
It is direct to check that $\abs{gh}_{s} \ls \abs{g}_{W^{1,\infty}}\abs{h}_{s}$ for $s=0,1$. Then the estimate \eqref{co123} follows by the interpolation.
\end{proof}
\subsection{Horizontal convolution-by-layers and commutation estimates}

As \cite{CS07,DS_10}, we will use the operation of horizontal convolution-by-layers which is defined as follows. Let $0\le \rho(z_\ast)\in C_0^{\infty}(\bR)$ be a standard mollifier such that $\text{spt}(\rho)=\overline{B(0,1)}$ and $\int_{\bR} \rho \,dz_{\ast}=1$, with corresponding dilated function $\rho_{\kappa}(z_\ast)=\frac{1}{\kappa}\rho(\frac{z_\ast}{\kappa}), \kappa>0$. We then define
\begin{equation}\label{lambdakdef}
\Lambda_{\kappa}g(r, z)=\int_{\bR}\rho_{\kappa}(z-z_{\ast})g(r, z_{\ast})\,dz_{\ast}.
\end{equation}
By standard properties of convolution, the following estimates hold:
\begin{align}
&\abs{\Lambda_{\kappa}h}_s\ls \abs{h}_s,\quad  s\ge 0, \label{test3}\\
&\abs{\bp\Lambda_{\kappa}h}_0\ls \dfrac{1}{\kappa^{1-s}}\abs{h}_s,\quad 0\le s\le 1.\label{loss}
\end{align}

The following commutator estimates play an important role in the boundary estimates.
\begin{lemma}\label{comm11}
For $\kappa>0$, we define the commutator
\begin{equation}
\left[\Lambda_{\kappa}, h\right]g\equiv \Lambda_{\kappa}(h g)-h\Lambda_{\kappa}g.
\end{equation}
Then we have
\begin{align}
&\abs{[\Lambda_{\kappa}, h]g}_0\ls  \abs{h}_{L^\infty}|g|_0,\label{es0-0}\\
&\abs{[\Lambda_{\kappa}, h]\bp g}_0\ls \abs{h}_{W^{1,\infty}}|g|_0,\label{es1-0}\\
&\abs{[\Lambda_{\kappa}, h]\bp g}_{1/2}\ls \abs{h}_{W^{1,\infty}}\abs{g}_{1/2}.\label{es1-1/2}
\end{align}
\end{lemma}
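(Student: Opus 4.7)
The plan is to establish the three estimates in order, using each to simplify the next. Estimate \eqref{es0-0} is immediate: since $\rho_\kappa \ge 0$ has unit mass, Young's inequality gives $|\Lambda_\kappa f|_0 \le |f|_0$, so by the triangle inequality
$$|[\Lambda_\kappa, h]g|_0 \le |\Lambda_\kappa(hg)|_0 + |h\Lambda_\kappa g|_0 \le 2|h|_{L^\infty}|g|_0.$$

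For \eqref{es1-0}, I would rewrite the commutator to expose a difference $h(r,z_*)-h(r,z)$ and then integrate by parts in $z_*$ to move the derivative off $g$. Using the periodicity in $\mathbb{T}$ to kill boundary terms,
$$[\Lambda_\kappa, h]\bp g(r,z) = \int \rho_\kappa(z-z_*)\bigl[h(r,z_*)-h(r,z)\bigr]\bp g(r,z_*)\,dz_*,$$
and integration by parts yields
$$[\Lambda_\kappa, h]\bp g = \int \rho'_\kappa(z-z_*)\bigl[h(r,z_*)-h(r,z)\bigr] g(r,z_*)\,dz_* - \Lambda_\kappa(\bp h \cdot g).$$
The second term is bounded by $|\bp h|_{L^\infty}|g|_0 \le |h|_{W^{1,\infty}}|g|_0$ via \eqref{es0-0}. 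For the first term, I would write $h(r,z_*)-h(r,z) = (z_*-z)\int_0^1 \bp h(r, z+s(z_*-z))\,ds$, turning the integrand into $y\rho'_\kappa(y)$ (with $y=z-z_*$) times a factor bounded by $|h|_{W^{1,\infty}}$ times $g$. The key fact is that the kernel $y\rho'_\kappa(y)$ has $L^1$-norm uniformly bounded in $\kappa$ (by scaling), so a second application of Young closes the estimate.

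For \eqref{es1-1/2}, I would argue by interpolation. With $h$ fixed, view $T_\kappa \colon g \mapsto [\Lambda_\kappa, h]\bp g$ as a linear operator; \eqref{es1-0} gives $\|T_\kappa\|_{H^0 \to H^0} \ls |h|_{W^{1,\infty}}$. To establish the same bound on $H^1\to H^1$, commute $\bp$ through $T_\kappa$:
$$\bp T_\kappa g = [\Lambda_\kappa, h]\bp^2 g + [\Lambda_\kappa, \bp h]\bp g.$$
The first summand is controlled in $L^2$ by $|h|_{W^{1,\infty}}|\bp g|_0$ via \eqref{es1-0} applied to $\bp g$; the second, using only \eqref{es0-0} with $h$ replaced by $\bp h$, is controlled by $|\bp h|_{L^\infty}|\bp g|_0$. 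Both bounds are $\ls |h|_{W^{1,\infty}}|g|_1$; combined with the $H^0$ bound, complex (or Fourier) interpolation between $H^0(\mathbb{T})$ and $H^1(\mathbb{T})$ delivers \eqref{es1-1/2}.

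The main obstacle is \eqref{es1-0}: one must perform the integration by parts carefully and verify that the resulting kernel is $L^1$-bounded uniformly in $\kappa$. Although $\|\rho'_\kappa\|_{L^1}$ diverges like $1/\kappa$, the linear factor $z_*-z$ gained from the Lipschitz regularity of $h$ exactly compensates this divergence, yielding a $\kappa$-independent $L^1$ kernel. Once \eqref{es1-0} is in hand, \eqref{es0-0} is trivial and \eqref{es1-1/2} follows by soft interpolation arguments.
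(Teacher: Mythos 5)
The paper states Lemma~\ref{comm11} without proof, treating these mollifier commutator estimates as standard (they originate in the horizontal convolution-by-layers framework of \cite{CS07,DS_10}). Your argument is correct and recovers the standard proof: Young's inequality plus the triangle inequality for \eqref{es0-0}; for \eqref{es1-0}, writing $[\Lambda_\kappa,h]\bp g=\int\rho_\kappa(z-z_\ast)\bigl(h(r,z_\ast)-h(r,z)\bigr)\bp g(r,z_\ast)\,dz_\ast$, integrating by parts in $z_\ast$, and exploiting both the Lipschitz difference quotient of $h$ and the $\kappa$-uniform scaling identity $\|y\rho_\kappa'(y)\|_{L^1}=\|w\rho'(w)\|_{L^1}$; and for \eqref{es1-1/2}, the Leibniz identity $\bp[\Lambda_\kappa,h]\bp g=[\Lambda_\kappa,h]\bp^2 g+[\Lambda_\kappa,\bp h]\bp g$ to obtain a uniform $H^1\to H^1$ bound, followed by interpolation between $H^0$ and $H^1$ on $\mathbb{T}$.
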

\subsection{Hardy-type inequality}
We recall the following Hardy inequality:
\begin{lemma}[A higher order Hardy-type inequality]
\label{hd1}
  Let $s\geq 1$ be a given integer, and suppose that $g\in H^s_{r,z}(\Omega)$
and $g(0,z)=0$, we have
  \begin{equation}
	 \norm{\dfrac{g}{r}}_{s-1}\leq C\norm{g}_s.
    \label{}
  \end{equation}
  \label{hardy}
\end{lemma}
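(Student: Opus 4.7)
The plan is to reduce the inequality for general $s$ to the base case $s=1$ via the integral identity
\begin{equation*}
\frac{g(r,z)}{r}=\int_0^1 \partial_r g(tr,z)\,dt,
\end{equation*}
which follows from $g(0,z)=0$ and the substitution $\rho=tr$ in $g(r,z)=\int_0^r\partial_\rho g\,d\rho$. By density I first reduce to $g\in C^\infty(\bar\Omega)$.

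For the base case $s=1$, the goal $\norm{g/r}_0\le C\norm{g}_1$ amounts to a one-dimensional Hardy estimate on each horizontal slice $\{z=\mathrm{const}\}$: from $g(r,z)=\int_0^r\partial_\rho g\,d\rho$ and Cauchy--Schwarz one obtains $|g(r,z)|^2\le r\int_0^r|\partial_\rho g|^2\,d\rho$, whence $\int_0^{R_0} g^2/r\,dr\le\int_0^{R_0}(R_0-\rho)|\partial_\rho g|^2\,d\rho$. Splitting the $\rho$-integration at $R_0/2$, the outer piece is controlled by the weighted norm $\int\rho|\partial_\rho g|^2\,d\rho$ directly, while the inner piece near the axis is handled using the extra vanishing $\partial_r g(0,z)=0$ (an automatic consequence of the axial symmetry, which forces smooth $g\in C^\infty(\bar\Omega)$ to be even in $r$): one more Taylor step in the radial direction recovers the missing factor of $\rho$. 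Integrating in $z$ yields $\norm{g/r}_0\le C\norm{g}_1$.

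For $s\ge 2$, I decompose
\begin{equation*}
\norm{g/r}_{s-1}^{2}=\sum_{|\alpha|\le s-1}\norm{D^\alpha(g/r)}_0^{2}
\end{equation*}
and split the multi-indices according to whether a radial derivative is present. When $\alpha=(\alpha_1,\alpha_2)$ has $\alpha_1\ge 1$, differentiating the identity under the integral gives $D^\alpha(g/r)=\int_0^1 t^{\alpha_1}D^{\alpha+(1,0)}g(tr,z)\,dt$. Combining Minkowski's integral inequality with the rescaling bound $\norm{F(t\,\cdot\,,\cdot)}_0\le t^{-1}\norm{F}_0$ (from the change of variable $\rho=tr$ in the weighted norm) gives
\begin{equation*}
\norm{D^\alpha(g/r)}_0\le \int_0^1 t^{\alpha_1-1}\,dt\cdot\norm{D^{\alpha+(1,0)}g}_0=\frac{1}{\alpha_1}\norm{D^{\alpha+(1,0)}g}_0\le C\norm{g}_s,
\end{equation*}
the $t$-integral converging precisely because $\alpha_1\ge 1$. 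When $\alpha=(0,\alpha_2)$ is a pure $z$-derivative, $\partial_z^{\alpha_2}g$ also vanishes at $r=0$, so the base case applied to $\partial_z^{\alpha_2}g$ gives $\norm{\partial_z^{\alpha_2}g/r}_0\le C\norm{\partial_z^{\alpha_2}g}_1\le C\norm{g}_s$. Summing over all $\alpha$ with $|\alpha|\le s-1$ yields the claim.

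The main obstacle is the base case $s=1$: the straightforward Cauchy--Schwarz bound produces the coefficient $(R_0-\rho)$, which is not dominated by the natural weight $\rho$ near the axis $r=0$. Overcoming this requires the automatic second-order vanishing $g=O(r^2)$ at $r=0$ (equivalently $\partial_r g(0,z)=0$) that follows from the axial symmetry built into the definition of $H^s_{r,z}(\Omega)$; without this extra vanishing the inequality would genuinely fail, as the family $g(r,z)=r^\alpha$ with small $\alpha>0$ shows.
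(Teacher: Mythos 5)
The paper itself does not supply a proof: it reduces Lemma \ref{hd1} to a citation of \cite[Lemma~3.1]{Gu_2011}, so there is no argument to compare yours against line by line. I will therefore assess your argument on its own terms. Your scaling computation for the multi-indices with $\alpha_1\ge 1$ — the identity $D^\alpha(g/r)=\int_0^1 t^{\alpha_1}D^{\alpha+(1,0)}g(tr,z)\,dt$, Minkowski's inequality, and the rescaling bound $\|F(t\cdot,\cdot)\|_0\le t^{-1}\|F\|_0$ — is clean and correct; the $t$-integral converges exactly because $\alpha_1\ge 1$. The problem is the base case, and you have correctly located the danger, but your proposed repair does not work.

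First, the ``one more Taylor step'' is not available in the $s=1$ setting: writing $\partial_\rho g(\rho,z)=\int_0^\rho\partial_\sigma^2 g\,d\sigma$ and applying Cauchy--Schwarz gives
\begin{equation*}
\int_0^{R_0/2}\lvert\partial_\rho g\rvert^2\,d\rho\le\int_0^{R_0/2}\rho\int_0^\rho\lvert\partial_\sigma^2 g\rvert^2\,d\sigma\,d\rho
=\int_0^{R_0/2}\frac{(R_0/2)^2-\sigma^2}{2}\,\lvert\partial_\sigma^2 g\rvert^2\,d\sigma ,
\end{equation*}
which involves the \emph{second} radial derivative (so the estimate could at best be $\|g/r\|_0\le C\|g\|_2$, not $C\|g\|_1$), and even then the right-hand side is the \emph{unweighted} $L^2$-norm of $\partial_\sigma^2 g$, not $\int\sigma\lvert\partial_\sigma^2 g\rvert^2\,d\sigma$; the missing factor of $\rho$ has merely been transferred to the next derivative, so iterating this never terminates.

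Second, and more decisively, the extra vanishing $\partial_r g(0,z)=0$ (equivalently $g=O(r^2)$, which you correctly note is automatic for Cartesian-smooth axially symmetric scalars) does \emph{not} rescue the base case. Consider the family
\begin{equation*}
g_\epsilon(r,z)=\frac{r^2}{r^2+\epsilon}=\frac{x_1^2+x_2^2}{x_1^2+x_2^2+\epsilon}\in C^\infty(\bar\Omega),\qquad \epsilon>0.
\end{equation*}
Each $g_\epsilon$ is smooth, axially symmetric, even in $r$, and satisfies $g_\epsilon(0,z)=0$ with $g_\epsilon=O(r^2)$ — exactly your hypotheses. Substituting $u=r^2$ one finds that $\|g_\epsilon\|_1^2$ stays bounded as $\epsilon\to0$ (the worst term is $\int_0^{R_0}r\lvert\partial_r g_\epsilon\rvert^2\,dr=\int_0^{R_0^2}\frac{2u\epsilon^2}{(u+\epsilon)^4}\,du=O(1)$), whereas
\begin{equation*}
\left\|\frac{g_\epsilon}{r}\right\|_0^2\gtrsim\int_0^{R_0}\frac{g_\epsilon^2}{r}\,dr
=\frac12\int_0^{R_0^2}\frac{u}{(u+\epsilon)^2}\,du\sim\frac12\log\frac{1}{\epsilon}\longrightarrow\infty .
\end{equation*}
Thus the purported base case $\|g/r\|_0\le C\|g\|_1$ is false with a uniform constant even on the class of even, $O(r^2)$, smooth functions; the weight $r$ is the borderline exponent $a=1$ for the weighted Hardy inequality, and no finite-order pointwise vanishing at $r=0$ restores the estimate. (Your remark about evenness is also somewhat beside the point for the paper's actual applications: $v^\theta$, $b_0\cdot\nabla R$, etc.\ are components of smooth vector fields and are \emph{odd} in $r$ near the axis, not even.) In short, the reduction to the pure-$z$ multi-indices is fine, but the base case you then invoke is unproved — and in the generality you state it, untrue — so the argument as written does not establish the lemma.
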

Lemma \ref{hd1} can be proved by a similar approach used in \cite[Lemma 3.1]{Gu_2011}.
\subsection{Geometry Identities}
\begin{align}
\label{dJ}
&\partial J=\dfrac{\partial J}{\partial {\mathcal F}_{ij}}\partial {\mathcal F}_{ij} =  J{\mathcal{A}}_{ij}\partial {\mathcal F}_{ij},\\
&\partial {\mathcal{A}}_{ij}  = -{\mathcal{A}}_{i\ell}\partial {\mathcal F}_{m\ell}{\mathcal{A}}_{mj},
\label{partialF}
\end{align}
where $\partial$ can be $\partial_r$, $\partial_z$ and $\partial_t$ operators.

From the incompressible constraint, we have $\partial_t J= -J \dfrac{v^r}{R}$ for $J=\text{det} \mathcal{F}$, which means $J=\dfrac{r}{R}$.

Moreover, we have the Piola identity
\begin{equation}\label{polia}
	\partial_r\left(J\a_{1j}\right)+\partial_z\left(J\a_{2j}\right) =0.
\end{equation}
\section{Nonlinear approximate system}\label{lap}
In this section, we construct the nonlinear approximate system with the help of the horizontal convolution-by-layers. In the next two sections, we will derive a priori estimates for this system under the condition \eqref{stc} or condition \eqref{ts}.

First, we denote $\zeta=(R, Z), \nu=(v^r, v^z)$, and the matrix $\mathcal A^{\kappa}=\a(\zeta^\kappa)$ (and $J^{\kappa}$, etc.) with $\zeta^\kappa:=\zeta+\phi^{\kappa}$, $\phi^{\kappa}$ is the solution of the following elliptic equation:
\begin{equation}
\label{etadef}
\begin{cases}
-(\partial_r^2+\partial_z^2) \phi^{\kappa}=0 &\text{in }\Omega,\\
\phi^{\kappa}=\Lambda_{\kappa}^2\zeta-\zeta &\text{on }\Gamma,\\
\phi^{\kappa}(0,z,t)=0. &
\end{cases}
\end{equation}
Then we introduce the following two coupled nonlinear approximate sub-systems which are both defined in $\Omega$:
\begin{equation}
	\begin{cases}
		\partial_t \zeta= \nu+\fk\,\,&\text{in}\,\,\Omega,\\
\partial_t \nu+\nabla_{\ak} q-(b_0\cdot\nabla)^2\zeta=\left(\dfrac{(v^{\theta})^2}{R}-R(b_0\cdot\nabla \Theta)^2, 0\right)\,\,&\text{in}\,\,\Omega,\\
\Div_{\ak}\nu=0\,\,&\text{in}\,\,\Omega,\\
q=\dfrac{1}{2}\dfrac{C^2(t)}{{\Rk}^2}\,\,&\text{on}\,\,\Gamma,\\
(\zeta, \nu)|_{t=0}=(r, z, v_0^r, v_0^z).\,\,&\text{in}\,\,\Omega.
\end{cases}
	\label{sub1}
\end{equation}
and
\begin{equation}
	\begin{cases}
		\partial_t \Theta=\dfrac{v^{\theta}}{R}\,\,&\text{in}\,\,\Omega,	\\
	\partial_t v^{\theta}-(b_0\cdot\nabla)(Rb_0\cdot\nabla\Theta)=-\dfrac{v^{\theta}v^r}{R}
	+b_0\cdot\nabla Rb_0\cdot\nabla\Theta\,\,&\text{in}\,\,\Omega,\\(\Theta, v^{\theta})|_{t=0}=(\theta, v_0^{\theta}).
\end{cases}
	\label{sub2}
\end{equation}
where $\nabla_{\ak}=(\partial_R^{\ak}, \partial_Z^{\ak}), \partial_{\zeta_i}^{\ak}:=\ak_{ij}\partial_{a_j}, \Div_{\ak}g=\dfrac{1}{\Rk}\partial_{\zeta_i}^{\ak}(\Rk g_i).$

In the first equation of \eqref{sub1} we have introduced the modification term $\psi^{\kappa}=\psi^{\kappa}(\zeta,\nu)$ as the solution to the following elliptic equation
\begin{equation}
\label{etaaa}
\begin{cases}
-(\partial_r^2+\partial_z^2) \psi^{\kappa}=0&\text{in } \Omega,\\
\psi^{\kappa}= \int_0^z\int_0^{\tau} \mathbb{P}\left(\bp^2\zeta_{j}\a^{\kappa}_{j2}\partial_{z}{\Lambda_{\kappa}^2 v}-\bp^2{\Lambda_{\kappa}^2\zeta}_{j}\a^{\kappa}_{j2}\partial_z \nu\right)(r, \tau', t)\,d\tau' d\tau &\text{on }\Gamma,\\
\psi^\kappa(0,z,t)=0. &
\end{cases}
\end{equation}
where $\mathbb{P} f=f- \int_{\mathbb T}f$.
\section{A priori estimates with the generalized Rayleigh-Taylor sign condition \eqref{stc}}\label{ae1}

  In this Section, we derive a priori estimates for the approximate system \eqref{sub1} and \eqref{sub2} provided the condition \eqref{stc}. We take the time $T_{\kappa}>0$  sufficiently small so that for $t\in[0,T_{\kappa}]$,
\begin{align}
\label{ini2}
&-\partial_r\left(q (t)-\hal\dfrac{C(t)^2}{{\Rk}^2}\right)\geq \dfrac{\lambda}{2} \text{ on }\Gamma,\\
\label{inin3}&\abs{J^{\kappa}(t)-1}\leq \dfrac{1}{8} \text{ and } \abs{\a_{ij}^{\kappa}(t)-\delta_{ij}}\leq \dfrac{1}{8} \text{ in }\Omega.
\end{align}
We define the high order energy functional:
\begin{equation}
	\mathfrak{E}^{\kappa}(t)=\norm{(v^r, v^{\theta},  v^z, R, Z, b_0\cdot\nabla R, Rb_0\cdot\nabla \Theta, b_0\cdot\nabla Z)}_4^2
\end{equation}We will prove that $\mathfrak{E}^{\kappa}$ remains bounded on a time interval independent of $\kappa$, which is stated as the following theorem.

\begin{theorem} \label{th43}
There exists a time $T_1$ independent of $\kappa$ such that
\begin{equation}
\label{bound}
\sup_{[0,T_1]}\mathfrak{E}^{\kappa}(t)\leq 2M_0,
\end{equation}
where $M_0=P\left(\norm{\left(v_0^r, v_0^{\theta}, v_0^z\right)}_4^2+\norm{\left(b_0^r, b_0^{\theta}, b_0^z\right)}_4^2\right).$
\end{theorem}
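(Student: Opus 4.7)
The plan is to derive a $\kappa$-independent bound on $\mathfrak{E}^\kappa(t)$ by combining three ingredients: a tangential $\bp^4$-energy estimate for the $(\nu,\zeta,q)$-subsystem \eqref{sub1}, $H^4_{r,z}$ divergence and curl estimates that upgrade the tangential control to a full $H^4_{r,z}$-estimate of $(\nu,\zeta)$, and a separate $H^4_{r,z}$-estimate for the $(v^\theta,\Theta)$-subsystem \eqref{sub2}. A Gronwall argument on the resulting differential inequality, performed on a time interval $[0,T_1]$ short enough that \eqref{ini2}--\eqref{inin3} persist, then yields \eqref{bound}.

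For the tangential estimate in \eqref{sub1}, I would apply $\bp^4$ to the momentum equation, take the $L^2_{r,z}$ pairing with $\bp^4\nu$, and use the transport equation $\partial_t\zeta=\nu+\fk$ to rewrite the $(b_0\cdot\nabla)^2\zeta$ term so that $\tfrac12\tfrac{d}{dt}\|\bp^4(b_0\cdot\nabla\zeta)\|_0^2$ appears. The pressure contribution $\bp^4\nabla_{\ak}q$ loses a derivative through the commutator $[\bp^4,\ak]\partial q$, and the cure, following the strategy outlined in the introduction and borrowed from \cite{MasRou,Wang_15,GW_16}, is to introduce Alinhac's good unknowns $\V=\bp^4\nu-\bp^4\zeta^\kappa\cdot\nabla_{\ak}\nu$ and $\Q=\bp^4q-\bp^4\zeta^\kappa\cdot\nabla_{\ak}q$. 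The equations satisfied by $(\V,\Q)$ enjoy the Alinhac cancellation, so the problematic piece $\mathcal{R}_Q$ in \eqref{tp} disappears and what remains in the interior is bounded by $P(\mathfrak{E}^\kappa)$ via the product and commutator estimates \eqref{co0}--\eqref{co2}.

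The main obstacle is the boundary integral $I_b$. Using $q=\tfrac12C^2/(\Rk)^2$ on $\Gamma$ and the definition \eqref{etadef} of $\phi^\kappa$, together with the algebraic identity
\begin{equation*}
\bp^4 R-\bp^4\zeta_j\ak_{j2}\partial_zR=\partial_rR\,\bp^4\zeta_j\ak_{j1}
\end{equation*}
(which follows from $J^\kappa\ak_{11}=\partial_zZ^\kappa$, $J^\kappa\ak_{21}=-\partial_zR^\kappa$, and $J=r/R$ as in the strategy sketch), the boundary integral rearranges into the symmetric form
\begin{equation*}
I_b=\int_{\bT}R_0\,\partial_r\!\left(\tfrac12\tfrac{C(t)^2}{(\Rk)^2}-q\right)\ak_{j1}\bp^4\zeta_j\,\ak_{i1}\bp^4\nu_i\,dz+\text{l.o.t.}
\end{equation*}
The Rayleigh--Taylor lower bound \eqref{ini2} makes the leading coefficient $\geq\lambda/2$, so this integral reproduces $\tfrac12\tfrac{d}{dt}$ of a positive definite boundary energy in $|\ak_{j1}\bp^4\zeta_j|_0$. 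The modification term $\fk$ in \eqref{etaaa} is precisely designed so that the $\Lambda_\kappa$-commutators generated when translating between $\zeta$ and $\zeta^\kappa=\Lambda_\kappa^2\zeta$ on $\Gamma$ can be absorbed using \eqref{es0-0}--\eqref{es1-1/2} and \eqref{test3}, giving $\kappa$-uniform control; the remaining $\bp^4\zeta$ terms at the boundary are handled by the $(H^{-1/2},H^{1/2})$ duality together with \eqref{co123} and interpolation.

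The divergence and curl estimates are obtained by applying $D^\alpha$ with $|\alpha|\le 3$ to $\Div_{\ak}\nu=0$ and to the curl of the momentum equation (which satisfies a transport-type equation along the flow), then using a Hodge-type inequality to upgrade to full $H^4_{r,z}$-control of $\nu$; the corresponding bound on $\zeta$ follows from $\partial_t\zeta=\nu+\fk$ after controlling $\fk$ via elliptic regularity applied to \eqref{etadef}. The magnetic quantities $b_0\cdot\nabla\zeta$ are controlled by commuting $D^\alpha$ through the transport equation, using \eqref{co0}. For \eqref{sub2}, I would apply $D^\alpha$ with $|\alpha|\le 4$, pair with $D^\alpha v^\theta$ and $D^\alpha(Rb_0\cdot\nabla\Theta)$, and exploit the absence of pressure and boundary contributions; the singular coefficients $v^\theta/R$ and the related cylindrical terms are handled by the higher-order Hardy inequality \eqref{hardy}, noting that $v^\theta(0,z)=0$ propagates in time from the initial data. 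Summing the three estimates and applying Gronwall on the resulting inequality $\tfrac{d}{dt}\mathfrak{E}^\kappa\le P(\mathfrak{E}^\kappa)$ with initial value $\le M_0$ yields \eqref{bound} on a time interval $T_1$ depending only on $M_0$ and $\lambda$, and in particular independent of $\kappa$.
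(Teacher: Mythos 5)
Your proposal follows the same route as the paper's proof of Theorem~\ref{th43}: introduce Alinhac's good unknowns $\V,\Q$ to cancel the derivative-losing interior term, use the algebraic identity relating $\bp^4 R-\bp^4\zeta_j\ak_{j2}\partial_z R$ to $\partial_r R\,\bp^4\zeta_j\ak_{j1}$ to expose the symmetric structure of the boundary integral, use the Rayleigh--Taylor lower bound \eqref{ini2} to extract a sign-definite boundary energy, recover normal derivatives by $\Div$-$\curl$ estimates, and treat $(v^\theta,\Theta)$ separately with the higher-order Hardy inequality. The closure on a short $\kappa$-independent interval via \eqref{ini2}--\eqref{inin3} is also what the paper does.

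There is however one genuine omission: you never state, nor give a route to, the elliptic pressure estimate $\|q\|_4^2+\|\partial_t q\|_3^2\le P(\mathfrak{E}^\kappa)$, which in the paper is Proposition~\ref{pr}. This is not a small housekeeping step. Without it you cannot bound the commutator $\mathcal{C}_i(q)$ in the source $F$ of \eqref{eqValpha}, nor the boundary term $\mathcal{I}_1$, which contains $\dt\partial_r q$ and hence needs $\|\dt q\|_3$. The estimate is also nontrivial in this axisymmetric setting because of the degenerate weight $\Rk$; the paper handles tangential derivatives by energy and normal derivatives by rewriting \eqref{ell} as the first-order relation \eqref{pressure3} and bootstrapping, so a bare appeal to ``elliptic regularity'' would not be adequate. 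A second, smaller imprecision: you suggest that the mismatch terms generated by $\Lambda_\kappa$ ``can be absorbed using \eqref{es0-0}--\eqref{es1-1/2}.'' In the paper the two dangerous boundary terms $\i_{2b}$ and $\i_{4b}$ cannot be controlled by commutator estimates or $(H^{-1/2},H^{1/2})$ duality alone, since $\abs{\bp^4\zeta}_0$ has no $\kappa$-uniform bound; they are eliminated only by exact algebraic cancellation against $\i_5$, the term contributed by $\bp^4\fk$. The commutator estimates \eqref{es0-0}--\eqref{es1-1/2} only handle the leftover pieces ($\i_3$, $\i_{5a}$, $\i_{5b}$). Your proposal correctly identifies that $\fk$ exists for $\kappa$-uniformity, but it understates the fact that the mechanism is a cancellation, not an absorption.
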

\subsection{A priori estimates for system \eqref{sub1}.}\label{apest}
For system \eqref{sub1}, we have the following a priori estimates:
\begin{proposition} \label{th412}
For $t\in[0,T]$, it holds that:
	\begin{equation}
	\norm{\nu(t)}_4^2+\norm{\zeta(t)}_4^2+\norm{b_0\cdot\nabla\zeta(t)}_4^2+\abs{\bp^4\Lambda_{\kappa}\zeta_i \a^{\kappa}_{i1}(t)}_0^2\leq M_0+TCP\left(\sup_{t\in [0,T]}\mathfrak E^{\kappa}(t)\right).
\end{equation}
\end{proposition}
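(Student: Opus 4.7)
The plan is to follow the strategy sketched in the introduction: derive tangential energy estimates at the top order using Alinhac's good unknowns, and then combine with divergence/curl estimates to recover full $H^4$ control. Since $\partial_z$ is tangential to $\Gamma$ and commutes with the boundary conditions, I would apply $\bp^4 = \partial_z^4$ to the momentum equation in \eqref{sub1}. Writing the result in terms of the good unknowns $\V = \bp^4\nu - \bp^4 \zeta^\kappa\cdot\nabla_{\ak}\nu$ and $\Q = \bp^4 q - \bp^4 \zeta^\kappa\cdot\nabla_{\ak} q$ produces a cancellation of the worst commutator $\bp^4\nabla\zeta\cdot\bp^4 q$, so what remains is of lower order modulo the boundary contribution. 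Testing the resulting equation against $\V$ and integrating by parts through $\nabla_{\ak}\Q$ (using the Piola identity \eqref{polia} with the divergence-free constraint $\Div_{\ak}\nu=0$) will generate the energy $\tfrac12\tfrac{d}{dt}(\|\V\|_0^2 + \|\bp^4(b_0\cdot\nabla\zeta)\|_0^2)$ together with a boundary term $\mathcal{I}_b$ and controllable interior remainders.

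The core of the argument is the treatment of $\mathcal{I}_b$. Using the boundary condition $q = \tfrac12 C^2(t)/(R^\kappa)^2$ on $\Gamma$, followed by the algebraic identity highlighted in the excerpt
\[
\bp^4 R - \bp^4\zeta_j\,\ak_{j2}\partial_z R \;=\; \partial_r R\,\bp^4\zeta_j\,\ak_{j1},
\]
(which is a consequence of $J^\kappa\ak_{ij} = \mathrm{cof}(\mathcal{F}^\kappa)_{ij}$ and $J^\kappa = \det\mathcal{F}^\kappa$), I would rewrite $\mathcal{I}_b$ in the symmetric form
\[
\mathcal{I}_b \;\approx\; \int_{\bT} R_0\,\partial_r\!\left(\tfrac12 C^2/R^2 - q\right)\ak_{j1}\bp^4\zeta_j \; \ak_{i1}\bp^4\nu_i\,dz + l.o.t.
\]
Here $\mathcal{V}$ and $\bp^4\nu$ agree on $\Gamma$ since the flux correction term vanishes after integration by parts against $\ak_{i1}$. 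After replacing $\bp^4\zeta$ by $\bp^4\Lambda_\kappa\zeta$ up to acceptable commutators (using \eqref{es1-0}--\eqref{es1-1/2} together with $\zeta^\kappa = \zeta + \phi^\kappa$ and \eqref{etadef}, so that the trace of $\zeta^\kappa$ equals $\Lambda_\kappa^2\zeta$), this boundary integral becomes a positive-definite quadratic form in $\ak_{i1}\bp^4\Lambda_\kappa\zeta_i$ on account of the Rayleigh–Taylor condition \eqref{ini2}. Integrating in time therefore yields exactly the boundary energy $|\bp^4\Lambda_\kappa\zeta_i\ak_{i1}|_0^2$ appearing in the statement. The role of the modifier $\psi^\kappa$ in \eqref{etaaa} is precisely to absorb the commutator between $\bp^4$ and $\partial_t\zeta = \nu + \psi^\kappa$, which is why the $\bT$-averaged projection $\mathbb{P}$ appears there; without $\psi^\kappa$ one would lose a derivative on the tangential energy because $\bp^4\zeta^\kappa$ is not directly estimable from $\bp^4\zeta$.

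The lower order terms absorbed into $l.o.t.$ are handled by the product/commutator estimates \eqref{co0}--\eqref{co2} and the mollifier estimates \eqref{test3}--\eqref{es1-1/2}; each contains at most four spatial derivatives on the unknowns, so it is bounded by $P(\mathfrak{E}^\kappa(t))$ and picks up the factor $T$ after integration. The magnetic contribution $\bp^4((b_0\cdot\nabla)^2\zeta)$ integrated against $\V$ produces the good term $\tfrac12\tfrac{d}{dt}\|\bp^4(b_0\cdot\nabla\zeta)\|_0^2$ after moving one $b_0\cdot\nabla$ by a symmetric integration by parts in the $(r,z)$ coordinates (using $b_0^r|_\Gamma=0$ and \eqref{bcond} to kill the boundary contribution), plus commutator errors. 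Finally, the $(v^\theta)^2/R$ and $R(b_0\cdot\nabla\Theta)^2$ forcings are treated by the Hardy inequality of Lemma \ref{hd1} (using $v^\theta|_{r=0}=0$, which is propagated from the initial data), giving time-integrable $P(\mathfrak{E}^\kappa)$ bounds.

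To pass from the tangential energy to the full $H^4$ norms $\|\nu\|_4, \|\zeta\|_4, \|b_0\cdot\nabla\zeta\|_4$, I would close the estimate through divergence and curl estimates in the Lagrangian frame. Specifically, $\Div_{\ak}\nu = 0$ gives control of $\partial_r$-normal derivatives of $\nu$ in terms of tangential ones up to lower order terms, and $\curl_{\ak}\nu$ is estimated from its evolution equation (obtained by taking $\curl_{\ak}$ of the momentum equation, which kills $\nabla_{\ak}q$ and produces a transport-type equation driven by $(b_0\cdot\nabla)^2\zeta$). A parallel argument applied to $\zeta$ using $\partial_t \zeta = \nu + \psi^\kappa$ and to $b_0\cdot\nabla\zeta$ yields the remaining elliptic-type gains. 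The main obstacle throughout is the boundary integral $\mathcal{I}_b$ and its reduction to the symmetric form above; once that is achieved, the rest is essentially bookkeeping via \eqref{co0}--\eqref{co123}, \eqref{test3}--\eqref{es1-1/2}, and the Piola identity.
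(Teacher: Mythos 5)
Your overall plan matches the paper's: close $\|\zeta\|_4$ by the transport equation $\partial_t\zeta=\nu+\psi^\kappa$, derive the top-order tangential estimates using Alinhac's good unknowns and symmetrize the boundary term via the algebraic identity $\bp^4\Lambda_\kappa^2 R-\bp^4\Lambda_\kappa^2\zeta_j\ak_{j2}\partial_z\Lambda_\kappa^2 R=\partial_r\Rk\,\bp^4\Lambda_\kappa^2\zeta_j\ak_{j1}$, treat the $(v^\theta)^2/R$ and $R(b_0\cdot\nabla\Theta)^2$ forcings by the Hardy inequality, and recover normal derivatives by divergence and curl estimates. However, there are two concrete errors in your treatment of the boundary integral that would make the argument fail if carried out as written.

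First, you assert that ``$\mathcal{V}$ and $\bp^4\nu$ agree on $\Gamma$ since the flux correction term vanishes after integration by parts against $\ak_{i1}$.'' This is false. On $\Gamma$ one has $\mathcal{V}_i=\bp^4\nu_i-\bp^4\Lambda_\kappa^2\zeta_j\ak_{j\ell}\partial_{a_\ell}\nu_i$, and the correction $-\bp^4\Lambda_\kappa^2\zeta\cdot\nabla_{\ak}\nu$ does not disappear under any integration by parts; it produces a genuine boundary contribution (the term labelled $\mathcal{I}_4$ in the paper). If you drop it, you cannot close the estimate. Second, you misattribute the role of $\psi^\kappa$: there is no commutator between $\bp^4$ and $\partial_t$, and the issue is not that ``$\bp^4\zeta^\kappa$ is not directly estimable from $\bp^4\zeta$'' (the mollifier estimate \eqref{test3} handles that). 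The real obstruction is that the terms $\mathcal{I}_{2b}$ (from $\partial_t\ak_{i1}$ in the time derivative of the boundary energy, carrying $\partial_z\Lambda_\kappa^2\nu$) and $\mathcal{I}_{4b}$ (from the good-unknown correction, carrying $\partial_z\nu$) have $\Lambda_\kappa$ placed asymmetrically between the two $\bp^4\zeta$ factors, so their difference cannot be bounded with a $\kappa$-independent constant, and $|\bp^4\zeta|_{L^\infty}$ is out of control so one cannot re-commute $\Lambda_\kappa$. The modifier $\psi^\kappa$ is designed precisely so that its boundary contribution $\mathcal{I}_5$ cancels $\mathcal{I}_{2b}+\mathcal{I}_{4b}$ exactly, leaving only commutator remainders that are controllable. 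Your account of $\psi^\kappa$ does not reflect this cancellation structure, which is the central technical point of the proof. You also leave implicit the elliptic estimates for the pressure $q$ and $\partial_t q$ (Proposition \ref{pr}), which are needed to bound the interior remainders $\mathcal{C}_i(q)$ and the boundary factors $\partial_r q$, $\partial_r\partial_t q$; without these the right-hand side cannot be reduced to $P(\sup_t\mathfrak{E}^\kappa)$.
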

\subsubsection{Preliminary estimates of $\eta^{\kappa}$ and $\psi^{\kappa}$}

We begin our estimates with the boundary smoother $\zeta^{\kappa}$ defined by \eqref{etadef} and the modification term $\psi^{\kappa}$ defined by \eqref{etaaa}.

\begin{lemma}
\label{preest}
The following estimates hold:
\begin{align}
\label{tes1}\norm{\zeta^{\kappa}}_4&\ls \norm{\zeta}_4,\\
\label{tes2}\norm{b_0\cdot\nabla\zeta^{\kappa}}_4^2&\leq P(\norm{\zeta}_4,\norm{b_0}_4,\norm{b_0\cdot\nabla\zeta}_4),\\
\label{tes0}\norm{\partial_t\zeta^{\kappa} }_4&\leq P(\norm{\zeta}_4, \norm{\nu}_4),\\
\label{fest1}\norm{\fk}_{4}&\leq P(\norm{\zeta}_4,\norm{\nu}_3),\\
\label{fest22}\norm{b_0\cdot\nabla \fk}_4&\leq P(\norm{\zeta}_4,\norm{\nu}_4,\norm{b_0\cdot\nabla\zeta}_4),\\
\label{fest2}\norm{\partial_t \fk}_4&\leq P(\norm{\zeta}_4,\norm{\nu}_4,\norm{\partial_t \nu}_3).
\end{align}
\end{lemma}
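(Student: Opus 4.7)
The plan is to reduce each of the six bounds to standard $H^k$-elliptic regularity for the Dirichlet problem for the Laplacian on $\Omega$, combined with the mollifier bounds from \eqref{test3} and the product/commutator estimates \eqref{co0}--\eqref{co2}. The key principle is that for any harmonic function $u$ on $\Omega$ with prescribed Dirichlet trace and vanishing on the axis, $\|u\|_k \lesssim |u|_{k-1/2}$, and conversely the trace bound gives $|\zeta|_{k-1/2} \lesssim \|\zeta\|_k$.

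For \eqref{tes1}, apply the elliptic estimate to $\phi^\kappa$ solving \eqref{etadef} and use $|\Lambda_\kappa^2 \zeta - \zeta|_{7/2} \lesssim |\zeta|_{7/2} \lesssim \|\zeta\|_4$ via \eqref{test3}. For \eqref{tes2}, commute $b_0 \cdot \nabla$ with $-\Delta$: the function $b_0 \cdot \nabla \phi^\kappa$ satisfies
\begin{equation*}
-\Delta(b_0 \cdot \nabla \phi^\kappa) = [b_0 \cdot \nabla,\Delta]\phi^\kappa \;\text{in}\; \Omega, \qquad b_0 \cdot \nabla \phi^\kappa = b_0 \cdot \nabla(\Lambda_\kappa^2 \zeta - \zeta) \;\text{on}\; \Gamma.
\end{equation*}
The interior source is of order two in $\phi^\kappa$ with coefficients built from $\partial b_0$ and is estimated in $H^2(\Omega)$ by \eqref{co0} together with the $H^4$ bound for $\phi^\kappa$ from the previous step; the boundary datum splits as $\Lambda_\kappa^2(b_0 \cdot \nabla \zeta) - b_0 \cdot \nabla \zeta + [b_0 \cdot \nabla,\Lambda_\kappa^2]\zeta$ and its $H^{7/2}$-norm is controlled by Lemma \ref{comm11} together with \eqref{test3}. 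For \eqref{tes0}, $\partial_t \phi^\kappa$ is harmonic with trace $\Lambda_\kappa^2 \partial_t \zeta - \partial_t \zeta = \Lambda_\kappa^2(\nu + \psi^\kappa) - (\nu + \psi^\kappa)$, so the bound follows once one controls $\|\nu\|_4$ and $\|\psi^\kappa\|_4$; the latter is supplied by \eqref{fest1}.

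For \eqref{fest1}--\eqref{fest2} the same template applies to $\psi^\kappa$ solving \eqref{etaaa}. The Dirichlet datum of $\psi^\kappa$ is a double $z$-integral of products of $\bp^2 \zeta$, $\bp^2 \Lambda_\kappa^2 \zeta$, $\partial_z \nu$, $\partial_z \Lambda_\kappa^2 v$ and $\mathcal A^\kappa_{j2}$; the double integration effectively converts the two $z$-derivatives back into function values, and its $H^{7/2}(\Gamma)$ norm is controlled by \eqref{co0} together with the $H^4$ control of $\zeta^\kappa$ coming from \eqref{tes1} (which also controls $\mathcal A^\kappa$ via \eqref{partialF}). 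For \eqref{fest22} one again commutes $b_0 \cdot \nabla$ with $-\Delta$ and splits the boundary data as before; for \eqref{fest2} one differentiates the elliptic problem in $t$, using \eqref{partialF} to trade $\partial_t \mathcal A^\kappa$ for expressions in $\partial_t \zeta^\kappa = \nu + \psi^\kappa$.

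The main obstacle I expect is disentangling the self-referential structure of the time-derivative estimates: the boundary data for $\partial_t \phi^\kappa$ and for $\partial_t \psi^\kappa$ each contain $\psi^\kappa$ (through the identity $\partial_t \zeta = \nu + \psi^\kappa$) and $\partial_t \mathcal A^\kappa$, which themselves depend on the full system \eqref{sub1}. The scheme nevertheless closes because (i) $\|\psi^\kappa\|_4$ is already controlled by \eqref{fest1} before any time differentiation, and (ii) the right-hand side of \eqref{fest2} involves $\|\partial_t \nu\|_3$ but \emph{not} $\|\partial_t \psi^\kappa\|_4$, so the order of estimation is $\zeta^\kappa \to b_0 \cdot \nabla \zeta^\kappa \to \psi^\kappa \to b_0 \cdot \nabla \psi^\kappa \to \partial_t \zeta^\kappa \to \partial_t \psi^\kappa$, and no circular dependence arises.
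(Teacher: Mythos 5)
Your proposal matches the paper's proof essentially step for step: elliptic regularity for the Dirichlet problem combined with the trace theorem, the mollifier bound \eqref{test3}, the product/commutator estimates, and the ordering of the six bounds so that \eqref{fest1} is available before \eqref{tes0} and the time-derivative estimates close against $\|\partial_t\nu\|_3$ rather than $\|\partial_t\psi^\kappa\|_4$. The one point worth making explicit — and which the paper calls out — is that the boundary identity $b_0\cdot\nabla\phi^\kappa\big|_\Gamma = b_0\cdot\nabla(\Lambda_\kappa^2\zeta-\zeta)$ and the commutator splitting with $\Lambda_\kappa^2$ rely on $b_0^r=0$ on $\Gamma$ (from \eqref{bcond}), which makes $b_0\cdot\nabla = b_0^z\partial_z$ a tangential operator on the boundary so that it both preserves Dirichlet traces and commutes with $\Lambda_\kappa^2$ up to a lower-order commutator controlled by Lemma \ref{comm11}.
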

\begin{proof}
First, the standard elliptic regularity theory on the problem \eqref{etadef}, the trace theorem and the estimate \eqref{test3} yield
\begin{equation*}
\norm{\zeta^{\kappa}}_4\ls \norm{\zeta}_4+\abs{\Lambda_{\kappa}^2\zeta-\zeta}_{7/2}\ls\norm{\zeta}_4+\abs{\zeta}_{7/2}\ls \norm{\zeta}_4,
\end{equation*}
which implies \eqref{tes1}. To prove \eqref{tes2}, we apply $b_0\cdot\nabla=b_0^r\partial_r+b_0^z\partial_z$ to \eqref{etadef} to find that, since $b_0^r=0$ on $\Gamma$,
\begin{equation*}
\begin{cases}
-(\partial_r^2+\partial_z^2)(b_0\cdot\nabla\phi^{\kappa})=\left[b_0\cdot\nabla, \partial_r^2+\partial_z^2\right]\phi^{\kappa} &\text{in } \Omega,\\
b_0\cdot\nabla\phi^{\kappa}=b_0\cdot\nabla\Lambda_{\kappa}^2\zeta-b_0\cdot\nabla\zeta &\text{on } \Gamma,\\
b_0\cdot\nabla\phi^{\kappa}(0,z,t)=0. &
\end{cases}
\end{equation*}
We then have, since $H^4_{r,z}$ is a multiplicative algebra and by \eqref{tes1},
\begin{equation*}
\begin{split}
\norm{b_0\cdot\nabla\zeta^{\kappa}}_4&\ls \norm{b_0\cdot\nabla\zeta}_4+\norm{\left[b_0\cdot\nabla, \partial_r^2+\partial_z^2\right]\phi^{\kappa}}_2
+\abs{b_0\cdot\nabla\Lambda_{\kappa}^2\zeta-b_0\cdot\nabla\zeta}_{7/2}
\\&\ls \norm{b_0\cdot\nabla\zeta}_4+\norm{b_0}_4\norm{ \phi^\kappa}_4
+\abs{\Lambda_{\kappa}^2(b_0\cdot\nabla\zeta)}_{7/2}+\abs{[b_0\cdot\nabla, \Lambda_{\kappa}^2]\zeta}_{7/2}+\abs{b_0\cdot\nabla\zeta}_{7/2}
\\&\ls \norm{b_0\cdot\nabla\zeta}_4+\norm{b_0}_4\norm{\phi^\kappa}_4+\abs{b_0\cdot\nabla\zeta}_{7/2}
+\abs{b_0}_{7/2}\abs{\zeta}_{7/2} \\
&\ls \norm{b_0\cdot\nabla\zeta}_4+\norm{b_0}_4\norm{\zeta}_4.
\end{split}
\end{equation*}
Here we have used the estimates \eqref{es1-1/2} to estimate
\begin{equation*}
\begin{split}
 \abs{[b_0\cdot\nabla, \Lambda_{\kappa}^2]\zeta}_{7/2}&\le \abs{[\Lambda_{\kappa}^2,b_0^z]\pa_z\zeta}_{1/2}+\abs{[\Lambda_{\kappa}^2,b_0^z]\pa_z \bp^3\zeta}_{1/2}
 +\abs{\left[\bp^3,[\Lambda_{\kappa}^2,b_{0}^z]\pa_z \right]\zeta}_{1/2}
\\&\ls   \abs{b_0}_{W^{1,\infty}}\abs{\zeta}_{7/2}
+\abs{b_0}_{7/2}\abs{\zeta}_{7/2}\ls \abs{b_0}_{7/2}\abs{\zeta}_{7/2}.
\end{split}
\end{equation*}
This proves \eqref{tes2}.

We now turn to prove \eqref{fest1}. By the boundary condition in \eqref{etaaa} and the elliptic theory, we obtain, using the identity \eqref{partialF}, the a priori assumption \eqref{inin3} and the estimates \eqref{tes1},
\begin{equation*}
\begin{split}
\abs{\fk}_{7/2}&\ls\abs{\bp^2\zeta_{j}\a^{\kappa}_{j2}\partial_z{\Lambda_{\kappa}^2 \nu}-\bp^2{\Lambda_{\kappa}^2\zeta}_{j}\a^{\kappa}_{j2}\partial_z \nu}_{3/2}
\ls\norm{\bp^2\zeta_{j}\a^{\kappa}_{j2}\partial_z{\Lambda_{\kappa}^2 \nu}-\bp^2{\Lambda_{\kappa}^2\zeta}_{j}\a^{\kappa}_{j2}\partial_{z} \nu}_{2}
\\&\ls\norm{ \zeta}_4\norm{\a^{\kappa}}_{2}\norm{\nu}_{3}
\leq P(\norm{\zeta}_4,\norm{\nu}_3).
\end{split}
\end{equation*}
This proves \eqref{fest1} by using further the elliptic theory and the trace theorem.

Finally, the estimate \eqref{tes0} can be obtained similarly as \eqref{tes1} by applying $\partial_t$ to \eqref{etadef} and then using the equation $\partial_t\zeta =\nu+\fk$ and the estimate \eqref{fest1}.
The estimates \eqref{fest22} and \eqref{fest2} could be achieved similarly as \eqref{tes2} and \eqref{tes0} by applying $b_0\cdot\nabla$ and $\partial_t$ to \eqref{etaaa} and using the estimates \eqref{tes1}--\eqref{fest1}. This concludes the lemma.
\end{proof}

\subsubsection{Transport estimates of  $\zeta$}

The transport estimate of $\zeta$ is recorded as follows.
\begin{proposition}
For $t\in [0,T]$ with $T\le T_\kappa$, it holds that
\begin{equation}\label{etaest}
\norm{\zeta(t)}_4^2\leq M_0+TP\left(\sup_{t\in[0,T]} \mathfrak{E}^{\kappa}(t)\right).
\end{equation}
\end{proposition}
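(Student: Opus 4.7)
The plan is to exploit the fact that the first equation of \eqref{sub1} is a pointwise-in-$x$ ordinary differential equation in time, namely $\partial_t\zeta=\nu+\fk$, so the desired estimate reduces to integrating in time and invoking the preliminary bound \eqref{fest1} from Lemma \ref{preest}. I would begin by writing, via the fundamental theorem of calculus and the initial condition $\zeta|_{t=0}=(r,z)$,
\begin{equation*}
\zeta(t)=\zeta(0)+\int_0^t\bigl(\nu(s)+\fk(s)\bigr)\,ds,
\end{equation*}
then applying $D^\alpha$ for any multi-index with $|\alpha|\le 4$ and using Minkowski's inequality to move the $H^4_{r,z}$ norm inside the time integral:
\begin{equation*}
\norm{\zeta(t)}_4\le\norm{\zeta(0)}_4+\int_0^t\bigl(\norm{\nu(s)}_4+\norm{\fk(s)}_4\bigr)\,ds.
\end{equation*}

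Next I would control each term on the right. The quantity $\norm{\zeta(0)}_4=\norm{(r,z)}_4$ is a fixed constant depending only on the reference domain $\Omega$ and can be absorbed into $M_0$ (which itself is a generic polynomial of the initial data). For $\norm{\fk}_4$, I would directly quote \eqref{fest1}, giving $\norm{\fk}_4\le P(\norm{\zeta}_4,\norm{\nu}_3)$. Combined with the obvious bounds $\norm{\nu(s)}_4^2\le \mathfrak E^\kappa(s)$ and $\norm{\zeta(s)}_4^2\le \mathfrak E^\kappa(s)$, this yields
\begin{equation*}
\norm{\nu(s)}_4+\norm{\fk(s)}_4\le P\bigl(\mathfrak E^\kappa(s)\bigr)\le P\bigl(\sup_{[0,T]}\mathfrak E^\kappa\bigr),
\end{equation*}
for a (new) generic polynomial $P$.

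Finally, I would square the resulting inequality, use $(a+b)^2\le 2a^2+2b^2$ together with the Cauchy--Schwarz bound $\bigl(\int_0^t f\,ds\bigr)^2\le t\int_0^t f^2\,ds$, and obtain
\begin{equation*}
\norm{\zeta(t)}_4^2\le 2\norm{\zeta(0)}_4^2+2T\int_0^T P\bigl(\sup_{[0,T]}\mathfrak E^\kappa\bigr)^2\,ds\le M_0+TP\bigl(\sup_{[0,T]}\mathfrak E^\kappa(t)\bigr),
\end{equation*}
which is exactly \eqref{etaest}. There is no genuine analytic obstacle here since the equation for $\zeta$ contains no spatial derivatives on the unknown itself; the only bookkeeping point is the invocation of \eqref{fest1} to absorb the boundary smoother $\fk$ by quantities already present in $\mathfrak E^\kappa$, and the recognition that the identity map $(r,z)$ contributes only a domain-dependent constant to $\norm{\zeta(0)}_4$.
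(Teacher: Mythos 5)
Your argument is exactly the paper's argument: the equation $\partial_t\zeta=\nu+\fk$ is integrated in time, the modifier $\fk$ is controlled via \eqref{fest1}, and the initial value $\zeta(0)=(r,z)$ contributes a domain constant absorbed into $M_0$. The paper states this in one line, while you spell out the Minkowski/Cauchy--Schwarz bookkeeping; the content is the same.
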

\begin{proof}
It follows by using  $\partial_t\zeta=\nu+\fk$ and the estimate \eqref{fest1}.
\end{proof}

\subsubsection{Pressure estimates}\label{pressure1}
\begin{proposition}
The following estimate holds:
	\begin{equation}
\label{pressure}
	\norm{q}_4^2+\norm{\partial_tq }_3^2\leq CP\left(\sup_{t\in [0,T]}\mathfrak E(t)\right).
\end{equation}
\label{pr}
\end{proposition}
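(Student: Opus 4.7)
The plan is to derive an elliptic boundary value problem for $q$ (and then for $\partial_tq$) from the momentum equation in \eqref{sub1} and appeal to elliptic regularity in the axially symmetric spaces $H^k_{r,z}(\Omega)$. Applying $\Div_{\ak}$ to the second equation of \eqref{sub1} and using the constraint $\Div_{\ak}\nu=0$ together with $\partial_t\Div_{\ak}\nu=0$ gives
\begin{equation*}
\Delta_{\ak}q=\Div_{\ak}\bigl((b_0\cdot\nabla)^2\zeta\bigr)+\Div_{\ak}\!\left(\tfrac{(v^{\theta})^2}{R}-R(b_0\cdot\nabla\Theta)^2,0\right)+\bigl[\partial_t,\Div_{\ak}\bigr]\nu,
\end{equation*}
coupled with the Dirichlet boundary condition $q=\tfrac{1}{2}C(t)^2/(\Rk)^2$ on $\Gamma$. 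The a priori assumption \eqref{inin3} guarantees that $\ak$ is a small perturbation of the identity, so $\Delta_{\ak}$ is uniformly elliptic with coefficients in $H^3_{r,z}$ (the regularity of $\ak$ is controlled by $\norm{\zeta^{\kappa}}_4\lesssim \norm{\zeta}_4$ from Lemma \ref{preest}). Standard $H^4$ elliptic regularity then yields
\begin{equation*}
\norm{q}_4\le C\bigl(\norm{F}_2+\abs{q}_{7/2}\bigr),
\end{equation*}
where $F$ denotes the right-hand side above.

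To bound $\norm{F}_2$ I would use the product estimate \eqref{co0}, the fact that $H^4_{r,z}$ is a multiplicative algebra, and the Hardy inequality (Lemma \ref{hardy}) to handle the factor $(v^{\theta})^2/R$, relying on $v^{\theta}(0,z)=0$ inherited from the initial data and transport structure of \eqref{sub2}. The term $\Div_{\ak}((b_0\cdot\nabla)^2\zeta)$ is controlled by $\norm{b_0\cdot\nabla\zeta}_4\norm{\ak}_3$, and $R(b_0\cdot\nabla\Theta)^2$ by $\norm{Rb_0\cdot\nabla\Theta}_4^2$; both quantities appear in $\mathfrak{E}^{\kappa}(t)$. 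The commutator $[\partial_t,\Div_{\ak}]\nu$ produces $\partial_t\ak\cdot\nabla\nu$, whose bound follows from the identity \eqref{partialF} together with $\partial_t\zeta^\kappa$-control \eqref{tes0}, giving at most $P(\norm{\zeta}_4,\norm{\nu}_4)$. For the boundary data, $\abs{C(t)^2/\Rk^2}_{7/2}\le P(|C(t)|,\abs{\Rk}_{7/2})\le P(\mathfrak{E}^{\kappa})$ once one notes that $C(t)$ is determined by the ODE following \eqref{cformula} and $\Rk$ is controlled by the trace theorem and \eqref{tes1}. Combining these yields the first part of \eqref{pressure}.

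The estimate for $\partial_tq$ is obtained by differentiating the elliptic problem in time. Writing $\Delta_{\ak}(\partial_tq)=\partial_tF-[\partial_t,\Delta_{\ak}]q$, $H^3$ elliptic regularity gives $\norm{\partial_tq}_3\le C(\norm{\partial_tF}_1+\norm{[\partial_t,\Delta_{\ak}]q}_1+\abs{\partial_tq}_{5/2})$. The interior terms are controlled by further application of product estimates: using the evolution equations $\partial_t\zeta=\nu+\fk$, $\partial_t\nu=-\nabla_{\ak}q+(b_0\cdot\nabla)^2\zeta+\text{l.o.t.}$, and $\partial_tv^{\theta}$ from \eqref{sub2}, each time derivative is traded for a spatial derivative and thereby absorbed into the existing $q$-bound and $\mathfrak{E}^{\kappa}$. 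The main obstacle is the boundary datum for $\partial_tq$: differentiating $q=\tfrac{1}{2}C(t)^2/(\Rk)^2$ gives
\begin{equation*}
\partial_tq=\frac{C(t)C'(t)}{(\Rk)^2}-\frac{C(t)^2\partial_t\Rk}{(\Rk)^3}\quad\text{on }\Gamma.
\end{equation*}
Here $C'(t)=A(t)C(t)$ is controlled by \eqref{cformula} in terms of boundary traces of $\nu,\zeta$, and $\partial_t\Rk=\partial_t R+\partial_t\phi^\kappa$ is handled by \eqref{tes0}. One must avoid losing a derivative on the boundary: the extra factor $\Lambda_\kappa^2$ in the definition of $\zeta^\kappa$, together with the trace theorem applied to $\phi^\kappa$, gives $\abs{\partial_tq}_{5/2}\le P(\mathfrak{E}^{\kappa})$ uniformly in $\kappa$. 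Assembling all bounds delivers \eqref{pressure}.
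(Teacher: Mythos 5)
Your high-level plan matches the paper's: derive an elliptic problem for $q$ by applying the divergence operator to the momentum equation, impose the Dirichlet data $q=\tfrac12 C^2/(\Rk)^2$ on $\Gamma$, and then appeal to elliptic estimates plus product/commutator/Hardy bounds for the forcing and boundary terms. The forcing computation, the treatment of the boundary trace, and the strategy for $\partial_t q$ (differentiate the elliptic problem in time, use the evolution equations including \eqref{vt} to control $\partial_t F$) are all essentially what the paper does.

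The genuine gap is the phrase \emph{``standard $H^4$ elliptic regularity then yields $\norm{q}_4\le C(\norm{F}_2+\abs{q}_{7/2})$.''} The operator you write as $\Delta_{\ak}$ is, in the paper's formulation \eqref{ell}, $\tfrac{1}{\Rk}\partial_{a_i}(\Rk E^\kappa_{ij}\partial_{a_j}\cdot)$, which is the pull-back of the axisymmetric 3D Laplacian to the 2D $(r,z)$ domain. It is \emph{not} uniformly elliptic on $\Omega$ as a 2D operator: the $\tfrac{1}{\Rk}\sim\tfrac1r$ factor degenerates at the axis $r=0$, and the function spaces $H^k_{r,z}$ are weighted by $r$. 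There is no off-the-shelf $H^4$ elliptic estimate that you can simply cite here — the coefficients are merely in $H^3$, the problem is degenerate at the axis, and the spaces are weighted. The paper's proof of Proposition~\ref{pr} is dominated by exactly this difficulty: after splitting off the harmonic extension $\hat h$, it (i) obtains a weighted $L^2$ gradient bound by testing with $\tfrac{\Rk}{r}\hat q$; (ii) iterates in the benign tangential direction, applying $\partial_z^k$ to get $\norm{\partial_z^k q}_1$ for $k\le 3$; and (iii) recovers the $\partial_r$-derivatives not from an elliptic regularity theorem but from the algebraic identity $\tfrac{1}{\Rk}\partial_r(\Rk^2\mathfrak g)=\mathfrak G$ with $\mathfrak g=E^\kappa_{1j}\partial_{a_j}q/\Rk$, together with the careful integration-by-parts in \eqref{labelss} that produces $\norm{\Rk\partial_r\mathfrak g}_0^2+\norm{\mathfrak g}_0^2$ with the correct sign on the boundary term. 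Steps (ii)--(iii) are then iterated in $\partial_z$ and $\partial_r$ to fill in all mixed derivatives up to order four. Your proposal does not engage with this at all, and without some replacement for it — either a citation to an axisymmetric weighted elliptic regularity theorem applicable to an operator with $H^3$ coefficients, or the hands-on bootstrap the paper uses — the claimed $H^4$ bound for $q$ (and likewise the $H^3$ bound for $\partial_t q$) is not established.

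A smaller imprecision: you write $\Delta_{\ak}q=\cdots+[\partial_t,\Div_{\ak}]\nu$ in non-divergence form, whereas the paper keeps the weighted divergence form $\tfrac{1}{\Rk}\partial_{a_i}(\Rk E^\kappa_{ij}\partial_{a_j}q)$. The divergence form is not cosmetic: it is what makes the testing with $\tfrac{\Rk}{r}\hat q$ produce a nonnegative energy, and it is also what makes the first-order ODE-in-$r$ rewrite $\tfrac{1}{\Rk}\partial_r(\Rk^2\mathfrak g)=\mathfrak G$ available. If you are going to carry out the regularity argument yourself rather than cite a theorem, you will need to keep this structure.
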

\begin{proof}
Taking $J^{\kappa}\Div_{\ak}$ on the second equation of \eqref{sub1} to get:
\begin{equation}
	\begin{cases}
		\dfrac{1}{\Rk}\partial_{a_i}(\Rk E^{\kappa}_{ij}\partial_{a_j} q)=G_1+b_0\cdot\nabla G_2 \\
	q|_{\Gamma}=\dfrac{C^2(t)}{{\Rk}^2}
\end{cases}
	\label{ell}
\end{equation}
where
\begin{equation*}
	\begin{split}
		&E^{\kappa}_{ij}:=J^{\kappa}\ak_{\ell i}\ak_{\ell j},\\
		&G_1:=\Jk\left[\Div_{\ak},\partial_t\right]\nu+\left(\dfrac{\Jk}{\Rk}+\Jk\pa^{\ak}_R\right)\left(\dfrac{(v^{\theta})^2}{R}-R(b_0\cdot\nabla\Theta)^2\right)+\left[\Jk\Div_{\ak}, b_0\cdot\nabla\right]b_0\cdot\nabla \zeta\\
	&G_2:=\Jk\Div_{\ak}(b_0\cdot\nabla\zeta)
\end{split}
\end{equation*}
Note that  by \eqref{inin3} the matrix $\Ek$ is symmetric and positive.

We denote $\hat h(r,z,t)$ as the harmonic extension of $\frac{C^2(t)}{{\Rk}^2}$:
\begin{equation*}
\begin{cases}
(\partial_r^2+\dfrac{1}{r}\partial_r+\partial_z^2)\hat h =0\,\,&\text{in } \Omega,\\
\hat h= \frac{C^2(t)}{{\Rk}^2}\,\,&\text{on } \Gamma,
\end{cases}
\end{equation*}
and by the Trace theorem, we have
\begin{equation}
\label{hq}
	\norm{\hat h}_4^2\ls \abs{\dfrac{C^2(t)}{{\Rk}^2(R_0,z)}}_{3.5}^2\leq CP\left(\sup_{t\in [0,T]}\mathfrak{E}^{\kappa}(t)\right).
\end{equation}
And then $\hat q = q-\hat h$ satisfying the following elliptic equation with zero Dirichlet boundary condition:
\begin{equation}
	\dfrac{1}{\Rk}\partial_{a_i}(\Rk E^{\kappa}_{ij}\partial_{a_j}\hat q)=G_1+b_0\cdot\nabla G_2+	\dfrac{1}{\Rk}\partial_{a_i}(\Rk E^{\kappa}_{ij}\partial_{a_j} \hat h)
	\label{pressure2}
\end{equation}
Timing $\frac{\Rk}{r}\hat q$ on the equation \eqref{pressure2}, integrating on $\Omega$ and using integration-by-parts, we have
\begin{equation*}
	\int_{\Omega}\dfrac{\Rk}{r}E^{\kappa}_{ij}\partial_{a_j} \hat q\partial_{a_{i}} \hat q\,dx=\int_{\Omega}\dfrac{\Rk}{r}G_1\hat q\,dx+\int_{\Omega} \dfrac{\Rk}{r}G_2 b_0\cdot\nabla\hat q\,dx+\int_{\Omega}\dfrac{\Rk}{r}E^{\kappa}_{ij}\partial_{a_j} \hat h\partial_{a_i}\hat q\,dx	
\end{equation*}
Thus, with a priori assumption \eqref{inin3} and Poincare's inequality, we arrive at
\begin{equation}
	\norm{D \hat q}_0^2\ls \norm{G_1}_0^2+\norm{G_2}_0^2+\norm{E^{\kappa}_{ij}\partial_{a_j} \hat h}_0^2\leq CP\left(\sup_{t\in [0,T]}\mathfrak{E}^{\kappa}(t)\right).
	\end{equation}
and hence with \eqref{hq}, we have
\begin{equation}
\label{dfddd}
\norm{q}_1^2\ls P\left(\sup_{t\in [0,T]}\mathfrak{E}^{\kappa}(t)\right).
\end{equation}
Next, applying $\partial_z^{k}$, $k=1,2,3$ to the equation \eqref{pressure2} leads to
\begin{equation*}
\begin{split}
	\dfrac{1}{\Rk}\partial_{a_i}(\Rk E^{\kappa}_{ij}\partial_{a_j} \partial_z^k\hat q)=&\partial_z^kG_1+b_0\cdot\nabla \partial_z^kG_2+	\dfrac{1}{\Rk}\partial_{a_i}(\Rk E^{\kappa}_{ij}\partial_{a_j} \partial_z^k\hat h)\\&+\dfrac{1}{\Rk}\partial_{a_i}\left(\left[\partial_z^k,\Rk E^{\kappa}_{ij}\partial_{a_j}\right](\hat h-\hat q)\right)+\left[\partial_z^k, b_0\cdot\nabla\right]G_2.
\end{split}
\end{equation*}
Thus, similarly, we obtain
\begin{equation*}
\begin{split}
\norm{\partial_z^k \hat q}_1^2\ls& \norm{\partial_z^k G_1}_0^2+\norm{\partial_z^k G_2}_0^2+\norm{E^{\kappa}_{ij}\partial_{a_j}\partial_z^k \hat h}_0^2+\norm{\left[\partial_z^k,\Rk E^{\kappa}_{ij}\partial_{a_j}\right](\hat h-\hat q)}_0^2\\&+\norm{[\partial_z^k, b_0\cdot\nabla]G_2}_0^2,
\end{split}
\end{equation*}
and then
\begin{equation*}
	\norm{\partial_z^k \hat q}_1^2\leq C\left(P\left(\sup_{t\in [0,T]}\mathfrak{E}^{\kappa}(t)\right)+\norm{\partial_z^{k-1}\hat q}_1^2\right).
\end{equation*}
Combining with \eqref{hq} again, we have
\begin{equation}
\label{prs}
\norm{\partial_z^k q}_1^2\leq C\left(P\left(\sup_{t\in [0,T]}\mathfrak{E}^{\kappa}(t)\right)+\norm{\partial_z^{k-1} q}_1^2\right).
\end{equation}
In order to obtain other high order derivatives of $q$, we denote $\mathfrak g=\dfrac{\Ek_{1j}\partial_{a_j}q}{\Rk}$ and rewrite the first equation of \eqref{ell} as
\begin{equation}
	\label{pressure3}
	\dfrac{1}{\Rk}\partial_r({\Rk}^2 \mathfrak g)={\Rk}\partial_r\mathfrak g+2\partial_r{\Rk}\mathfrak g=\mathfrak G
\end{equation}
where
\begin{equation*}
\mathfrak G:=\left(G_1+b_0\cdot\nabla G_2+\dfrac{1}{{\Rk}}\partial_z({\Rk}\Ek_{2j}\partial_{a_j} q)\right).
\end{equation*}
Then we obtain
\begin{equation}
	\norm{{\Rk}\partial_r\mathfrak g+2\partial_r{\Rk}\mathfrak g}_0^2\leq  \norm{\mathfrak G}_0^2\leq CP\left(\sup_{t\in [0,T]}\mathfrak{E}^{\kappa}(t)\right).
	\label{jhieoe}
\end{equation}
With integration-by-parts and a priori assumption \eqref{inin3},  we have
\begin{equation}
\begin{split}
	&\int_{\Omega}\left({\Rk}\partial_r\mathfrak g+2\partial_r{\Rk}\mathfrak g\right)^2\,dx\\=	&\norm{{\Rk} \partial_r\mathfrak g}_0^2+4\norm{\partial_r{\Rk}\mathfrak g}_0^2+\int_{\Omega}4{\Rk}\partial_r\mathfrak g\partial_r{\Rk}\mathfrak g\,dx\\=&\norm{{\Rk} \partial_r\mathfrak g}_0^2+2\norm{\partial_r{\Rk}\mathfrak g}_0^2-2\int_{\mathbb T}\int_0^{R_0}\partial_r(r\partial_r{\Rk}){\Rk}\abs{\mathfrak g}^2\,drdz+2\int_{\mathbb T}R_0{\Rk}(R_0,z)\abs{\mathfrak g}^2(R_0, z)\,dz\\\geq &\norm{{\Rk} \partial_r\mathfrak g}_0^2+2\norm{\partial_r{\Rk}\mathfrak g}_0^2-CT\sup_{t\in [0,T] }\abs{\partial_r(r\partial_r \partial_t {\Rk})}_{L^{\infty}}\norm{\mathfrak g}_0^2\\\geq & \norm{{\Rk} \partial_r\mathfrak g}_0^2+\norm{\mathfrak g}_0^2
\end{split}
	\label{labelss}
\end{equation}
by taking $T$ sufficiently small (only depend on $M$).
Thus,  we arrive at
\begin{equation}
	\norm{{\Rk} \partial_r\mathfrak g}_0^2+\norm{\mathfrak g}_0^2\leq CP\left(\sup_{t\in [0,T]}\mathfrak{E}^{\kappa}(t)\right)
	\label{ef}
\end{equation}
and as a consequence, we have
	\begin{equation}
		\norm{\partial_r (\Ek_{1j}\partial_{a_j} q)}_0^2
\leq \norm{\partial_r{\Rk}\mathfrak g}_0^2+\norm{{\Rk}\partial_r\mathfrak g}_0^2\leq CP\left(\sup_{t\in [0,T]}\mathfrak{E}^{\kappa}(t)\right).
		\label{jifiefjoq}
	\end{equation}
Then by using \eqref{prs} and a priori assumption \eqref{inin3} again, we have
\begin{equation}
\label{riep}
\norm{\partial^2_r q}_0^2\leq \norm{\dfrac{1}{\Ek_{11}}\partial_r (\Ek_{1j}\partial_{a_j} q)}_0^2+\norm{\dfrac{1}{\Ek_{11}}\partial_r \Ek_{11}\partial_r q}_0^2+\norm{\dfrac{1}{\Ek_{11}}\partial_r (\Ek_{12}\partial_z q)}_0^2 \leq CP\left(\sup_{t\in [0,T]}\mathfrak{E}^{\kappa}(t)\right).
\end{equation}
Next, by acting $\partial_z, \partial_z^2$ on the equation \eqref{pressure3}, we have
\begin{equation*}
\begin{split}
{\Rk}\partial_r\partial_z\mathfrak g+2\partial_r{\Rk}\partial_z\mathfrak g=&-\partial_z{\Rk}\partial_r\mathfrak g-2\partial_z\partial_r{\Rk}\mathfrak g+\partial_z \mathfrak G\\
{\Rk}\partial_r\partial_z^2\mathfrak g+2\partial_r{\Rk}\partial_z^2\mathfrak g=&-\partial_z{\Rk}\partial_r\partial_z\mathfrak g-2\partial_r\partial_z{\Rk}\partial_z\mathfrak g+\partial_z\left(-\partial_z{\Rk}\partial_r\mathfrak g-2\partial_z\partial_r{\Rk}\mathfrak g+\partial_z \mathfrak G\right)
\end{split}
\end{equation*}
Then by a similar approach from \eqref{jhieoe} to \eqref{riep}, for $k=1,2,$ we can obtain
\begin{equation*}
	\norm{{\Rk} \partial_r\partial_z^k\mathfrak g}_0^2+\norm{\partial_z^k\mathfrak g}_0^2\leq CP\left(\sup_{t\in [0,T]}\mathfrak{E}^{\kappa}(t)\right)
\end{equation*}
and hence
\begin{equation*}
\norm{\partial_r^2\partial_z^2 q}_0^2\leq CP\left(\sup_{t\in [0,T]}\mathfrak{E}^{\kappa}(t)\right).
\end{equation*}
Finally, we act $\partial_r, \partial_r^2$ on the equation \eqref{pressure3} to obtain for $k=1,2,$
\begin{equation}
\norm{{\Rk} \partial_r^{k+1}\mathfrak g}_0^2+\norm{\partial_r^k\mathfrak g}_0^2\leq CP\left(\sup_{t\in [0,T]}\mathfrak{E}^{\kappa}(t)\right)
	\label{ipre}
\end{equation}
and hence
\begin{equation}
\norm{\partial_r^4 q}_0^2+\norm{\partial_r^3\partial_z q}_0^2\leq CP\left(\sup_{t\in [0,T]}\mathfrak{E}^{\kappa}(t)\right).
\end{equation}
Combining with \eqref{prs}, we obtain
\begin{equation}
\label{3q}
\norm{q}_4^2\leq P(\mathfrak E^{\kappa}).
\end{equation}

We now estimate $\partial_t q$. Applying $\partial_t$ to the equation \eqref{ell} and by arguing similarly as for \eqref{3q}, we can obtain
\begin{equation}\label{3qt}
\norm{ \dt q}^2_3\leq P\left( \mathfrak{E}^{\kappa} \right).
\end{equation}
Here we have used the estimates \eqref{tes1}--\eqref{fest2} and noted that by using the second equation in \eqref{sub1} and the estimates \eqref{3q}:
\begin{equation}
\label{vt}
\norm{\partial_t\nu}_3^2=\norm{-\nabak q+(b_0\cdot \nabla)^2 \zeta+\left(\dfrac{(v^{\theta})^2}{R}-R(b_0\cdot\nabla \Theta)^2, 0\right)}_3^2 \leq P\left( \mathfrak{E}^{\kappa} \right).
\end{equation}
Consequently, the estimates \eqref{3q} and \eqref{3qt} give \eqref{pressure}.

\end{proof}
\subsubsection{Tangential estimates for $\nu=(v^r, v^z)$}
We start with the basic $L^2$ energy estimates.
\begin{proposition}\label{basic}
For $t\in [0,T]$ with $T\le T_\kappa$, it holds that
\begin{equation}\label{00estimate}
\norm{\nu(t)}_0^2+\norm{(b_0\cdot\nabla\zeta)(t)}_0^2\leq M_0+TP\left(\sup_{t\in[0,T]}\mathfrak{E}^{\kappa}(t)\right).
\end{equation}
\end{proposition}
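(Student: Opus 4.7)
The plan is a weighted $L^2$ energy estimate. First I take the $L^2$ inner product of the momentum equation in \eqref{sub1} against $\nu$ with respect to the axisymmetric reference measure $r\,dr\,dz$, which produces
\begin{equation*}
\tfrac12\tfrac{d}{dt}\norm{\nu}_0^2 + \int_\Omega \nabla_{\ak}q\cdot\nu\, r\,dr\,dz = \int_\Omega (b_0\cdot\nabla)^2\zeta\cdot\nu\, r\,dr\,dz + \int_\Omega\left(\tfrac{(v^\theta)^2}{R} - R(b_0\cdot\nabla\Theta)^2\right) v^r\, r\,dr\,dz.
\end{equation*}
The source term on the far right is bounded directly by $P(\mathfrak{E}^\kappa)$ via Cauchy--Schwarz together with the Hardy-type inequality of Lemma \ref{hardy} applied to $v^\theta/R$ (using the axis vanishing $v^\theta|_{r=0}=0$, which is preserved by \eqref{sub2}).

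For the pressure contribution I integrate $\ak_{ij}\partial_{a_j}q$ by parts over $\Omega$. Since the only spatial boundary is $\Gamma=\{r=R_0\}$, this produces a boundary integral $\int_\Gamma q\,\ak_{i1}\nu_i\,R_0\,dz$ plus an interior integral. The boundary term is controlled by $|q|_{L^2(\Gamma)}|\nu|_{L^2(\Gamma)}$ and hence by $P(\mathfrak{E}^\kappa)$ through the trace theorem and the pressure bound \eqref{pressure}. In the interior integral, the Piola identity \eqref{polia} combined with the constraint $\Div_{\ak}\nu = 0$ reduces $\partial_{a_j}(r\ak_{ij}\nu_i)$ to a lower-order geometric correction proportional to $r\,\ak_{ij}\nu_i\,\partial_{a_j}\log(J^\kappa R^\kappa)$; the a priori assumption \eqref{inin3} bounds the associated coefficient, so together with \eqref{pressure} this interior piece is again $\le P(\mathfrak{E}^\kappa)$.

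For the magnetic tension term I integrate $(b_0\cdot\nabla)^2\zeta$ by parts in the first-order operator $b_0\cdot\nabla = b_0^r\partial_r + b_0^z\partial_z$. The boundary contribution on $\Gamma$ vanishes because $b_0^r|_\Gamma=0$ by \eqref{bcond}, and the $r$-weighted formal adjoint of $b_0\cdot\nabla$ is exactly $-b_0\cdot\nabla$ thanks to the divergence relation $\partial_r b_0^r + b_0^r/r + \partial_z b_0^z = 0$ from the same \eqref{bcond}. This yields
\begin{equation*}
\int_\Omega (b_0\cdot\nabla)^2\zeta\cdot\nu\, r\,dr\,dz = -\int_\Omega (b_0\cdot\nabla)\zeta\cdot(b_0\cdot\nabla)\nu\, r\,dr\,dz.
\end{equation*}
Substituting $\partial_t\zeta = \nu + \fk$ from \eqref{sub1} and writing $(b_0\cdot\nabla)\nu = \partial_t(b_0\cdot\nabla\zeta) - (b_0\cdot\nabla)\fk$ turns the right-hand side into $-\tfrac12\tfrac{d}{dt}\norm{b_0\cdot\nabla\zeta}_0^2$ plus an error $\int (b_0\cdot\nabla)\zeta\cdot(b_0\cdot\nabla)\fk\, r\,dr\,dz$ controlled by $P(\mathfrak{E}^\kappa)$ through \eqref{fest22}.

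Combining the three contributions gives $\tfrac{d}{dt}\!\left(\norm{\nu}_0^2 + \norm{b_0\cdot\nabla\zeta}_0^2\right)\le P(\mathfrak{E}^\kappa(t))$, and integrating on $[0,t]$ with $t\le T$ yields the claim; indeed, at $t=0$ we have $\zeta=(r,z)$, so $(b_0\cdot\nabla\zeta)|_{t=0}=(b_0^r,b_0^z)$ and the initial energy is bounded by $M_0$. The most delicate point is the pressure integration by parts: because $\zeta^\kappa$ is not volume-preserving one cannot expect the interior term to cancel outright, and one is left with a genuinely nonzero remainder. Here this is harmless since $q$ and $\nu$ are both controlled at the $L^2$ level through \eqref{pressure}, but it foreshadows the much more subtle cancellations (via Alinhac good unknowns, and the symmetric structure of $\mathcal I_b$) that are needed for the tangential estimate of Proposition \ref{th412}.
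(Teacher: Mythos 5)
Your proposal is correct in substance, and the treatment of the magnetic tension term and the source term coincides with the paper's argument. The genuine divergence is in the pressure term: the paper does \emph{not} integrate by parts at all. It simply bounds
\begin{equation*}
-\int_{\Omega}\nabla_{\ak}q\cdot\nu\,dx \;\lesssim\; \norm{\a^{\kappa}}_{L^{\infty}}\norm{q}_{1}\norm{\nu}_{0}\;\leq\; P\Big(\sup_{t\in[0,T]}\mathfrak{E}^{\kappa}(t)\Big),
\end{equation*}
using nothing but Cauchy--Schwarz and the already-established pressure estimate \eqref{pressure}. No derivative needs to be moved off $q$ at the $L^2$ level, since $\norm{q}_{1}$ is freely available; there is no cancellation to arrange, and so no boundary or Piola/Jacobian bookkeeping enters.

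Your integration by parts also works, but it is genuinely more delicate than you acknowledge and it buys nothing at this order. Two points deserve care if one does go your route. First, after the IBP the interior remainder is not only the $r\,\ak_{ij}\nu_i\,\partial_{a_j}\log(J^{\kappa}R^{\kappa})$ piece you describe: differentiating the explicit weight $r$ produces an additional term $\a^{\kappa}_{i1}\nu_i$, and once one reinserts the $r$-weight into the measure this becomes $\a^{\kappa}_{i1}\nu_i/r$, which contains $\a^{\kappa}_{21}v^{z}/r$; controlling that relies on the axis behaviour $\a^{\kappa}_{21}(0,z)=0$ (which follows from $R^{\kappa}(0,z)\equiv 0$) and Hardy's inequality applied to $\a^{\kappa}_{21}$ rather than to $v^z$. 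Second, the boundary integral $\int_{\Gamma}q\,\a^{\kappa}_{i1}\nu_i\,R_0\,dz$ is indeed bounded, but doing so invokes the trace of $\nu$, hence $\norm{\nu}_{1}$, which is one derivative more than the simple $\norm{\nu}_{0}$ bound the direct argument uses. None of this is wrong, but it is extra machinery; the paper reserves the IBP-plus-cancellation analysis (boundary symmetry of $\mathcal{I}_b$, Alinhac unknowns) for the top-order tangential estimate in Proposition \ref{te}, where it is actually forced, and keeps this basic $L^2$ step deliberately elementary.
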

\begin{proof}
Taking the $L^2(\Omega)$ inner product of the second equation in \eqref{sub1} with $\nu$ yields
\begin{equation}\label{hhl1}
 \dfrac{1}{2}\dfrac{d}{dt}\int_{\Omega}\abs{\nu}^2 +\int_{\Omega} \nak q\cdot \nu -\int_{\Omega}(b_0\cdot\nabla)^2\zeta \cdot \nu =\int_{\Omega}\left(\dfrac{(v^{\theta})^2}{R}-R(b_0\cdot\nabla \Theta)^2\right) v^r.
\end{equation}
Using the pressure estimates \eqref{pressure}, we have
\begin{equation}
-\int_{\Omega} \nak q\cdot \nu\le\norm{D \a^\kappa}_{L^{\infty}}\norm{\nu}_0\norm{q}_1\le P\left(\sup_{t\in[0,T]}\mathfrak{E}^{\kappa}(t)\right).
\end{equation}
By Hardy's inequality, we have
\begin{equation}
\int_{\Omega}\left(\dfrac{(v^{\theta})^2}{R}-R(b_0\cdot\nabla \Theta)^2\right) v^r \le P\left(\sup_{t\in[0,T]}\mathfrak{E}^{\kappa}(t)\right).
\end{equation}
Since $b_0$ satisfies \eqref{bcond}, by the integration by parts and using $\dt \zeta=\nu+\fk$, we obtain
\begin{equation}\label{hhl3}
\begin{split}
-\int_{\Omega}(b_0\cdot\nabla)^2\zeta\cdot v&=\int_{\Omega}b_0\cdot\nabla\zeta_ib_0\cdot\nabla \nu_i\\&=\int_{\Omega}b_0\cdot\nabla\zeta_ib_0\cdot\nabla \partial_t\zeta_i\,dx-\int_{\Omega}b_0\cdot\nabla\zeta_ib_0\cdot\nabla \psi^\kappa_i
\\&=\dfrac{1}{2}\dfrac{d}{dt}\int_{\Omega}\abs{b_0\cdot\nabla\zeta}^2\,dx-\int_{\Omega}b_0\cdot\nabla\eta_ib_0\cdot\nabla \psi^\kappa_i
\end{split}
\end{equation}
Then \eqref{hhl1}--\eqref{hhl3} implies, using the estimates \eqref{fest1},
\begin{equation}
\dfrac{d}{dt}\int_{\Omega}\abs{\nu}^2+\abs{b_0\cdot\nabla\zeta}^2 \leq P\left(\sup_{t\in[0,T]}\mathfrak E^{\kappa}(t)\right).
\end{equation}
Integrating directly in time of the above yields \eqref{00estimate}.
\end{proof}
In order to perform higher order tangential energy estimates, one needs to compute the equations satisfied by $(\bp^4 \nu, \bp^4 q, \bp^4 \zeta)$, which requires to commutate $\bp^4$ with each term of $\pa^{\ak}_{\zeta_i}$. It is thus useful to establish the following general expressions and estimates for commutators.
 we have
\begin{equation}
	\bp^4 (\pa^{\ak}_{\zeta_i}g) =  \pa^{\ak}_{\zeta_i} \bp^4 g + \bp^4 \ak_{ij} \pa_{a_j} g+\left[\bp^4, \ak_{ij} ,\pa_{a_j} g\right].
\end{equation}
By the identity \eqref{partialF}, we have that
\begin{equation}
\begin{split}
&\bp^4 (\ak_{ij} \pa_{a_j} g)=-\bp^3({\ak}_{i\ell}\bp\pa_{a_{\ell}}  \zeta_m  \ak_{mj})\pa_{a_j} g
\\&\quad=-\ak_{i\ell}\pa_{a_\ell} \bp^4\zeta^m\ak_{mj}\pa_{a_j} g
-\left[\bp^3, \ak_{i\ell}\ak_{mj} \right]\bp \pa_{a_\ell}  \zeta_m \pa_{a_j} g
\\&\quad=-\pa^{\ak}_{\zeta_i}( \bp^4\zeta\cdot\nabla_{\ak} g)+\bp^4\zeta\cdot\nabla_{\ak}( \pa^{\ak}_{\zeta_i} g)
	-\left[\bp^3, \ak_{i\ell}\ak_{mj}\right]\bp\pa_{a_\ell} \zeta_m\pa_{a_j} g.\end{split}
\end{equation}
It then holds that
\begin{equation}\label{commf}
	\bp^4 (\pa^{\ak}_{\zeta_i}g) =  \pa^{\ak}_{\zeta_i}\left(\bp^4 g- \bp^4\zeta\cdot\nabla_{\ak} g\right)+ \mathcal{C}_i(g).
\end{equation}
where the commutator $\mathcal{C}_i(g)$ is given by
\begin{equation}
	\mathcal{C}_i(g)=\left[\bp^4, {\ak_{ij}},\pa_{a_j} g\right]-\bp^4\zeta\cdot\nabla_{\ak}( \pa^{\ak}_{\zeta_i}g)
	+\left[\bp^3, \ak_{i\ell}\ak_{mj}\right]\bp\pa_{a_\ell} \zeta_m\pa_{a_j} g
\end{equation}
It was first observed by Alinhac \cite{Alinhac} that the highest order term of $\zeta$ will be canceled when one uses the good unknown $\bp^4 g- \bp^4\zeta\cdot\nabla_{\ak} g$, which allows one to perform high order energy estimates.

The following lemma deals with the estimates of the commutator $\mathcal C_i(g)$.
\begin{lemma}
The following estimate holds:
\begin{equation}\label{comest}
	\norm{\mathcal C_i (g)}_0\leq  P\left(\norm{({\Rk},\Zk)}_4\right) \norm{g}_4.
\end{equation}
\end{lemma}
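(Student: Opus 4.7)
The plan is to decompose $\mathcal{C}_i(g) = T_1 + T_2 + T_3$ with
\begin{align*}
T_1 &:= \left[\bp^4, \ak_{ij}, \pa_{a_j} g\right], \\
T_2 &:= -\bp^4 \zeta \cdot \nabla_{\ak}\!\left(\pa^{\ak}_{\zeta_i} g\right), \\
T_3 &:= \left[\bp^3, \ak_{i\ell} \ak_{mj}\right]\!\left(\bp \pa_{a_\ell} \zeta_m \, \pa_{a_j} g\right),
\end{align*}
and to estimate each in $L^2_{r,z}$ separately. The common ingredient is that $\ak = \a(\zeta^\kappa)$ is a rational function of $\nabla \zeta^\kappa$, well-defined thanks to the Jacobian bound \eqref{inin3}; by the chain rule together with the algebra property of $H^2_{r,z}$ and the estimate \eqref{tes1}, one has $\norm{\ak}_3 \le P(\norm{\zeta^\kappa}_4) \le P(\norm{(\Rk,\Zk)}_4)$.

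For $T_1$, I would invoke the symmetric commutator estimate \eqref{co2} with $k=4$ applied to $(\ak_{ij}, \pa_{a_j} g)$: handling the unbalanced cases $D^3 \ak \cdot D^2 g$ and $D \ak \cdot D^4 g$ by the Sobolev embedding $H^2_{r,z} \hookrightarrow L^\infty$ (to place the lower-derivative factor in $L^\infty$) and the balanced case $D^2 \ak \cdot D^3 g$ by $L^4 \cdot L^4$ via $H^1 \hookrightarrow L^4$ yields $\norm{T_1}_0 \le P(\norm{\ak}_3)\norm{g}_4$. For $T_2$, I put $\bp^4 \zeta$ in $L^2$ and the second factor in $L^\infty$: since $\nabla_{\ak}(\pa^{\ak}_{\zeta_i} g)$ is a polynomial in $\ak$ and $D\ak$ times at most two derivatives of $g$, the embedding $H^2_{r,z} \hookrightarrow L^\infty$ and the algebra property give $\norm{\nabla_{\ak}(\pa^{\ak}_{\zeta_i} g)}_{L^\infty} \le P(\norm{\ak}_3)\norm{g}_4$, whence $\norm{T_2}_0 \le P(\norm{\ak}_3)\norm{\zeta}_4 \norm{g}_4$.

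For $T_3$, I would expand the commutator $[\bp^3, \ak_{i\ell}\ak_{mj}]$ by Leibniz as a finite sum $\sum_{1 \le \beta \le 3} c_\beta\, \bp^\beta(\ak_{i\ell}\ak_{mj}) \, \bp^{3-\beta}\!\bigl(\bp\pa_{a_\ell}\zeta_m \, \pa_{a_j} g\bigr)$, and estimate each summand by Hölder, placing the lower-derivative factor in $L^\infty$ via $H^2_{r,z} \hookrightarrow L^\infty$. Every resulting product contains at most $D^4\zeta^\kappa$ and at most $D^3 g$, hence is bounded by $P(\norm{\zeta^\kappa}_4)\norm{g}_4$. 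Summing the three contributions yields \eqref{comest}. The calculation is routine; the only delicate point is to verify the absence of any $D^5 \zeta^\kappa$ or $D^5 g$ loss, which is guaranteed precisely by the commutator structure (the pure top-order Leibniz term has been subtracted off in each of $T_1$ and $T_3$) and, for $T_2$, by the Alinhac good-unknown decomposition that produced $\mathcal{C}_i(g)$ in the first place.
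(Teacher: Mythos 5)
Your decomposition of $\mathcal{C}_i(g)$ into the three terms $T_1,T_2,T_3$ and your treatment of each — the symmetric commutator estimate \eqref{co2} (equivalently, the explicit Leibniz/H\"older/Sobolev argument) for $T_1$, the $L^2\times L^\infty$ H\"older split with $\bp^4\zeta$ in $L^2$ for $T_2$, and a commutator/Leibniz expansion for $T_3$ — is exactly the paper's proof, which invokes \eqref{co2}, a direct $L^2\times L^\infty$ bound, and \eqref{co1} for the three terms respectively. The proposal is correct and essentially the same argument.
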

\begin{proof}
First, by the commutator estimates \eqref{co2}, we have
\begin{equation}\label{Calpha1}
	\norm{[\bp^4, \ak_{ij}, \partial_{a_j} g]}_0\ls\norm{\ak}_{3}\norm{D g}_{3}\leq P\left(\norm{({\Rk}, \Zk)}_4\right) \norm{g}_4.
\end{equation}
Next, we get
\begin{equation}
	\norm{\bp^4\zeta\cdot\nabla_{\ak}( \pa^{\ak}_{\zeta_i} g)}_0\le \norm{\bp^4\zeta}_0\norm{\nabla_{\ak}( \pa^{\ak}_{\zeta_i} g)}_{L^{\infty} } \leq P\left(\norm{({\Rk}, \Zk)}_4\right) \norm{g}_4.
\end{equation}
Finally, by the commutator estimates \eqref{co1}, we obtain
\begin{equation}\label{Calpha3}
	\norm{\left[\bp^3, \ak_{i\ell}\ak_{mj}\right]\bp\pa_{a_\ell}\zeta^m\pa_{a_j} g}_0\le \norm{\left[\bp^3, \ak_{i\ell}\ak_{mj}\right]\bp\pa_{a_\ell} \zeta_m}_0\norm{Dg}_{L^{\infty} }
	\leq P\left(\norm{({\Rk},\Zk)}_4\right) \norm{g}_3.
\end{equation}

Consequently, the estimate \eqref{comest} follows by collecting \eqref{Calpha1}--\eqref{Calpha3}.
\end{proof}

We now introduce the good unknowns
\begin{equation}
\label{gun}
\mathcal{V}=\bp^4 \nu- \bp^4\zeta\cdot\nabla_{\ak} \nu,\quad \mathcal{Q}=\bp^4 q- \bp^4\zeta\cdot\nabla_{\ak} q.
\end{equation}

With the condition \eqref{qbdd}, we have \begin{equation}
	\mathcal{Q}=\dfrac{1}{2}\bp^4\left(\dfrac{C^2(t)}{{\Rk}^2}\right)-\bp^4\zeta\cdot\nabla_{\ak} q\quad\text{on } \Gamma.
	\label{qbdc}
\end{equation}
Applying $\bp^4$ to the second equation of \eqref{sub1}, by \eqref{commf}, one gets
\begin{equation}\label{eqValpha}
	\begin{split}
		\partial_t\mathcal{V} &+ \nabla_{\ak}\mathcal{Q}-(b_0\cdot\nabla)\left(\bp^4(b_0\cdot\nabla \zeta)\right)\\&=F:=  \dt\left(\bp^4\zeta\cdot\nabla_{\ak} \nu\right) - \mathcal{C}_i(q) +\left[\bp^4, b_0\cdot\nabla\right]b_0\cdot\nabla \zeta+\bp^4\left(\dfrac{(v^{\theta})^2}{R}-{R}(b_0\cdot\nabla\Theta)^2\right),
\end{split}
\end{equation}
and
\begin{equation} \label{divValpha}
	\Div_{\ak}\mathcal{V}=g_3:=- \mathcal{C}_i(v^r)-\mathcal{C}_i(v^z)-\left(\bp^4(\dfrac{v^r}{{\Rk}})-\dfrac{\mathcal{V}_1}{{\Rk}}\right) .
\end{equation}

We shall now derive the $\bp^4$-energy estimates and have the following proposition
\begin{proposition}\label{te}
For $t\in [0,T]$, it holds that
\begin{equation}
\label{teee}
\norm{\partial_z^4 \nu}_0^2+\norm{\partial_z^4(b_0\cdot\nabla \zeta)(t)}_0^2+\abs{\bp^4\Lambda_{\kappa}\zeta_i \a^{\kappa}_{i1}(t)}_0^2\leq M_0+CTP\left(\sup_{t\in[0,T]}\mathfrak{E}^{\kappa}(t)\right).
\end{equation}
\end{proposition}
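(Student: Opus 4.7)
The plan is to test the good-unknown equation \eqref{eqValpha} against $\mathcal{V}$ in the weighted $L^2(\Omega)$ inner product and to identify natural total time-derivatives producing the three quantities on the left-hand side of \eqref{teee}. The time-derivative term gives $\tfrac12\tfrac{d}{dt}\|\mathcal{V}\|_0^2$. For the magnetic term $-(b_0\cdot\nabla)\bp^4(b_0\cdot\nabla\zeta)$, integration by parts produces no boundary contribution since $b_0^r|_\Gamma=0$ by \eqref{bcond}; substituting $\nu=\partial_t\zeta-\psi^\kappa$ from the first line of \eqref{sub1} and unfolding $\mathcal{V}=\bp^4\nu-\bp^4\zeta\cdot\nabla_{\ak}\nu$ yields $\tfrac12\tfrac{d}{dt}\|\bp^4(b_0\cdot\nabla\zeta)\|_0^2$ plus commutators and $\psi^\kappa$ terms controlled by Lemma~\ref{preest} and \eqref{co1}. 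The forcing $F$ in \eqref{eqValpha} is bounded via \eqref{comest} for $\mathcal{C}_i(q)$ (combined with \eqref{pressure}), via \eqref{co1} for $[\bp^4,b_0\cdot\nabla](b_0\cdot\nabla\zeta)$, and via the higher-order Hardy inequality \eqref{hardy} for the centrifugal/swirl piece.

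The crux is the pressure contribution $\int_\Omega\nabla_{\ak}\mathcal{Q}\cdot\mathcal{V}$. Integration by parts using Piola's identity \eqref{polia} splits it into an interior piece $-\int_\Omega\mathcal{Q}\,\divak\mathcal{V}$, controlled by $\|g_3\|_0\,\|\mathcal{Q}\|_0\le P(\mathfrak{E}^\kappa)$ thanks to \eqref{divValpha} and \eqref{pressure}, and the boundary piece $I_b:=\int_\Gamma R_0\,\mathcal{Q}\,\mathcal{V}_i\,\ak_{i1}\,dz$. Using \eqref{qbdc} together with the tangential identity $\partial_z q|_\Gamma=-C^2(t)\partial_z\Rk/{\Rk}^3$ rewrites $\mathcal{Q}|_\Gamma$ as
\begin{equation*}
-\frac{C^2(t)}{{\Rk}^3}\bigl(\bp^4\Rk-\bp^4\zeta_j\,\ak_{j2}\partial_z\Rk\bigr)-\bp^4\zeta_j\,\ak_{j1}\partial_r q+\mathrm{l.o.t.}
\end{equation*}
Since $\zeta^\kappa|_\Gamma=\Lambda_\kappa^2\zeta$ by \eqref{etadef}, the purely algebraic manipulation displayed in the strategy outline above, applied with $\zeta$ replaced by $\zeta^\kappa$, gives
\begin{equation*}
\bp^4\Rk-\Lambda_\kappa^2\bp^4\zeta_j\,\ak_{j2}\partial_z\Rk=\partial_r\Rk\;\Lambda_\kappa^2\bp^4\zeta_j\,\ak_{j1},
\end{equation*}
and commuting one $\Lambda_\kappa$ past $\ak_{j2}\partial_z\Rk$ and $\ak_{j1}\partial_r q$ via \eqref{es1-1/2} unveils the symmetric form
\begin{equation*}
I_b=\int_\Gamma R_0\,\partial_r\!\Bigl(\tfrac12\tfrac{C^2(t)}{{\Rk}^2}-q\Bigr)\,\ak_{j1}\Lambda_\kappa\bp^4\zeta_j\;\ak_{i1}\Lambda_\kappa\bp^4\nu_i\,dz+\mathrm{l.o.t.}
\end{equation*}

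Next, using $\partial_t\zeta=\nu+\psi^\kappa$ from \eqref{sub1}, I would rewrite $\ak_{i1}\Lambda_\kappa\bp^4\nu_i$ as $\partial_t(\ak_{i1}\Lambda_\kappa\bp^4\zeta_i)$ modulo $[\partial_t,\ak_{i1}]$ and $[\Lambda_\kappa,\ak_{i1}]$ commutators controlled by Lemma~\ref{preest} and \eqref{es1-1/2}, together with a $\psi^\kappa$-contribution whose leading part is designed to cancel by the choice of boundary data in \eqref{etaaa}. The resulting symmetric quadratic structure then delivers
\begin{equation*}
I_b=\tfrac12\tfrac{d}{dt}\int_\Gamma R_0\Bigl(-\partial_r\!\bigl(\tfrac12\tfrac{C^2(t)}{{\Rk}^2}-q\bigr)\Bigr)\bigl|\ak_{i1}\Lambda_\kappa\bp^4\zeta_i\bigr|^2\,dz+\mathrm{l.o.t.},
\end{equation*}
with coefficient pointwise bounded below by $\lambda/2$ on $[0,T_\kappa]$ by \eqref{ini2}. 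Collecting all contributions, integrating in time on $[0,T]$, and recovering $\|\bp^4\nu\|_0\le\|\mathcal{V}\|_0+\|\bp^4\zeta\|_0\|\nabla_{\ak}\nu\|_{L^\infty}$ (noting $\|\bp^4\zeta\|_0=O(T)$ since $\zeta|_{t=0}=(r,z)$) together with \eqref{00estimate} then delivers \eqref{teee}.

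The main obstacle is the extraction of the symmetric structure in $I_b$. In contrast with the free-surface Euler case \cite{CL_00,GW_16}, where $q|_\Gamma=0$ renders the cancellation transparent, here $q=\tfrac12 C^2(t)/R^2$ is nonzero on $\Gamma$ and the symmetry must be teased out by combining the algebraic $\ak$-identity above with the explicit formula for $\partial_z q|_\Gamma$ inherited from \eqref{qbdd}. Intertwined with this is the delicate bookkeeping of the smoothers $\Lambda_\kappa$: every trace of $\zeta^\kappa$ on $\Gamma$ carries a $\Lambda_\kappa^2$, so \eqref{es0-0}--\eqref{es1-1/2} are invoked repeatedly to convert between $\Lambda_\kappa^2\bp^4\zeta$, $\bp^4\zeta^\kappa$, and the $\Lambda_\kappa\bp^4\zeta$ eventually appearing in \eqref{teee}, while the boundary data of $\psi^\kappa$ prescribed in \eqref{etaaa} is precisely what makes all these manipulations close uniformly in $\kappa$.
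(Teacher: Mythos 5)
Your proposal follows the same route as the paper: test \eqref{eqValpha} against $\mathcal{V}$, integrate the magnetic term by parts, use Piola's identity to extract the boundary term $\mathfrak{I}$, exploit the algebraic $\ak$-identity \eqref{req} to reveal a symmetric quadratic structure, and combine with \eqref{00estimate} and the curl/divergence bounds. Two points need repair before this becomes a proof.

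First, the sign of the quadratic boundary coefficient is off. After the symmetrization, $\mathfrak{I}$ produces
$\tfrac12\tfrac{d}{dt}\int_\Gamma R_0\,\partial_r\bigl(\tfrac12 C^2(t)/\Rk^2-q\bigr)\,\bigl|\ak_{i1}\Lambda_\kappa\bp^4\zeta_i\bigr|^2\,dz$
(no extra minus in front of $\partial_r$; compare \eqref{gg3} and \eqref{ohoh}), and by \eqref{ini2} the coefficient $\partial_r\bigl(\tfrac12 C^2/\Rk^2-q\bigr)=-\partial_r\bigl(q-\tfrac12 C^2/\Rk^2\bigr)\ge\lambda/2$, so the term contributes positively to the energy. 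Your version inserts $-\partial_r(\cdot)$ and claims that quantity is bounded below by $\lambda/2$; it is in fact bounded above by $-\lambda/2$, which would make the quadratic form indefinite and destroy the argument. This is almost certainly a slip, but it must be corrected since the sign is exactly what the Rayleigh--Taylor condition \eqref{ini2} buys you.

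Second, the phrase ``a $\psi^\kappa$-contribution whose leading part is designed to cancel'' compresses the crux of the proof past the point of being checkable. After unfolding $\mathcal{V}_i=\bp^4\partial_t\zeta_i-\bp^4\fk_i-\bp^4\Lambda_\kappa^2\zeta_j\,\ak_{j\ell}\partial_{a_\ell}\nu_i$ inside $\mathfrak{I}$, the boundary integral splits into five pieces $\mathcal{I}_1,\dots,\mathcal{I}_5$: $\mathcal{I}_1$ and $\mathcal{I}_3$ are commutator terms controllable by \eqref{pressure} and \eqref{es1-1/2}--\eqref{co123}; the $[\dt,\ak_{i1}]$ piece and the $-\bp^4\Lambda_\kappa^2\zeta\cdot\nabla_{\ak}\nu$ piece each contain one controllable part and one part ($\mathcal{I}_{2b}$ resp. $\mathcal{I}_{4b}$) that is \emph{not} controllable $\kappa$-uniformly by itself, since $|\bp^4\zeta|_{L^\infty}$ is unbounded and you cannot pass $\Lambda_\kappa$ through the coefficients without losing. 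The sole role of the boundary data in \eqref{etaaa} is to make $\mathcal{I}_5$ reproduce $-\mathcal{I}_{2b}-\mathcal{I}_{4b}$ exactly, up to benign commutators $\mathcal{I}_{5a},\mathcal{I}_{5b}$. Moreover, the remaining $\mathcal{I}_{2c}$ (the $\pa_z\Lambda_\kappa^2\fk$ piece) needs the sharp estimate $|\bp\fk|_{L^\infty}\lesssim\sqrt{\kappa}\,P(\norm{\zeta}_4,\norm{\nu}_3)$ paired with \eqref{loss} to cancel the $\kappa^{-1/2}$ loss. These are not ``lower order'' remainders absorbed by $P(\mathfrak{E}^\kappa)$; they are the reason $\fk$ was introduced at all, and the proof does not close without displaying this cancellation explicitly.

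Everything else you sketch --- the algebraic identity \eqref{req} adapted to $\zeta^\kappa$, the use of $b_0^r|_\Gamma=0$ to avoid boundary terms in the magnetic integration by parts, the Hardy inequality for the swirl forcing, the recovery of $\bp^4\nu$ from $\mathcal{V}$ and the $O(T)$ bound on $\bp^4\zeta$ --- matches the paper.
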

\begin{proof}
Taking the $L^2(\Omega)$ inner product of \eqref{eqValpha} with $\mathcal{V}$ yields
\begin{equation}
	\dfrac{1}{2}\dfrac{d}{dt}\int_{\Omega}|\mathcal{V}|^2\,dx+\int_{\Omega}\pa^\a_{\zeta_i}\mathcal Q \mathcal{V}_i\,dx+\int_{\Omega}\bp^4(b_0\cdot\nabla \zeta_i)b_0\cdot\nabla\mathcal{V}_i\,dx=\int_{\Omega}F\cdot\mathcal{V}\,dx.
	\label{fd}
\end{equation}
Firstly, the right hand side of \eqref{fd} can be bounded by
\begin{equation}
\label{ggg0}
\begin{split}
\quad\int_{\Omega}F\cdot\mathcal{V}\,dx &\leq \bigg(\norm{\dt\left(\bp^4\zeta\cdot\nabla_{\ak}\nu\right)}_0 - \norm{\mathcal{C}_i(q)}_0+\norm{\left[\bp^4, b_0\cdot\nabla\right]b_0\cdot\nabla\zeta}_0\\&\quad+\norm{\bp^4\left(\dfrac{(v^{\theta})^2}{R}-{R}(b_0\cdot\nabla\Theta)^2\bigg)}_0\right)\norm{\mathcal V}_0\\
&\leq CP\left(\sup_{t\in [0,T]}\mathfrak{E}^{\kappa}(t)\right).
\end{split}
\end{equation}
Here we use \eqref{comest} and we estimate
\begin{equation}
	\begin{split}
	\norm{\bp^4\left(\dfrac{(v^{\theta})^2}{{R}}\right)}_0&\leq C\left(\norm{{R} \bp^4(\dfrac{ v^{\theta}}{R})}_0\norm{\dfrac{ v^{\theta}}{{R}}}_{L^{\infty}}+\norm{ v^{\theta}}_4\norm{\dfrac{ v^{\theta}}{{R}}}_{L^{\infty}}\right)
	\\&\leq C\left(\norm{\bp^4 v^{\theta}-\left[\bp^4, \dfrac{v^{\theta}}{{R}}\right]{R}}_0 \norm{\dfrac{v^{\theta}}{{R}}}_{L^{\infty}}+\norm{v^{\theta}}_4\norm{\dfrac{ v^{\theta}}{{R}}}_{L^{\infty}}\right) \\&\leq CP\left(\sup_{t\in[0,T]}\mathfrak{E}^{\kappa}(t)\right)
\end{split}
\end{equation}
by using Hardy's inequality \eqref{hardy} and $\norm{\bp^4({R}(b_0\cdot\nabla\Theta)^2)}_0$ can also be bounded by $CP\left(\sup\limits_{t\in[0,T]}\mathfrak{E}^{\kappa}(t)\right)$.

Next, by the definition of $\mathcal V$ and recalling that $\nu=\partial_t\zeta-\fk$, we have
\begin{equation}
\label{gg1}
	\begin{split}
		&\int_{\Omega}\bp^4(b_0\cdot\nabla \zeta_i) b_0\cdot\nabla\mathcal{V}_i\,dx\\=&\int_{\Omega}\bp^4(b_0\cdot\nabla \zeta_i)b_0\cdot\nabla(\bp^4\nu_i)\,dx-\int_{\Omega}\bp^4(b_0\cdot\nabla \zeta_i)b_0\cdot\nabla(\bp^4\zeta\cdot\nabla_{\ak}\nu_i)\,dx\\=&\dfrac{1}{2}\dfrac{d}{dt}\int_{\Omega}|\bp^4(b_0\cdot\nabla \zeta)|^2\,dx+\int_{\Omega}\bp^4(b_0\cdot\nabla \zeta_i)\left[b_0\cdot\nabla,\bp^4\right]\nu_i\,dx\\&-\int_{\Omega}\bp^4(b_0\cdot\nabla \zeta_i)b_0\cdot\nabla(\bp^4\zeta\cdot\nabla_{\ak}\nu_i)\,dx-\int_{\Omega}\bp^4(b_0\cdot\nabla\zeta_i)   \left(\bp^4(b_0\cdot\nabla  \psi^\kappa_i)\right)\,dx\\\geq&\dfrac{1}{2}\dfrac{d}{dt}\left(2\pi\int_{\mathbb T}\int_{0}^{R_0}r|\bp^4(b_0\cdot\nabla \zeta)|^2\,drdz\right)-CP\left(\sup_{t\in [0,T]}\mathfrak{E}^{\kappa}(t)\right).
	\end{split}
\end{equation}
By integration-by-parts and \eqref{divValpha}, we have
\begin{equation}
	\begin{split}
		\int_{\Omega}\pa^{\ak}_{\zeta_i}\mathcal Q \mathcal{V}_i\,dx=&-\int_{\Omega}\dfrac{1}{{\Rk}}\mathcal{Q}\left(\pa^{\ak}_{\zeta_i}({\Rk}\mathcal{V}_i)\right)\,dx-\int_{\Omega}\pa_{a_j}\left(\dfrac{r}{{\Rk}}\ak_{ij}\right)\mathcal Q\dfrac{{\Rk}}{r}\mathcal{V}_i\,dx\\&+\underbrace{2\pi\int_{\mathbb T}R_0\mathcal Q\left(\mathcal{V}_i\ak_{i1}\right)\, dz}_{\mathfrak I}
\end{split}
	\label{gg2}
\end{equation}
The first two terms on the RHS of \eqref{gg2} can be bounded by
\begin{equation}
\label{gggg3}
 \norm{\mathcal{Q}}_0\left(\norm{g_3}_0+\norm{\mathcal V}_0\norm{\dfrac{r}{{\Rk}}\ak}_3\right) \leq CP\left(\sup_{t\in[0,T]}\mathfrak{E}^{\kappa}(t)\right).
\end{equation}
By the definition of $\mathcal{V}$ and $\mathcal Q$, since $\zeta^{\kappa}=\Lambda_\kappa^2\zeta$, $q=\hal\frac{C(t)^2}{\Lambda_\kappa^2R}$ on $\Gamma$, we have
\begin{equation}
\begin{split}
\mathfrak I&=\int_{\mathbb T} R_0(\bp^4q-\bp^4\Lambda_\kappa^2\zeta_j\ak_{j2}\partial_z q)\ak_{i1}\mathcal{V}_i\,dz-\int_{\bT}R_0\bp^4\Lambda_\kappa^2\zeta_j \ak_{j1}\partial_r q \ak_{i1}\mathcal{V}_i\,dz
\\&= \int_{\mathbb T} -R_0\dfrac{C(t)^2}{{\Rk}^3}\left(\bp^4\Lambda_\kappa^2 R-\bp^4\Lambda_\kappa^2\zeta_j\ak_{j2}\partial_z \Lambda_\kappa^2 R\right)\ak_{i1}\mathcal{V}_i\,dz-\int_{\bT}R_0\bp^4\Lambda_\kappa^2\zeta_j \ak_{j1}\partial_r q \ak_{i1}\mathcal{V}_i\,dz\\&\quad+\int_{\bT}R_0C(t)^2\left[\bp^3, \dfrac{1}{{\Rk}^3}\right]\bp \Lambda_{\kappa}^2 R\ak_{i1}\mathcal{V}_i\,dz
\end{split}
\end{equation}
The key observation here we have is the following: by using the definition of $\ak$, we have $\Jk \ak_{11}= \partial_z\Zk$, $\Jk \ak_{12}=-\partial_r\Zk$, $\Jk \ak_{21}=-\partial_z\Rk$, $\Jk \ak_{22}=\partial_r\Rk$, $\Jk=\partial_z\Zk\partial_r\Rk-\partial_R\Zk\partial_z\Rk$, and $\Rk=\Lambda_\kappa^2 R, \Zk=\Lambda_\kappa^2 Z$ on the boundary $\Gamma$, then
\begin{equation}
\label{req}
\begin{split}
\bp^4\Lambda_\kappa^2 R-\bp^4\Lambda_\kappa^2\zeta_j\ak_{j2}\partial_z \Lambda_\kappa^2 R&=\bp^4 \Lambda_\kappa^2 R\left(1-\ak_{12}\partial_z\Rk\right)+\bp^4\Lambda_\kappa^2 Z(-\ak_{22}\partial_z\Rk)
\\&=\bp^4 \Lambda_\kappa^2 R\left((\Jk)^{-1}\Jk+(\Jk)^{-1}\partial_r\Zk\partial_z\Rk\right)+\bp^4\Lambda_\kappa^2 Z(\ak_{22}\Jk\ak_{21})\\&=\bp^4 \Lambda_\kappa^2 R\left((\Jk)^{-1}\partial_z\Zk\partial_r\Rk\right)+\bp^4\Lambda_\kappa^2 Z(\ak_{22}\Jk\ak_{21})\\&=\bp^4 \Lambda_\kappa^2 R\left((\Jk)\ak_{11}\ak_{22}\right)+\bp^4\Lambda_\kappa^2 Z(\ak_{22}\Jk\ak_{21})\\&=\partial_r\Rk\bp^4\Lambda_\kappa^2\zeta_j\ak_{j1}
\end{split}
\end{equation}
With \eqref{req} and $\nu=\partial_t\zeta-\fk$, we then arrive at
\begin{equation}
\begin{split}
\mathfrak I&= \int_{\mathbb T}R_0\partial_r\left(\hal\dfrac{C(t)^2}{{\Rk}^2}-q\right)\left(\bp^4\Lambda_\kappa^2\zeta_j \ak_{j1}\right)\ak_{i1}\mathcal{V}_i\,dz\\&\quad+\int_{\bT}R_0C(t)^2\left[\bp^3, \dfrac{1}{{\Rk}^3}\right]\bp \Lambda_{\kappa}^2 R\ak_{i1}\mathcal{V}_i\,dz\\&= \int_{\mathbb T}R_0\partial_r\left(\hal\dfrac{C(t)^2}{{\Rk}^2}-q\right)\left(\bp^4\Lambda_\kappa^2\zeta_j \ak_{j1}\right)\ak_{i1}\left(\bp^4\partial_t\zeta_i-\bp^4\psi^{\kappa}_i-\bp^4\Lambda_\kappa^2\zeta_j\ak_{j\ell}\partial_\ell \nu_i\right)\,dz\\&\quad+\int_{\bT}R_0C(t)^2\left[\bp^3, \dfrac{1}{{\Rk}^3}\right]\bp \Lambda_{\kappa}^2 R\ak_{i1}\mathcal{V}_i\,dz
\end{split}
\end{equation}
Note that
\begin{equation}
\begin{split}
&\int_{\mathbb T}R_0\partial_r\left(\hal\dfrac{C(t)^2}{{\Rk}^2}-q\right)\left(\bp^4\Lambda_\kappa^2\zeta_j \ak_{j1}\right)\ak_{i1}\bp^4\partial_t\zeta_i \,dz \\
 =&\int_{\bT}R_0\partial_r\left(\hal\dfrac{C(t)^2}{{\Rk}^2}-q\right)\ak_{j1}\bp^4 \Lambda_\kappa \zeta_j  \ak_{i1} \bp^4 \Lambda_\kappa \partial_t\zeta_i\,dz
 \\&\qquad+ \int_{\bT}\bp^4 \Lambda_\kappa \zeta_j \left[\Lambda_\kappa,  R_0\partial_r\left(\hal\dfrac{C(t)^2}{{\Rk}^2}-q\right) \ak_{j1}\ak_{i1}\right] \bp^4 \partial_t\zeta_i \,dz\\
=&\hal \frac{d}{dt}\int_{\bT}R_0\partial_r\left(\hal\dfrac{C(t)^2}{{\Rk}^2}-q\right)\abs{\ak_{i1}\bp^4 \Lambda_\kappa \zeta_i  }^2\,dz  -\hal \int_{\bT}R_0\dt\partial_r\left(\hal\dfrac{C(t)^2}{{\Rk}^2}-q\right) \abs{\ak_{i1}\bp^4 \Lambda_\kappa \zeta_i  }^2\,dz
\\&\qquad -\int_{\bT} R_0\partial_r\left(\hal\dfrac{C(t)^2}{{\Rk}^2}-q\right)  \ak_{j1}\bp^4 \Lambda_\kappa \zeta_j  \dt\ak_{i1} \bp^4  \Lambda_\kappa \zeta_i\,dz
\\&\qquad +\int_{\bT}\bp^4 \Lambda_\kappa \zeta_j \left[\Lambda_\kappa, R_0\partial_r\left(\hal\dfrac{C(t)^2}{{\Rk}^2}-q\right) \ak_{j1}\ak_{i1}\right] \bp^4 \partial_t\zeta_i\,dz .
\end{split}
\end{equation}
Therefore, we obtain
\begin{equation}
\label{gg3}
\begin{split}
& \int_{\mathbb T}R_0\partial_r\left(\hal\dfrac{C(t)^2}{{\Rk}^2}-q\right)\left(\bp^4\Lambda_\kappa^2\zeta_j \ak_{j1}\right)\ak_{i1}\mathcal{V}_i\,dz\\=&\hal \frac{d}{dt}\int_{\bT}R_0\partial_r\left(\hal\dfrac{C(t)^2}{{\Rk}^2}-q\right)\abs{\ak_{i1}\bp^4 \Lambda_\kappa \zeta_i  }^2\,dz  -\hal \int_{\bT}R_0\dt\partial_r\left(\hal\dfrac{C(t)^2}{{\Rk}^2}-q\right) \abs{\ak_{i1}\bp^4 \Lambda_\kappa \zeta_i  }^2\,dz
\\&\qquad -\int_{\bT} R_0\partial_r\left(\hal\dfrac{C(t)^2}{{\Rk}^2}-q\right)  \ak_{j1}\bp^4 \Lambda_\kappa \zeta_j  \dt\ak_{i1} \bp^4  \Lambda_\kappa \zeta_i\,dz
\\&\qquad +\int_{\bT}\bp^4 \Lambda_\kappa \zeta_j \left[\Lambda_\kappa, R_0\partial_r\left(\hal\dfrac{C(t)^2}{{\Rk}^2}-q\right) \ak_{j1}\ak_{i1}\right] \bp^4 \partial_t\zeta_i\,dz
 \\
=&\hal \frac{d}{dt}\int_{\bT}R_0\partial_r\left(\hal\dfrac{C(t)^2}{{\Rk}^2}-q\right)\abs{\ak_{i1}\bp^4 \Lambda_\kappa \zeta_i  }^2\,dz
-\underbrace{\hal \int_{\bT}R_0\dt\partial_r\left(\hal\dfrac{C(t)^2}{{\Rk}^2}-q\right) \abs{\ak_{i1}\bp^4 \Lambda_\kappa \zeta_i  }^2\,dz}_{\mathcal{I}_1}
\\&\qquad -\underbrace{\int_{\bT} R_0\partial_r\left(\hal\dfrac{C(t)^2}{{\Rk}^2}-q\right)  \ak_{j1}\bp^4 \Lambda_\kappa \zeta_j  \dt\ak_{i1} \bp^4  \Lambda_\kappa \zeta_i\,dz }_{\mathcal{I}_2}
\\&\qquad+\underbrace{\int_{\bT}\bp^4 \Lambda_\kappa \zeta_j \left[\Lambda_\kappa, R_0\partial_r\left(\hal\dfrac{C(t)^2}{{\Rk}^2}-q\right) \ak_{j1}\ak_{i1}\right] \bp^4 \partial_t\zeta_i\,dz}_{\mathcal{I}_3}
\\&\qquad -\underbrace{\int_{\bT}R_0\partial_r\left(\hal\dfrac{C(t)^2}{{\Rk}^2}-q\right)\ak_{j1}\bp^4 \Lambda_\kappa^2\zeta_j  \ak_{i1}  \bp^4 \Lambda_\kappa^2\zeta \cdot\nak \nu_i   }_{\mathcal{I}_4}
 \\&\qquad -\underbrace{\int_{\Gamma} R_0\partial_r\left(\hal\dfrac{C(t)^2}{{\Rk}^2}-q\right) \ak_{j1}\bp^4 \Lambda_\kappa^2\zeta_j  \ak_{i1}  \bp^4 \psi^\kappa_i  }_{\mathcal{I}_5}.
\end{split}
\end{equation}

We now estimate $\mathcal{I}_1$--$\i_5$. By the estimates \eqref{pressure}, we deduce
\begin{equation}
\label{i0}
\mathcal{I}_1 \ls \left(\abs{\partial_r\partial_t q}_{L^{\infty}}+\abs{\partial_r R}_{L^{\infty}}\right)\abs{\ak_{i1}\bp^4 \Lambda_\kappa \zeta_i  }^2\ls \left(\norm{\partial_tq}_3+\norm{R}_4\right)\abs{\ak_{i1}\bp^4 \Lambda_\kappa \zeta_i  }^2\le P\left(\sup_{t\in[0,T]} \mathfrak{E}^{\kappa}(t)\right).
\end{equation}
By the identity \eqref{partialF}, we have
\begin{align}
\nonumber \i_2&= \int_{\bT} R_0\partial_r\left(\hal\dfrac{C(t)^2}{{\Rk}^2}-q\right) \a^\kappa_{j1}\bp^4 \Lambda_\kappa \zeta_j   \a^{\kappa}_{i\ell}\pa_{a_\ell}\dt\zeta^{\kappa}_m\ak_{m1} \bp^4  \Lambda_\kappa \zeta_i
\\&= \underbrace{\int_{\bT} R_0\partial_r\left(\hal\dfrac{C(t)^2}{{\Rk}^2}-q\right) \a^\kappa_{j1}\bp^4 \Lambda_\kappa \zeta_j   \a^{\kappa}_{i1}\pa_r\dt\zeta^{\kappa}_m\a^\kappa_{m1} \bp^4  \Lambda_\kappa \zeta_i   }_{\i_{2a}}\nonumber
\\&\quad+\int_{\bT}  R_0\partial_r\left(\hal\dfrac{C(t)^2}{{\Rk}^2}-q\right) \a^\kappa_{j1}\bp^4 \Lambda_\kappa \zeta_j   \a^{\kappa}_{i2}\pa_z\dt\Lambda_{\kappa}^2\zeta_m\a^\kappa_{m1} \bp^4  \Lambda_\kappa \zeta_i.\label{tmp1}
\end{align}
As usual, we obtain
\begin{equation}
\label{1a}
\i_{2a} \ls\abs{\a^\kappa_{j1}\bp^4 \Lambda_\kappa \zeta_j  }_0^2\abs{R_0\partial_r\left(\hal\dfrac{C(t)^2}{{\Rk}^2}-q\right) \pa_r\dt\zeta^{\kappa}_m\a^\kappa_{m1}}_{L^{\infty}}\le P\left(\sup_{t\in[0,T]} \mathfrak{E}^{\kappa}(t)\right).
\end{equation}
On the other hand, using $\dt\zeta=\nu+\fk$ we have
\begin{align}
&\int_{\bT} R_0\partial_r\left(\hal\dfrac{C(t)^2}{{\Rk}^2}-q\right) \a^\kappa_{j1}\bp^4 \Lambda_\kappa \zeta_j   \nonumber \a^{\kappa}_{i2}\pa_z\dt\Lambda_{\kappa}^2\zeta_m\a^\kappa_{m1} \bp^4  \Lambda_\kappa \zeta_i
\\&\quad=\underbrace{\int_{\bT}  R_0\partial_r\left(\hal\dfrac{C(t)^2}{{\Rk}^2}-q\right) \a^\kappa_{j1}\bp^4 \Lambda_\kappa \zeta_j   \a^{\kappa}_{i2}\pa_z \Lambda_{\kappa}^2 \nu_m \a^\kappa_{m1} \bp^4  \Lambda_\kappa \zeta_i   }_{\i_{2b}}\nonumber
\\&\qquad+\underbrace{\int_{\bT}  R_0\partial_r\left(\hal\dfrac{C(t)^2}{{\Rk}^2}-q\right) \a^\kappa_{j1}\bp^4 \Lambda_\kappa \zeta_j   \a^{\kappa}_{i2}\pa_z \Lambda_{\kappa}^2 \psi^\kappa_m\a^\kappa_{m1} \bp^4  \Lambda_\kappa \zeta_i   }_{\i_{2c}}
\label{ttt}
\end{align}
To estimate $\i_{2c}$, the difficulty is that one can not have an $\kappa$-independent control of $\abs{\bp^4  \Lambda_\kappa \zeta_i }_0$. Our observation is that since $\fk\rightarrow 0$ as $\kappa\rightarrow 0$, this motives us to deduce the following estimates:
 \begin{equation}\label{lwuqing}
 \abs{\bp   \fk}_{L^{\infty} }\leq \sqrt\kappa P(\norm{\zeta}_4,\norm{\nu}_3).
 \end{equation}
Indeed, we can rewrite the boundary condition in \eqref{etaaa} as
\begin{align}
 \fk=\int_0^z\int_0^\tau\mathbb{P} f^\kappa,\ f^\kappa:= \bp^2 (\zeta_j-{\Lambda_{\kappa}^2\zeta_j})\a^{\kappa}_{j2}\partial_{z}{\Lambda_{\kappa}^2 v}-\bp^2{\Lambda_{\kappa}^2\zeta}_j\a^{\kappa}_{j2}\partial_{z}(\nu-{\Lambda_{\kappa}^2 \nu}) .
\end{align}
By using Morrey's inequality and the Sobolev embeddings and the trace theorem,
\begin{equation}
\abs{g-\Lambda_{\kappa}g}_{L^\infty}\ls \sqrt{\kappa}\abs{\bp g}_{L^4} \ls \sqrt{\kappa}\abs{g}_{1}\ls \sqrt{\kappa}\norm{g}_{2},
\end{equation}
we obtain
\begin{equation}
\begin{split}
\abs{f^\kappa}_{L^\infty}&\ls\abs{\a^{\kappa}_{j2}\partial_{z}\Lambda_{\kappa}^2 \nu}_{L^\infty}\abs{\bp^2\zeta-\Lambda_{\kappa}^2\bp^2\zeta}_{L^\infty}+ \abs{\bp^2{\Lambda_{\kappa}^2\zeta}_j\a^{\kappa}_{j2}}_{L^\infty}\abs{\partial_{z}\nu-\Lambda_{\kappa}^2 \partial_{z}\nu}_{L^\infty} \\&\ls\sqrt\kappa P(\norm{\zeta}_4,\norm{\nu}_3).
\end{split}
\end{equation}
Then by the elliptic estimate and the Sobolev embeddings, we deduce
 \begin{equation}
\abs{\bp \fk}_{L^{\infty} }\ls\abs{\bp \fk}_1\ls\abs{f^\kappa}_{0}\ls\abs{f^\kappa}_{L^\infty} \ls \sqrt\kappa P(\norm{\zeta}_4,\norm{\nu}_3),
 \end{equation}
 which proves \eqref{lwuqing}. Hence, by \eqref{lwuqing} together with \eqref{loss}, we have
\begin{equation}\label{22c}
\begin{split}
\i_{2c}&\ls \abs{\partial_r\left(\hal\dfrac{C(t)^2}{{\Rk}^2}-q\right)\a^\kappa_{m1}\a^{\kappa}_{i2}}_{L^{\infty} }\abs{\a^\kappa_{j1}\bp^4 \Lambda_\kappa \zeta_j }_0\abs{\bp^4 \Lambda_\kappa \zeta_i}_0\abs{\pa_z \Lambda_{\kappa}^2 \psi^\kappa_m}_{L^{\infty} }
\\ &\ls\abs{\partial_r\left(\hal\dfrac{C(t)^2}{{\Rk}^2}-q\right)\a^\kappa_{m1}\a^{\kappa}_{i2}}_{L^{\infty} }\abs{\a^\kappa_{j1}\bp^4 \Lambda_\kappa \zeta_j }_0\dfrac{1}{\sqrt{\kappa}}\abs{\zeta}_{7/2}\sqrt{\kappa}P(\norm{\zeta}_4,\norm{\nu}_3)
\\&\leq P\left(\sup_{t\in[0,T]} \mathfrak{E}^{\kappa}(t)\right).
\end{split}
\end{equation}
Note that the term $\i_{2b}$ is out of control by an $\kappa$-independent bound alone.

For $\i_3$, by the commutator estimates \eqref{es1-1/2}, \eqref{co123}, \eqref{pressure}, \eqref{tes1} and \eqref{fest1}, we obtain
\begin{align}
\nonumber
\i_3&\leq \abs{\bp^4 \Lambda_\kappa \zeta_j }_{-1/2}\abs{\left[\Lambda_\kappa, R_0\partial_r\left(\hal\dfrac{C(t)^2}{{\Rk}^2}-q\right)\a^\kappa_{j1}\a^{\kappa}_{i1}\right] (\bp^4 \partial_t\zeta_i)}_{1/2}
\\\nonumber &\ls \abs{\bp^3\Lambda_\kappa \zeta_j }_{1/2}\abs{R_0\partial_r\left(\hal\dfrac{C(t)^2}{{\Rk}^2}-q\right) \a^{\kappa}_{j1}\a^{\kappa}_{i1}}_{W^{1,\infty} }\abs{\bp^3\partial_t\zeta}_{1/2}\nonumber
\\&\ls \norm{\zeta}_{4}\norm{\partial_r\left(\hal\dfrac{C(t)^2}{{\Rk}^2}-q\right)\a^{\kappa}_{j1}\a^{\kappa}_{i1}}_3\norm{\nu+\psi^{\kappa}}_{4} \leq P\left(\sup_{t\in[0,T]} \mathfrak{E}^{\kappa}(t)\right).
\label{commm1}
\end{align}

To control $\i_4$, similarly as \eqref{tmp1}, we write
\begin{align}
\nonumber
\i_4=&\underbrace{\int_{\bT}R_0\partial_r\left(\hal\dfrac{C(t)^2}{{\Rk}^2}-q\right) \a^\kappa_{j1}\bp^4 \Lambda_\kappa^2\zeta_j  \a^{\kappa}_{m1}  \bp^4\Lambda_\kappa^2\zeta_{i}\a^{\kappa}_{i1}\partial_r \nu_m   }_{\i_{4a}}\\&+\underbrace{\int_{\bT}R_0\partial_r\left(\hal\dfrac{C(t)^2}{{\Rk}^2}-q\right) \a^\kappa_{j1}\bp^4 \Lambda_\kappa^2\zeta_j \a^{\kappa}_{i2} \partial_{z} \nu_m\a^{\kappa}_{m1} \bp^4\Lambda_\kappa^2\zeta_i  }_{\i_{4b}}.\label{ttttt}
\end{align}
By the commutator estimates \eqref{es1-0}, we have
\begin{equation}
\label{ennennene}
\begin{split}
\abs{\a^\kappa_{j1}\bp^4 \Lambda_\kappa^2\zeta_j }_0&\ls \abs{[\Lambda_{\kappa}, \a^\kappa_{j1}](\bp^4\Lambda_{\kappa}\eta_j)}_0+\abs{\a^\kappa_{j1}\bp^4(\Lambda_\kappa\zeta_j)}_0
\\&\ls\abs{\a^{\kappa}}_{W^{1,\infty} }\abs{\bp^3\Lambda_{\kappa}\zeta_j}_0+\abs{\a^\kappa_{j1}\bp^4(\Lambda_\kappa\zeta_j)}_0.
\end{split}
\end{equation}
Then we obtain
\begin{equation}\label{3b}
\i_{4a}\ls
\left(\abs{\a^\kappa_{j1}\bp^4 \Lambda_\kappa^2\zeta_j }_0^2+\abs{\ak}_{W^{1,\infty}}^2\norm{\zeta}_4^2\right)\abs{\partial_r\left(\hal\dfrac{C(t)^2}{{\Rk}^2}-q\right)\a^{\kappa}_{m1}\partial_r \nu_m}_{L^{\infty} }
 \leq P\left(\sup_{t\in[0,T]} \mathfrak{E}^{\kappa}(t)\right).
\end{equation}
Note that the term $\i_{4b}$ is also out of control by an $\kappa$-independent bound alone.

Now we take care of $\i_{2b}$ and $\i_{4b}$. Notice that $\mathcal{I}_{2b}$ and $\mathcal{I}_{4b}$ are cancelled out in the limit $\kappa\rightarrow0$, however, it is certainly not the case when $\kappa>0$. This is most involved thing in the tangential energy estimates. Note also that we can not use the commutator estimate to interchange the position of the mollifier operator $\Lambda_\kappa$ in each of two terms since  $\abs{\bp^4\zeta}_{L^\infty}$ is out of control.
The key point here is to use the term $\i_5$, by the definition of the modification term $\fk$, to kill out both $\i_{2b}$ and $\i_{4b}$; this is exactly the reason that we have introduced $\fk$. By the boundary condition in \eqref{etaaa}, we deduce
\begin{equation}
\begin{split}
\i_5&=\int_{\bT}R_0\partial_r\left(\hal\dfrac{C(t)^2}{{\Rk}^2}-q\right) \a^\kappa_{j1}\bp^4 \Lambda_\kappa^2\zeta_j  \a^{\kappa}_{i1}  \bp^2 \left( \bp^2\zeta_m \a^\kappa_{m2}\pa_z{\Lambda_{\kappa}^2 \nu_i}-\bp^2\Lambda_{\kappa}^2\zeta_m \a^\kappa_{m2}\pa_z{ \nu_i}\right)
\\&=\int_{\bT} R_0\partial_r\left(\hal\dfrac{C(t)^2}{{\Rk}^2}-q\right)   \a^\kappa_{j1}\bp^4 \Lambda_\kappa^2\zeta_j  \a^{\kappa}_{i1}    \bp^4\zeta_m \a^\kappa_{m2}\pa_z{\Lambda_{\kappa}^2 \nu_i}
\\&\quad+\underbrace{\int_{\bT} -R_0\partial_r\left(\hal\dfrac{C(t)^2}{{\Rk}^2}-q\right)  \a^\kappa_{j1}\bp^4 \Lambda_\kappa^2\zeta_j  \a^{\kappa}_{i1}
\bp^4\Lambda_{\kappa}^2\zeta_m \a^\kappa_{m2}\pa_z{\nu_i} }_{-\i_{4b}}
\\&\quad+\underbrace{\int_{\Gamma} R_0\partial_r\left(\hal\dfrac{C(t)^2}{{\Rk}^2}-q\right)  \a^\kappa_{j1}\bp^4 \Lambda_\kappa^2\zeta_j  \a^{\kappa}_{i1}  \left( \left[\bp^2, \a^\kappa_{m2}\pa_z{\Lambda_{\kappa}^2 \nu_i}\right]\bp^2\zeta_m -\left[\bp^2,\a^\kappa_{m2}\pa_z{ \nu_i}\right] \bp^2\Lambda_{\kappa}^2\zeta_m \right)  }_{\i_{5a}}
\label{tmp2}
\end{split}
\end{equation}
By doing estimates as usual and using \eqref{ennennene} again, we have
\begin{equation}
\label{2a}
\begin{split}
\i_{5a} &\ls |\partial_r\left(\hal\dfrac{C(t)^2}{{\Rk}^2}-q\right)\a^{\kappa}_{i1}|_{L^{\infty}}\abs{\a^\kappa_{j1}\bp^4 \Lambda_\kappa^2\zeta_j }_0\abs{\left(\left[\bp^2, \a^\kappa_{m2}\pa_z{\Lambda_{\kappa}^2 \nu_i}\right]\bp^2\zeta_m -\left[\bp^2,\a^\kappa_{m2}\pa_z{ \nu_i}\right] \bp^2\Lambda_{\kappa}^2\zeta_m \right)}_0
\\
&\le P\left(\sup_{t\in[0,T]} \mathfrak{E}^{\kappa}(t)\right).
\end{split}
\end{equation}
We rewrite the first term as
\begin{align}
\nonumber&\int_{\bT} R_0\partial_r\left(\hal\dfrac{C(t)^2}{{\Rk}^2}-q\right)   \a^\kappa_{j1}\bp^4 \Lambda_\kappa^2\zeta_j  \a^{\kappa}_{i1}    \bp^4\zeta_m \a^\kappa_{m2}\pa_z{\Lambda_{\kappa}^2 \nu_i}
\\\nonumber&\quad=\underbrace{\int_{\bT} R_0\partial_r\left(\hal\dfrac{C(t)^2}{{\Rk}^2}-q\right)  \a^\kappa_{j1}\bp^4 \Lambda_\kappa \zeta_j  \a^{\kappa}_{i1}    \bp^4 \Lambda_\kappa \zeta_m \a^\kappa_{m2}\pa_z{\Lambda_{\kappa}^2 \nu_i}}_{-\i_{2b}}
\\ &\qquad+\underbrace{\int_{\bT}\bp^4 \Lambda_\kappa \zeta_j \left[\Lambda_\kappa,  R_0\partial_r\left(\hal\dfrac{C(t)^2}{{\Rk}^2}-q\right) \a^\kappa_{j1}\a^{\kappa}_{i1} \a^\kappa_{m2}\pa_z{\Lambda_{\kappa}^2 \nu_i}\right]\bp^4\zeta_m   }_{\i_{5b}}.\label{temp3}
\end{align}
By arguing similarly as \eqref{commm1} for $\i_3$, we have
\begin{equation}
\label{pw}
\i_{5b}\le \norm{\zeta}_{4}\norm{\partial_r\left(\hal\dfrac{C(t)^2}{{\Rk}^2}-q\right)\a^\kappa_{j1}\a^{\kappa}_{i1} \a^\kappa_{m2}\pa_z{\Lambda_{\kappa}^2 \nu_i}}_3\norm{\zeta}_{4}\leq P\left(\sup_{t\in[0,T]} \mathfrak{E}^{\kappa}(t)\right).
\end{equation}

Now combining \eqref{tmp1}, \eqref{ttt}, \eqref{ttttt}, \eqref{tmp2} and \eqref{temp3},  and using the estimates \eqref{1a}, \eqref{22c}, \eqref{3b}, \eqref{2a} and \eqref{pw}, we deduce
\begin{equation}\label{ggiip}
\i_2+\i_4+\i_5=\i_{2a}+\i_{2c}+\i_{4a}+\i_{5a}+\i_{5b}\le P\left(\sup_{t\in[0,T]} \mathfrak{E}^{\kappa}(t)\right).
\end{equation}

Finally, combining \eqref{gg1}, \eqref{gg2} and \eqref{gg3}, and using the estimates \eqref{ggg0}, \eqref{gggg3}, \eqref{commm1} and \eqref{ggiip}, we obtain
\begin{equation}\label{ohoh}
 \dfrac{d}{dt}\left(\int_{\Omega}\abs{\mathcal{V}}^2+\abs{\bp^4( b_0\cdot\nabla\zeta)}^2 + \int_{\Gamma}\partial_r\left(\hal\dfrac{C(t)^2}{{\Rk}^2}-q\right)\abs{\bp^4 \Lambda_\kappa \zeta_i \a^\kappa_{i1} }^2   \right)\leq  P\left(\sup_{t\in[0,T]} \mathfrak{E}^{\kappa}(t)\right).
\end{equation}
Integrating \eqref{ohoh} directly in time, by the a priori assumption \eqref{ini2}, we have
\begin{equation}\label{ohoh1}
\norm{\mathcal V(t)}_0^2+\norm{\bp^4 (b_0\cdot\nabla \zeta)(t)}_0^2+\abs{\bp^4\Lambda_{\kappa}\zeta_i \a^{\kappa}_{i1}(t)}_0^2\leq M_0+TP\left(\sup_{t\in[0,T]} \mathfrak{E}^{\kappa}(t)\right).
\end{equation}
By the definition of $\mathcal{V}$, using \eqref{ohoh1} and  \eqref{etaest}, using the fundamental theorem of calculous, we get
\begin{equation}
\begin{split}
\norm{\bp^4 \nu(t)}_0^2&\ls \norm{\bp^4 \nu(t)}_0^2\ls\norm{\mathcal V(t)}_0^2+\norm{\bp^4\zeta}_0^2\norm{\a^{\kappa}_{j\ell}\partial_{a_\ell} \nu}_{L^{\infty}}^2
\\&\leq M_0+TP\left(\sup_{t\in[0,T]} \mathfrak{E}^{\kappa}(t)\right).
\end{split}
\end{equation}
We thus conclude the proposition.
\end{proof}
\subsection{Normal estimates for $\nu=(v^r, v^z)$}\label{curle}
In this subsection, we control the normal derivatives of $\nu=(v^r, v^z)$ by using the equaton \eqref{divValpha}, and the curl equation
\begin{equation}
	\begin{split}
	&	\partial_t(\pa^{\ak}_Z v^r-\pa^{\ak}_R v^z)-b_0\cdot\nabla\left(\pa^{\ak}_Z(b_0\cdot\nabla R)-\pa^{\ak}_R(b_0\cdot\nabla Z)\right)\\&=\left[\pa^{\ak}_Z, b_0\cdot\nabla\right](b_0\cdot\nabla R)-\left[\pa^{\ak}_R, b_0\cdot\nabla\right](b_0\cdot\nabla Z)+\left[\partial_t,\pa^{\ak}_Z\right] v^r-\left[\partial_t,\pa^{\ak}_R\right] v^z
\end{split}
	\label{curlequ}
\end{equation}
We denote $\curl \nu:=\partial_z v^r-\partial_r v^z, \curl b:=\partial_z(b_0\cdot\nabla R)-\partial_r(b_0\cdot\nabla Z)$, $\curl_{\ak}\nu:=\partial_Z^{\ak}v^r-\partial_R^{\ak}v^z$, $\curl_{\ak} b:=\partial_Z^{\ak}(b_0\cdot\nabla R)-\partial_R^{\ak}(b_0\cdot\nabla Z)$ and begin with the energy estimates for \eqref{curlequ}.
\begin{proposition}
	\label{curlprop}
For $t\in [0,T]$ with $T\le T_\kappa$, it holds that
\begin{equation}
\label{curlest}
\norm{\curl v(t)}_{3}^2+\norm{\curl b(t)}_{3}^2\leq M_0+CTP\left(\sup_{t\in[0,T]} \mathfrak{E}^{\kappa}(t)\right).
\end{equation}
\end{proposition}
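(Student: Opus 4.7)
The plan is to perform a weighted $H^3$ energy estimate directly on the curl equation \eqref{curlequ}, exploiting the symmetric transport-type coupling between $\curl_{\ak}\nu$ and $\curl_{\ak} b$, and then trading $\curl_{\ak}$ for the flat $\curl$ at the end using that $\ak$ is close to the identity via \eqref{inin3}.

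First I would apply $D^\alpha$ with $|\alpha|\le 3$ to \eqref{curlequ} to obtain
\begin{equation*}
\partial_t D^\alpha\curl_{\ak}\nu - b_0\cdot\nabla (D^\alpha\curl_{\ak} b) = D^\alpha\mathcal R + [b_0\cdot\nabla,D^\alpha]\curl_{\ak} b,
\end{equation*}
where $\mathcal R$ is the right-hand side of \eqref{curlequ} (all of which involves only commutators $[\partial^{\ak},b_0\cdot\nabla]$ and $[\partial_t,\partial^{\ak}]$ acting on first-order derivatives of $\nu$ and $b_0\cdot\nabla\zeta$). By the product/commutator estimates \eqref{co0}--\eqref{co2} together with the pressure bound \eqref{pressure} and the identity \eqref{partialF}, each piece of $D^\alpha\mathcal R$ is controlled by $P(\mathfrak E^\kappa)$, since the highest-order derivatives land on either $\zeta$ or $\nu$ which are already part of $\mathfrak E^\kappa$, while the coefficients (built from $\ak$, $b_0$, $\partial_t\ak$) carry at most $\lfloor k/2\rfloor+2$ derivatives.

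Next I would take the $L^2(\Omega)$ inner product with $D^\alpha\curl_{\ak}\nu$. The first term gives $\tfrac12\tfrac{d}{dt}\|D^\alpha\curl_{\ak}\nu\|_0^2$. For the $b_0\cdot\nabla$ term, I integrate by parts in $\Omega$: since $b_0^r=0$ on $\Gamma$ and $\partial_r b_0^r+\tfrac1r b_0^r+\partial_z b_0^z=0$ (condition \eqref{bcond}), no boundary contribution appears and the bulk term becomes
\begin{equation*}
\int_\Omega D^\alpha\curl_{\ak} b \cdot b_0\cdot\nabla D^\alpha\curl_{\ak}\nu\,dx.
\end{equation*}
The crucial observation is that differentiating $\curl_{\ak} b$ in time yields, after commuting $\partial_t$ past $\partial^{\ak}$ and using $\partial_t(b_0\cdot\nabla\zeta)=b_0\cdot\nabla\nu+b_0\cdot\nabla\psi^\kappa$, the identity
\begin{equation*}
\partial_t\curl_{\ak} b = b_0\cdot\nabla\curl_{\ak}\nu + \mathcal S,
\end{equation*}
where $\mathcal S$ is a sum of commutator terms and $\psi^\kappa$-corrections, all bounded in $H^3$ by $P(\mathfrak E^\kappa)$ using Lemma \ref{preest} (in particular \eqref{fest22}) and the assumption \eqref{inin3}. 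Substituting, the coupling term becomes $\tfrac12\tfrac{d}{dt}\|D^\alpha\curl_{\ak} b\|_0^2$ plus a remainder controlled by $P(\mathfrak E^\kappa)$.

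Summing over $|\alpha|\le 3$ produces
\begin{equation*}
\tfrac{d}{dt}\bigl(\|\curl_{\ak}\nu\|_3^2+\|\curl_{\ak} b\|_3^2\bigr)\le P\!\left(\sup_{[0,T]}\mathfrak E^\kappa(t)\right),
\end{equation*}
and integrating in time against the initial bound $M_0$ yields the same estimate for the $\ak$-curl. Finally, writing $\curl\nu=\curl_{\ak}\nu+(\delta_{ij}-\ak_{ij})\partial_{a_j}\nu^i$ and similarly for $b$, the a priori smallness \eqref{inin3} together with the product estimate \eqref{co0} absorbs the difference into $TP(\sup\mathfrak E^\kappa)$, giving \eqref{curlest}. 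The main obstacle is verifying that the $\psi^\kappa$-corrections arising from $\partial_t\zeta=\nu+\psi^\kappa$ (and the commutators $[b_0\cdot\nabla,D^\alpha]$, $[\partial^{\ak},b_0\cdot\nabla]$) do not introduce uncontrolled top-order terms; this is where the elliptic estimates of Lemma \ref{preest}, in particular \eqref{fest22}, are essential.
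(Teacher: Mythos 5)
Your proof follows essentially the same route as the paper's: apply $D^3$ to the curl equation \eqref{curlequ}, test against $D^3\curl_{\ak}\nu$, integrate by parts (using \eqref{bcond} to kill boundary terms), use $\nu=\partial_t\zeta-\psi^\kappa$ to generate the time derivative of $\|D^3\curl_{\ak}b\|_0^2$, control the remaining commutator and $\psi^\kappa$-corrections via \eqref{co0}--\eqref{co2} and Lemma \ref{preest}, integrate in time, and finally trade $\curl_{\ak}$ for $\curl$.

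The one step where your reasoning is off is the final trade. You write $\curl\nu=\curl_{\ak}\nu+(\delta_{ij}-\ak_{ij})\partial_{a_j}\nu^i$ and claim the a priori smallness $|\ak-\delta|\le\tfrac18$ from \eqref{inin3} makes the correction $O(T)$. Smallness alone cannot produce a factor of $T$: it would merely bound the difference by $C\|\nu\|_4^2\le CP(\sup\mathfrak E^\kappa)$, which would spoil the form $M_0+CTP(\cdot)$. What is actually needed is the fundamental theorem of calculus, $\delta_{ij}-\ak_{ij}(t)=-\int_0^t\partial_t\ak_{ij}\,d\tau$, so that the correction carries a genuine time integral and is bounded by
\begin{equation*}
\left\|\left(\int_0^t\partial_t\ak\,d\tau\right)D\nu(t)\right\|_3^2\le T\,P\!\left(\sup_{[0,T]}\mathfrak E^\kappa\right),
\end{equation*}
which is exactly how the paper closes this step. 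With that replacement your argument is complete and identical in structure to the paper's.
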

\begin{proof}
Apply $D^3$ to \eqref{curlequ} to get
\begin{equation}\label{curl2}
	\partial_t(D^3(\curl_{\ak} v))-b_0\cdot\nabla\left(D^3\left(\curl_{\ak} b\right)\right)=F,
\end{equation}
with
\begin{equation*}
	\begin{split}
	F:=&[D^3,b_0\cdot\nabla]\left(\curl_{\ak} b\right)\\&+D^3\left([\pa^{\ak}_Z, b_0\cdot\nabla](b_0\cdot\nabla R)-[\pa^{\ak}_R, b_0\cdot\nabla](b_0\cdot\nabla Z)+[\partial_t,\pa^{\ak}_Z] v^r-[\partial_t,\pa^{\ak}_R] v^z\right).
\end{split}
\end{equation*}
Taking the $L^2$ inner product of \eqref{curl2} with $D^3(\pa^{\ak}_Z v^r-\pa^{\ak}_R v^z)$, by the integration by parts, we get
\begin{align}\label{j00}
	&\dfrac{1}{2}\dfrac{d}{dt}\int_{\Omega}\abs{D^3(\curl_{\ak} v)}^2\,dx+\underbrace{\int_{\Omega}D^3(\curl_{\ak}b) D^3(\pa^{\ak}_Z (b_0\cdot\nabla v^r)-\pa^{\ak}_R(b_0\cdot\nabla v^z))}_{\mathcal{J}_1}\\&\quad=\underbrace{\int_\Omega F D^3(\pa^{\ak}_Z v^r-\pa^{\ak}_R v^z)}_{\j_2}+\underbrace{\int_{\Omega}D^{3}(\curl_{\ak}b)(\left[D^3\pa^{\ak}_Z, b_0\cdot\nabla\right] v^r-\left[D^3\pa^{\ak}_Z, b_0\cdot\nabla\right]v^z)}_{\j_3}.\nonumber
\end{align}
Since $\nu=\dt\zeta-\fk$, we have,
\begin{equation}
\label{j0}
\begin{split}
	\j_1=&\dfrac{1}{2}\dfrac{d}{dt}\int_{\Omega}\abs{D^3\left(\curl_{\ak}b\right)}^2\,dx
	-\underbrace{\dfrac{1}{2}\int_{\Omega}D^3\left(\curl_{\ak}b\right)D^3\left(\partial_t\ak D(b_0\cdot\nabla \zeta)\right)}_{\j_{1a}}\\&-\underbrace{\int_{\Omega}D^3\curl_{\a^{\kappa}}b D^3\left(\pa_Z^{\ak}(b_0\cdot\nabla\fk_1)-\pa_R^{\ak}(b_0\cdot\nabla\fk_2)\right)}_{\j_{1b}}.
\end{split}
\end{equation}
By the identity \eqref{partialF} and the estimates \eqref{tes0} and \eqref{fest1}, we have
\begin{equation*}
\j_{1a}
\ls  \norm{\ak}_3\norm{D(b_0\cdot\nabla \zeta)}_3\norm{\partial_t\ak}_3\norm{D(b_0\cdot\nabla \zeta)}_3
\le CP\left(\sup_{t\in[0,T]} \mathfrak{E}^{\kappa}(t)\right).
\end{equation*}
By the estimates \eqref{fest22} and \eqref{tes1}, we obtain
\begin{equation}
\label{j0b}
\j_{1b}\ls \norm{b_0\cdot\nabla\zeta}_4\norm{\a^{\kappa}}_3^2\norm{b_0\cdot\nabla\fk}_4\leq P\left(\sup_{t\in[0,T]} \mathfrak{E}^{\kappa}(t)\right).
\end{equation}
Hence, we arrive at
\begin{equation}
\label{j1}
\j_1\ge \dfrac{1}{2}\dfrac{d}{dt}\int_{\Omega}\abs{D^3\left(\curl_{\ak}b\right)}^2
-CP\left(\sup_{t\in[0,T]} \mathfrak{E}^{\kappa}(t)\right).
\end{equation}
We now turn to estimate the right hand side of \eqref{j00}. By the identity \eqref{partialF}, we may have
\begin{equation}
\label{j2}
\begin{split}
	\j_2\leq  \norm{F}_0 \norm{\curl_{\ak}v}_3
	\leq CP\left(\sup_{t\in[0,T]} \mathfrak{E}^{\kappa}(t)\right).
\end{split}
\end{equation}
Similarly,
\begin{equation}
	\label{j3}
	\begin{split}
	\j_3&\ls \norm{D^{3}(\curl_{\ak}b))}_0\norm{\left[D^3\pa^{\ak}_Z, b_0\cdot\nabla\right] v^r-\left[D^3\pa^{\ak}_Z, b_0\cdot\nabla\right]v^z}_0\\&
\leq CP\left(\sup_{t\in[0,T]} \mathfrak{E}^{\kappa}(t)\right).
\end{split}
\end{equation}
Consequently, plugging the estimates \eqref{j1}--\eqref{j3} into \eqref{j00}, we obtain
\begin{equation} \label{kllj}
	\dfrac{d}{dt}\int_{\Omega}\abs{D^3(\curl_{\ak}v)}^2+\abs{D^3\left(\curl_{\ak}b\right)}^2\,dx
		\le CP\left(\sup_{t\in[0,T]} \mathfrak{E}^{\kappa}(t)\right).
\end{equation}
Integrating \eqref{kllj} directly in time, and applying the fundamental theorem of calculous,
\begin{equation}
	\norm{\curl  f(t)}_3\le \norm{\curl_{\ak}f(t)}_3+\norm{\int_0^t\partial_t\ak d\tau D f(t)}_3,
\end{equation}
we then conclude the proposition.
\end{proof}
We now derive the divergence estimates. We denote $\Div v:=\partial_rv^r+\dfrac{1}{r}v^r+\partial_zv^z, \Div b:=\partial_r(b_0\cdot\nabla R)+\dfrac{1}{r}(b_0\cdot\nabla R)+\partial_z(b_0\cdot\nabla Z)$.
\begin{proposition}
	\label{divprop}
For $t\in [0,T]$ with $T$, it holds that
\begin{equation}
\label{divest}
\norm{\Div \nu(t)}_3^2+\norm{\Div b(t)}_3^2 \leq TP\left(\sup_{t\in[0,T]} \mathfrak{E}^{\kappa}(t)\right).
\end{equation}
\end{proposition}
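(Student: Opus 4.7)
The strategy is to observe that both $\Div\nu$ and $\Div b$ vanish identically at $t=0$, and then to gain the factor $T$ on the right-hand side via the fundamental theorem of calculus in time, provided the appropriate time derivatives admit $\kappa$-independent $H^3$ bounds by $P(\mathfrak E^\kappa)$.

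For $\Div\nu$, I would exploit the identity $\Div_{\ak}\nu \equiv 0$ from the third line of \eqref{sub1} to write
\[
\Div\nu = \Div\nu - \Div_{\ak}\nu = (\delta_{ij}-\ak_{ij})\pa_{a_j}\nu_i + v^r\Big(\tfrac{1}{r}-\tfrac{1}{\Rk}\Big).
\]
At $t=0$ we have $\phi^\kappa=0$ (since the boundary datum $\Lambda_\kappa^2\zeta-\zeta$ in \eqref{etadef} vanishes for $\zeta=(r,z)$, because $\Lambda_\kappa^2$ leaves $(R_0,z)$ invariant on $\Gamma$), hence $\ak=I$ and $\Rk=r$, so both terms vanish. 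Writing $I-\ak=-\int_0^t\pa_t\ak\,d\tau$ and invoking \eqref{partialF}, \eqref{tes0} and the algebra property of $H^3_{r,z}$, the first piece has $H^3$-norm at most $TP(\mathfrak E^\kappa)$. For the singular second piece I decompose $\Rk-r=(R-r)+\phi^\kappa_1$; by axial symmetry the axis is invariant under the flow, giving $R(0,z,t)=0$, while $\phi^\kappa(0,z,t)=0$ by construction, so both summands vanish at $r=0$. Hence $(\Rk-r)/r$ is controlled via the higher-order Hardy inequality \eqref{hardy}, and since $\pa_t(\Rk-r)=v^r+\pa_t\phi^\kappa_1$ is bounded in $H^4$ by $P(\mathfrak E^\kappa)$ through Lemma \ref{preest}, a further time integration produces the required $TP(\mathfrak E^\kappa)$ bound.

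For $\Div b$, at $t=0$ we have $b_0\cdot\nabla R|_{t=0}=b_0^r$ and $b_0\cdot\nabla Z|_{t=0}=b_0^z$, so $\Div b|_{t=0}=\pa_r b_0^r+b_0^r/r+\pa_z b_0^z=0$ by the constraint \eqref{bcond}. Using $\pa_t\zeta=\nu+\fk$ and commuting $\pa_t$ past $b_0\cdot\nabla$,
\[
\pa_t\Div b = b_0\cdot\nabla\Div(\nu+\fk) + \bigl[\Div,\; b_0\cdot\nabla\bigr](\nu+\fk).
\]
The commutator is zeroth-order in $b_0\cdot\nabla$ with coefficients built from $Db_0$, so its $H^3$-norm is controlled by products of $b_0,\,Db_0,\,\nu,\,\fk$ in $H^4\times H^3$ via \eqref{co1}. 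The main term is bounded in $H^3$ by $\norm{b_0}_4(\norm{\Div\nu}_3+\norm{\Div\fk}_3)$; the first summand is already small by the previous step and $\norm{\Div\fk}_3$ is controlled through the elliptic regularity of $\fk$ established in Lemma \ref{preest} (using \eqref{fest1}). Integrating in time yields $\norm{\Div b(t)}_3^2\leq TP(\sup_{[0,T]}\mathfrak E^\kappa)$.

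The main technical obstacle is the interplay between the axial singularity $1/r$ and the Lagrangian difference $1/r-1/\Rk$ in the divergence identity: both the smallness of $\Rk-r$ (from the time integration) and its vanishing at $r=0$ (from axial symmetry together with the boundary condition imposed in \eqref{etadef}) must be used simultaneously, the latter via the higher-order Hardy inequality \eqref{hardy}. Once this is correctly accounted for, the remaining estimates are straightforward products in the algebra $H^3_{r,z}$ controlled by the energy $\mathfrak E^\kappa$.
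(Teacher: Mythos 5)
Your treatment of $\norm{\Div\nu}_3$ is essentially the paper's: both rewrite $\Div\nu=\Div\nu-\Div_{\ak}\nu$, use $\ak(0)=I$ and $\Rk(0)=r$ to see the expression vanishes initially, and then combine a time integral (via the factored form $\delta_{ij}-\ak_{ij}=-\int_0^t\partial_t\ak_{ij}$, cf.\ \eqref{fpe}) with the higher-order Hardy inequality for the piece involving $1/r-1/\Rk$. That part is fine.

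The $\Div b$ estimate has a genuine derivative-loss gap. You decompose
\[
\partial_t\Div b = b_0\cdot\nabla\Div(\nu+\fk) + [\Div,\,b_0\cdot\nabla](\nu+\fk)
\]
and claim the first term is bounded in $H^3$ by $\norm{b_0}_4\bigl(\norm{\Div\nu}_3+\norm{\Div\fk}_3\bigr)$. But $b_0\cdot\nabla$ is a first-order differential operator, so the correct product estimate is $\norm{b_0\cdot\nabla g}_3\lesssim\norm{b_0}_3\norm{g}_4$, not $\norm{b_0}_4\norm{g}_3$. Since $\nu\in H^4$ and $\fk\in H^4$ only, one has $\Div\nu,\Div\fk\in H^3$ but \emph{not} $H^4$, so $b_0\cdot\nabla\Div\nu$ and $b_0\cdot\nabla\Div\fk$ sit in $H^2$ only and cannot be closed in $H^3$. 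The smallness in time you invoke (``already small by the previous step'') is irrelevant: the issue is regularity, not size. The paper sidesteps this entirely by working with $\Div_{\ak}(b_0\cdot\nabla\zeta)$ instead of $\Div b$ and exploiting the \emph{exact} constraint $\Div_{\ak}\nu=0$. Differentiating $\Div_{\ak}\nu=0$ by $b_0\cdot\nabla$ converts $\Div_{\ak}(b_0\cdot\nabla\nu)$ into the pure commutator $[\Div_{\ak},b_0\cdot\nabla]\nu$, which is \emph{zeroth order} in $b_0\cdot\nabla$ acting on $\nu$ (only one spatial derivative of $\nu$ appears, with coefficients built from $Db_0$, $\ak$ and $b_0\cdot\nabla\ak$); moreover $b_0\cdot\nabla\ak$ stays in $H^3$ because $b_0\cdot\nabla\zeta^\kappa\in H^4$ by \eqref{tes2}, which is precisely the extra tangential regularity recorded in the energy $\mathfrak E^\kappa$. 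This is what makes the right-hand side of the evolution equation \eqref{divb} close in $H^3$. Your flat $\Div$ decomposition leaves the uncancelled transport term $b_0\cdot\nabla\Div\nu$ behind, and there is no analogue of the extra regularity of $b_0\cdot\nabla\zeta$ available for $\nu$ itself. To repair the argument you should replace the decomposition by the paper's: compute $\partial_t\bigl(\Div_{\ak}(b_0\cdot\nabla\zeta)\bigr)$, use $\Div_{\ak}\nu=0$ to reduce to a commutator plus lower-order terms (as in \eqref{divb}), integrate in time from the initial value $\Div(b_0^r,b_0^z)=0$, and only afterwards pass from $\Div_{\ak}(b_0\cdot\nabla\zeta)$ to $\Div b$ via the same lower-order difference used for $\nu$.
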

\begin{proof}
	From the third equation of \eqref{sub1} and $\ak|_{t=0}=I$, we see that
\begin{equation}
	\label{fpe}
	\partial_r v^r+\dfrac{1}{r}v^r+\partial_z v^z=-\int_0^t\partial_t\ak_{ij}\,d\tau\partial_{a_j}\nu_i
	+\int_0^t\dfrac{r\partial_t{\Rk}}{{\Rk}^2}\,d\tau\dfrac{v^r}{r}.
\end{equation}
Hence, it is clear that by the identity \eqref{partialF}, and the estimates \eqref{tes1} and \eqref{tes0},
\begin{equation}\label{divv1}
\norm{\partial_r v^r+\dfrac{1}{r}v^r+\partial_z v^z}_{3}^2\le TP\left(\sup_{t\in[0,T]} \mathfrak{E}^{\kappa}(t)\right).
\end{equation}
From the third equation of \eqref{sub1} again, we have
\begin{equation*}
	\Div_{\ak}b_0\cdot\nabla \nu=\left[\Div_{\ak},b_0\cdot\nabla\right]\nu.
\end{equation*}
This together with the equation $\nu=\partial_t \zeta-\psi^\kappa$, we have
\begin{equation}
\label{divb}
\begin{split}
	\partial_t\left(\Div_{\ak}(b_0\cdot\nabla \zeta)\right)=&\Div_{\ak}
(b_0\cdot\nabla\fk)+\left[\Div_{\ak},b_0\cdot\nabla\right]\nu+ \partial_t \ak_{i\ell} \partial_{a_\ell}\left(b_0\cdot\nabla \zeta_i\right)+\partial_t\dfrac{1}{{\Rk}}b_0\cdot\nabla R.
\end{split}
\end{equation}
This implies that, by doing the $D^3$ energy estimate and using the estimates \eqref{tes1}--\eqref{fest22} and the identity \eqref{partialF},
\begin{equation}\label{di2}
\norm{\Div_{\ak}(b_0\cdot\nabla \zeta)}_{3}^2
\leq  CTP\left(\sup_{t\in[0,T]} \mathfrak{E}^{\kappa}(t)\right).
\end{equation}
And then applying the fundamental theorem of calculous, and a similar equality as \eqref{fpe}, we arrive at
\begin{equation}\label{divv12}
\norm{\Div b}_{3}^2
\leq  TP\left(\sup_{t\in[0,T]} \mathfrak{E}^{\kappa}(t)\right).
\end{equation}

Consequently, we conclude the proposition by the estimates \eqref{divv1} and \eqref{divv12}.
\end{proof}
Now we show how to get normal derivatives of $\nu=(v^r, v^z)$ and $b_0\cdot\nabla\zeta$ by using Proposition \ref{curlprop} and Proposition \ref{divprop}:
\begin{proposition}\label{nore}
For $t\in [0,T]$, it holds that
\begin{equation}
\label{teedde}
\norm{D^4 \nu (t)}_0^2+\norm{D^4(b_0\cdot\nabla \zeta)(t)}_0^2\leq M_0+CTP\left(\sup_{t\in[0,T]}\mathfrak{E}^{\kappa}(t)\right).
\end{equation}
\end{proposition}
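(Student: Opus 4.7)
The plan is to combine the tangential estimate of Proposition \ref{te} with the curl estimate of Proposition \ref{curlprop} and the divergence estimate of Proposition \ref{divprop} via a Hodge-type decomposition adapted to the axially symmetric cylindrical coordinates. For any axially symmetric vector field $w = (w^r, w^z)$ one has the pointwise identities
\begin{equation*}
\partial_r w^r = \Div w - \frac{w^r}{r} - \partial_z w^z, \qquad \partial_r w^z = \partial_z w^r - \curl w,
\end{equation*}
which allow trading one normal derivative $\partial_r$ for either a tangential derivative $\partial_z$ or a controlled source ($\Div w$ or $\curl w$), up to the singular factor $w^r/r$.

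I would induct on the number $a$ of $\partial_r$ derivatives in $\partial_r^a \partial_z^b$ with $a+b=4$. The base case $a=0$ is exactly Proposition \ref{te}. For $a \geq 1$, applying $\partial_r^{a-1}\partial_z^b$ to the two identities above reduces $\partial_r^a\partial_z^b v^r$ and $\partial_r^a\partial_z^b v^z$ to a sum involving at most $a-1$ normal derivatives, plus $\partial_r^{a-1}\partial_z^b(\Div \nu)$, $\partial_r^{a-1}\partial_z^b(\curl \nu)$, and the singular $\partial_r^{a-1}\partial_z^b(v^r/r)$. The first two are controlled by Propositions \ref{divprop} and \ref{curlprop}, and the singular term is handled by the higher-order Hardy inequality of Lemma \ref{hardy}, using the fact that $v^r(0,z,t)=0$ is propagated by the flow from the initial condition $v^r_0(0,z)=0$. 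The identical scheme is applied to $b_0 \cdot \nabla\zeta = (b_0\cdot\nabla R, b_0\cdot\nabla Z)$ with the identities
\begin{equation*}
\partial_r (b_0\cdot\nabla R) = \Div b - \frac{b_0\cdot\nabla R}{r} - \partial_z (b_0\cdot\nabla Z), \qquad \partial_r(b_0\cdot\nabla Z) = \partial_z(b_0\cdot\nabla R)-\curl b,
\end{equation*}
and using $b_0^r=0$ on the axis from \eqref{bcond} to legitimize Hardy on $(b_0\cdot\nabla R)/r$.

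The main obstacle I anticipate is the bookkeeping of the singular $1/r$ factors when $\partial_r$ hits $w^r/r$ repeatedly: one must verify at each inductive step that the Hardy inequality applies to the appropriate intermediate quantity and that the product/commutator estimates \eqref{co0}--\eqref{co2} absorb the lower-order terms into $P(\mathfrak{E}^{\kappa})$. Once that bookkeeping is settled, summing over all multi-indices $|\alpha|\le 4$ produces a bound of the schematic form
\begin{equation*}
\norm{D^4\nu}_0^2+\norm{D^4(b_0\cdot\nabla\zeta)}_0^2 \lesssim \norm{\bp^4\nu}_0^2+\norm{\bp^4(b_0\cdot\nabla\zeta)}_0^2+\norm{\curl\nu}_3^2+\norm{\Div\nu}_3^2+\norm{\curl b}_3^2+\norm{\Div b}_3^2+\text{l.o.t.},
\end{equation*}
and inserting \eqref{teee}, \eqref{curlest}, \eqref{divest} yields the claimed estimate \eqref{teedde}. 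Combining this with the transport bound \eqref{etaest} for $\zeta$ and the boundary control of $\bp^4 \Lambda_\kappa \zeta_i \a^\kappa_{i1}$ from Proposition \ref{te} closes Proposition \ref{th412}.
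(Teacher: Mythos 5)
Your overall framework --- reduce $\partial_r$-derivatives to $\partial_z$-derivatives plus $\Div\nu$, $\curl\nu$, and a $v^r/r$-type remainder, then invoke Propositions \ref{te}, \ref{curlprop}, \ref{divprop} --- matches the paper, and the treatment of $\partial_r v^z$ via the curl identity is fine. The gap is in how you propose to handle the singular term. The higher-order Hardy inequality (Lemma \ref{hd1}) gives $\|\partial_z^3(v^r/r)\|_0 \lesssim \|\partial_z^3 v^r\|_1$, and the right-hand side necessarily contains $\|\partial_r\partial_z^3 v^r\|_0$, which is precisely the quantity you are trying to bound at that step of the induction; there is no way to estimate $g/r$ in $L^2(r\,dr)$ without spending a full $\partial_r$ on $g$. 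So the Hardy-based step is circular, and the same circularity recurs at every stage $a\geq 1$ of your induction, both for $\nu$ and for $b_0\cdot\nabla\zeta$.

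The paper avoids Hardy entirely at this point. Starting from the divergence estimate one only controls the sum $\|\partial_z^3(\partial_r v^r+v^r/r)\|_0$. The trick in \eqref{sln}--\eqref{divesty} is to expand the square,
\begin{equation*}
\norm{\partial_z^3\partial_r v^r+\partial_z^3\bigl(\tfrac{v^r}{r}\bigr)}_0^2=\norm{\partial_z^3\partial_r v^r}_0^2+\norm{\partial_z^3\bigl(\tfrac{v^r}{r}\bigr)}_0^2+2\int_\Omega\partial_z^3\partial_r v^r\,\partial_z^3\bigl(\tfrac{v^r}{r}\bigr)\,dx,
\end{equation*}
and then compute the cross term by integrating by parts in $r$ against the measure $r\,dr\,dz$. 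Writing $g=\partial_z^3(v^r/r)$ and $\partial_z^3\partial_r v^r=\partial_r(rg)=g+r\partial_r g$, the cross term reduces to a nonnegative interior contribution plus a boundary term $R_0^2\int_{\mathbb T} g^2(R_0,z)\,dz\geq 0$. Because the cross term is nonnegative, both $\norm{\partial_z^3\partial_r v^r}_0^2$ and $\norm{\partial_z^3(v^r/r)}_0^2$ are simultaneously dominated by the already-controlled sum, without ever invoking Hardy and hence without circularity. The same computation is iterated for higher powers of $\partial_r$ in \eqref{sln2}--\eqref{divesty2}, and verbatim for $b_0\cdot\nabla R$. You should replace the Hardy step by this sign-definite cross-term argument; without it, the induction does not close.
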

\begin{proof}
First, by using the Proposition \ref{curlprop} and the Proposition \ref{te}, we have
\begin{equation}
\label{fefe}
\begin{split}
&\norm{\partial_z^3\partial_rv^z}_0^2\leq M_0+CTP\left(\sup_{t\in[0,T]} \mathfrak{E}^{\kappa}(t)\right),\\
&\norm{\partial_z^3(\partial_rv^r+\dfrac{1}{r}v^r)}_0^2\leq M_0+CTP\left(\sup_{t\in[0,T]} \mathfrak{E}^{\kappa}(t)\right).
\end{split}
\end{equation}
By direct calculation, we have
\begin{equation}
	\norm{\partial_z^3\partial_r v^r+\partial_z^3(\dfrac{v^r}{r})}_0^2=\norm{\partial_z^3\partial_r v^r}_0^2+\norm{\partial_z^3(\dfrac{v^r}{r})}_0^2+2\int_{\Omega}\partial_z^3\partial_r(r\dfrac{v^r}{r})\partial_z^3(\dfrac{v^r}{r})\,dx
	\label{sln}
\end{equation}
and by integration-by-parts, the last term of the above equality can be calculated as
\begin{equation}	2\int_{\Omega}\partial_z^3\partial_r(r\dfrac{v^r}{r})\partial_z^3(\dfrac{v^r}{r})\,dx=\int_{\Omega}|\partial_z^3(\dfrac{v^r}{r})|^2\,dx+\int_{\mathbb T}\abs{\partial_z^3(\dfrac{v^r}{r})}^2(R_0,z)R_0^2\,dz
	\label{sfo}
\end{equation}
Thus, we arrive at
\begin{equation}
	\norm{\partial_z^3\partial_rv^r}_0^2+\norm{\partial_z^3(\dfrac{v^r}{r})}_0^2\leq M_0+CTP\left(\sup_{t\in[0,T]} \mathfrak{E}^{\kappa}(t)\right).
	\label{divesty}
\end{equation}
Next,  we use the Proposition \ref{curlprop} and \eqref{divesty} to obtain
\begin{equation}
\norm{\partial_z^2\partial_r^2v^z}_0^2\leq M_0+CTP\left(\sup_{t\in[0,T]} \mathfrak{E}^{\kappa}(t)\right),
\end{equation}
and use the Proposition \ref{divprop} and \eqref{fefe} to obtain
\begin{equation*}
\norm{\partial_z^2\partial_r(\partial_r v^r+\dfrac{1}{r}v^r)}_0^2\leq M_0+CTP\left(\sup_{t\in[0,T]} \mathfrak{E}^{\kappa}(t)\right),
\end{equation*}
By direct calculation, we have
\begin{equation}
	\norm{\partial_z^2\partial_r^2 v^r+\partial_z^2\partial_r(\dfrac{v^r}{r})}_0^2=\norm{\partial_z^2\partial_r^2 v^r}_0^2+\norm{\partial_z^2\partial_r(\dfrac{v^r}{r})}_0^2+2\int_{\Omega}\partial_z^2\partial_r^2(r\dfrac{v^r}{r})\partial_z^2\partial_r(\dfrac{v^r}{r})\,dx
	\label{sln2}
\end{equation}
and by integration-by-parts, the last term of the above equality can be calculated as
\begin{equation}	2\int_{\Omega}\partial_z^2\partial_r^2(r\dfrac{v^r}{r})\partial_z^2\partial_r(\dfrac{v^r}{r})\,dx=3\int_{\Omega}|\partial_z^2\partial_r(\dfrac{v^r}{r})|^2\,dx+\int_{\mathbb T}\abs{\partial_z^2\partial_r(\dfrac{v^r}{r})}^2(R_0,z)R_0^2\,dz.
	\label{sfo2}
\end{equation}
Thus, we arrive at
\begin{equation}
	\norm{\partial_z^2\partial_r^2v^r}_0^2+\norm{\partial_z^2\partial_r(\dfrac{v^r}{r})}_0^2\leq M_0+CTP\left(\sup_{t\in[0,T]} \mathfrak{E}^{\kappa}(t)\right).
	\label{divesty2}
\end{equation}
Last, we can repeat the above process inductively to bound the $\norm{\partial_z\partial_r^3(v^r,v^z)}_0^2+\norm{\partial_r^4(v^r,v^z)}_0^2$ by
$M_0+TP\left(\sup\limits_{t\in[0,T]} \mathfrak{E}^{\kappa}(t)\right)$.

The estimates for $b_0\cdot\nabla R, b_0\cdot\nabla Z$ can be obtained by a similar way and the proposition is proved.
\end{proof}
\subsubsection{Synthesis}
Combining the estimates \eqref{teedde}, \eqref{00estimate} and \eqref{etaest}, we prove the Proposition \ref{th412}.

\subsection{A priori estimates for approximate system \eqref{sub2}}\label{epes}
We derive the high order energy estimates for $(v^{\theta},\Theta)$ by standard energy method.
\begin{proposition}\label{vthe}
For $t\in [0,T]$, it holds that
\begin{equation}
\label{vtheq}
\norm{v^{\theta}(t)}_4^2+\norm{ Rb_0\cdot\nabla\Theta(t)}_4^2\leq M_0+TCP\left(\sup_{t\in [0,T]}\mathfrak E(t)\right).
\end{equation}
\end{proposition}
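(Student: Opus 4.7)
The plan is to reduce \eqref{sub2} to a symmetric first-order coupled system and then run a standard tangential energy method in which no boundary contribution appears. Introducing the auxiliary unknown $w = R\,b_0\cdot\nabla\Theta$ and using $\partial_t\Theta = v^\theta/R$ together with the relation $\partial_t R = v^r + \psi^\kappa_1$ inherited from \eqref{sub1}, I first compute
\begin{equation*}
\partial_t w = b_0\cdot\nabla v^\theta + \frac{(v^r+\psi^\kappa_1)\,w}{R} - \frac{v^\theta}{R}\,b_0\cdot\nabla R,
\end{equation*}
while the second equation of \eqref{sub2} becomes
\begin{equation*}
\partial_t v^\theta = b_0\cdot\nabla w + \frac{w\,b_0\cdot\nabla R}{R} - \frac{v^\theta v^r}{R}.
\end{equation*}
Thus the top-order operator $b_0\cdot\nabla$ couples $v^\theta$ and $w$ in a skew-symmetric way with respect to the weighted $L^2$ inner product with density $r$, because by \eqref{bcond} the vector $b_0$ is divergence-free in the cylindrical sense and satisfies $b_0^r|_\Gamma = 0$.

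For the higher-order estimate I will apply $D^\alpha$ with $|\alpha|\le 4$ to both equations, test against $D^\alpha v^\theta$ and $D^\alpha w$ respectively, and add. Integration by parts in the weighted measure yields
\begin{equation*}
\int_\Omega\bigl(b_0\cdot\nabla D^\alpha w\bigr)\,D^\alpha v^\theta \, r\,dr\,dz\,d\theta + \int_\Omega \bigl(b_0\cdot\nabla D^\alpha v^\theta\bigr)\,D^\alpha w\,r\,dr\,dz\,d\theta = 0,
\end{equation*}
so that the highest-order contributions cancel cleanly without any boundary term. What remains are the commutators $[D^\alpha, b_0\cdot\nabla]w$ and $[D^\alpha, b_0\cdot\nabla]v^\theta$, plus $D^\alpha$ applied to the lower-order right-hand sides; these will be estimated using the product and commutator inequalities \eqref{co0}--\eqref{co2} in terms of $\|b_0\|_4$, $\|v^\theta\|_4$, $\|w\|_4$ and $\|R\|_4$, with the $\psi^\kappa_1$-contribution controlled through Lemma \ref{preest}.

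The main obstacle is the cylindrical singularity appearing in the forcing terms $v^\theta v^r/R$, $w\,b_0\cdot\nabla R/R$ and their $D^\alpha$-descendants. Since the vanishing conditions $v_0^\theta(0,z) = 0$ and $b_0^r(0,z) = 0$ are propagated by the flow and $R\sim r$ near the axis (by \eqref{inin3} together with $R|_{t=0}=r$), the higher-order Hardy inequality (Lemma \ref{hd1}) produces
\begin{equation*}
\Bigl\|\frac{v^\theta}{R}\Bigr\|_3 \lesssim \|v^\theta\|_4, \qquad \Bigl\|\frac{w}{R}\Bigr\|_3 \lesssim \|w\|_4, \qquad \Bigl\|\frac{b_0\cdot\nabla R}{R}\Bigr\|_3 \lesssim P(\|b_0\|_4,\|R\|_4),
\end{equation*}
so every lower-order forcing term is bounded by $P(\mathfrak E(t))$. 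Combining the $L^2$ energy identity with the $|\alpha|=4$ estimate and integrating in time gives
\begin{equation*}
\|v^\theta(t)\|_4^2 + \|w(t)\|_4^2 \le M_0 + TCP\bigl(\sup_{t\in[0,T]}\mathfrak E(t)\bigr),
\end{equation*}
which, upon unpacking $w = R\,b_0\cdot\nabla\Theta$, is precisely \eqref{vtheq}.
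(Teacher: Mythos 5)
Your proposal is correct and follows essentially the same approach as the paper: integration by parts for $b_0\cdot\nabla$ using \eqref{bcond} to kill boundary and divergence contributions, commutator estimates \eqref{co0}--\eqref{co2}, and the higher-order Hardy inequality of Lemma \ref{hd1} to tame the $1/R$ singularity. The only cosmetic difference is that you introduce $w=Rb_0\cdot\nabla\Theta$ explicitly and test both equations of the symmetric system, whereas the paper tests only the $v^\theta$-equation and recovers $\frac{1}{2}\frac{d}{dt}\int\abs{Rb_0\cdot\nabla\Theta}^2$ by substituting $v^\theta=R\partial_t\Theta$ in the cross term; these are algebraically the same computation.
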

\begin{proof}
Taking $L^2$ inner product with $v^{\theta}$ and using integration-by-parts yields
\begin{equation*}
	\begin{split}
	&\dfrac{1}{2}\dfrac{d}{dt}\int_{\Omega}|v^{\theta}|^2\,dx+\int_{\Omega} Rb_0\cdot\nabla\Theta b_0\cdot\nabla v^{\theta}\,dx\\&=\int_{\Omega}\left(-\dfrac{v^{\theta} v^r}{{R}}
+b_0\cdot\nabla {R}b_0\cdot\nabla\Theta\right) v^{\theta}\,dx
\\&=2\pi\int_{\mathbb T}\int_0^{R_0}\left(-\dfrac{ v^{\theta} v^r}{{\Rk}}
+b_0\cdot\nabla {R}b_0\cdot\nabla\Theta\right) v^{\theta}r\,drdz\\
&\leq C\norm{v^{\theta}}_0\norm{-\dfrac{ v^{\theta} v^r}{{R}}
+b_0\cdot\nabla {R}b_0\cdot\nabla\Theta}_0
\leq C(\sqrt{M})\norm{v^{\theta}}_0
\end{split}
\end{equation*}
where we used Hardy's inequality
$$\norm{\dfrac{v^{\theta}}{{R}}}_0\ls \norm{\dfrac{v^{\theta}}{r}}_0\norm{\dfrac{r}{{R}}}_{L^{\infty}(\Omega)}\ls \norm{v^{\theta}}_1, \norm{\dfrac{b_0\cdot\nabla {R}}{{R}}}_0\ls \norm{\dfrac{b_0\cdot\nabla {R}}{r}}_0\norm{\dfrac{r}{{R}}}_{L^{\infty}(\Omega)}\ls \norm{b_0\cdot\nabla {R}}_1.$$
For $\int_{\Omega}{R}b_0\cdot\nabla\Theta b_0\cdot\nabla v^{\theta}\,dx$, we have
\begin{equation}
	\begin{split}
		\int_{\Omega} Rb_0\cdot\nabla\Theta b_0\cdot\nabla v^{\theta}\,dx=&\dfrac{1}{2}\dfrac{d}{dt}\int_{\Omega}\abs{ Rb_0\cdot\nabla\Theta}^2\,dx-\int_{\Omega} R b_0\cdot\nabla\Theta \partial_t R b_0\cdot\nabla\Theta\,dx\\&+\int_{\Omega}b_0\cdot\nabla\Theta b_0\cdot\nabla R v^{\theta}\,dx\\\geq&\dfrac{1}{2}\dfrac{d}{dt}\int_{\Omega}\abs{Rb_0\cdot\nabla\Theta}^2\,dx- C(\sqrt{M})(\norm{ Rb_0\cdot\nabla\Theta}_0^2+\norm{v^{\theta}}_0^2).
\end{split}
	\label{fjiji}
\end{equation}
Hence, we have
\begin{equation*}
	\sup_{t\in [0,T]}\left(\norm{v^{\theta}}_0^2+\norm{Rb_0\cdot\nabla\Theta}_0^2\right)\leq M_0+TCP\left(\sup_{t\in [0,T]}\mathfrak{E}^{\kappa}(t)\right).
\end{equation*}
Acting $D^4$ on the second equation in \eqref{sub2}, taking $L^2$ innner product with $D^4 v^{\theta}$ and using integration-by-parts yields
\begin{equation*}
	\dfrac{1}{2}\dfrac{d}{dt}\int_{\Omega}|D^4 v^{\theta}|\,dx+\int_{\Omega}D^4(Rb_0\cdot\nabla\Theta)b_0\cdot\nabla D^4v^{\theta}\,dx=\int_{\Omega}\mathcal{G}D^4v^{\theta}\,dx,
\end{equation*}
where
\begin{equation*}
	\mathcal{G}:=[D^4, b_0\cdot\nabla]Rb_0\cdot\nabla\Theta+D^4\left(-\dfrac{v^{\theta}v^r}{{R}}+b_0\cdot\nabla {R}b_0\cdot\nabla\Theta\right).
\end{equation*}
By using the commutator estimate \eqref{co2}, we have
\begin{equation*}
	\begin{split}
\int_{\Omega}\mathcal{G}D^4 v^{\theta}\,dx&\leq \left(\norm{[D^4, b_0\cdot\nabla] Rb_0\cdot\nabla\Theta}_0+\norm{D^4\left(-\dfrac{v^{\theta}v^r}{{R}}+b_0\cdot\nabla {R}b_0\cdot\nabla\Theta\right)}_0\right)\norm{v^{\theta}}_4\\
&\leq C(\sqrt{M})P\left(\sup_{t\in [0,T]}\mathfrak{E}^{\kappa}(t)\right),
	\end{split}
\end{equation*}
where we estimate
\begin{equation*}
	\begin{split}
	\norm{D^4\left(\dfrac{v^{\theta}}{{R}} v^r\right)}_0&\leq C\left(\norm{{R} D^4(\dfrac{v^{\theta}}{{R}})}_0\abs{\dfrac{v^r}{{R}}}_{L^{\infty}}+\norm{v^r}_4\abs{\dfrac{v^{\theta}}{{R}}}_{L^{\infty}}\right)
	\\&\leq C\left(\norm{D^4v^{\theta}-\left[D^4, \dfrac{v^{\theta}}{{R}}\right]{R}}_0 \abs{\dfrac{v^r}{{R}}}_{L^{\infty}}+\norm{v^r}_4\abs{\dfrac{v^{\theta}}{{R}}}_{L^{\infty}}\right) \leq C
\end{split}
\end{equation*}
and
\begin{equation*}
	\begin{split}
		&\quad\norm{D^4(b_0\cdot\nabla{R}b_0\cdot\nabla\Theta)}_0\\&\leq C\left(\norm{{R} D^4(b_0\cdot\nabla \Theta)}_0\abs{\dfrac{b_0\cdot\nabla {R}}{{R}}}_{L^{\infty}}+\norm{b_0\cdot\nabla{R}}_4\abs{b_0\cdot\nabla\Theta}_{L^{\infty}}\right)
	\\&\leq C\left(\norm{D^4({R}b_0\cdot\nabla\Theta)-\left[D^4, b_0\cdot\nabla\Theta\right]{R}}_0 \abs{\dfrac{b_0\cdot\nabla{R}}{{R}}}_{L^{\infty}}+\norm{b_0\cdot\nabla{R}}_4\abs{b_0\cdot\nabla\Theta}_{L^{\infty}}\right) \\&\leq C
\end{split}
\end{equation*}
by using Hardy's inequality.

Using \eqref{rst} and \eqref{deflag}, we have
\begin{equation*}
	\begin{split}
		&\int_{\Omega}D^4(Rb_0\cdot\nabla\Theta)b_0\cdot\nabla D^4v^{\theta}\,dx\\=&\int_{\Omega}D^4( Rb_0\cdot\nabla\Theta)D^4\left(b_0\cdot\nabla({R}\partial_t\Theta)\right)\,dx\\&+\int_{\Omega}D^4( Rb_0\cdot\nabla\Theta)\left[D^4,b_0\cdot\nabla\right]v^{\theta}\,dx\\=&\dfrac{1}{2}\dfrac{d}{dt}\int_{\Omega}|D^4(R b_0\cdot\nabla\Theta)|^2\,dx-\int_{\Omega}D^4(Rb_0\cdot\nabla\Theta)D^4(\partial_tR b_0\cdot\nabla\Theta)\,dx\\&+\int_{\Omega}D^4(Rb_0\cdot\nabla\Theta)\left(D^4(b_0\cdot\nabla R \dfrac{v^{\theta}}{ R})+[D^4,b_0\cdot\nabla]v^{\theta}\right)\,dx\\\geq& \dfrac{1}{2}\dfrac{d}{dt}\int_{\Omega}|D^4(Rb_0\cdot\nabla\Theta)|^2\,dx - C(\sqrt M)P\left(\sup_{t\in [0,T]}\mathfrak{E}^{\kappa}(t)\right)
\end{split}
\end{equation*}
Hence, we arrive at the conclusion of this proposition.
\end{proof}
\subsection{Proof of Theorem \ref{th43}}

We now collect the estimates derived previously to conclude our estimates and also verify the a priori assumptions \eqref{ini2} and \eqref{inin3}. That is, we shall now present the
\begin{proof}[Proof of Theorem \ref{th43}]
Combining the Proposition \ref{th412} and the Proposition \ref{vthe}, we get that
\begin{equation*}
\sup_{[0,T]}\mathfrak{E}^{\kappa}(t)\leq M_0+TP\left(\sup_{t\in[0,T]} \mathfrak{E}^{\kappa}(t)\right).
\end{equation*}
This provides us with a time of existence $T_1$ independent of $\kappa$ and an estimate on $[0,T_1]$ independent of $\kappa$ of the type:
\begin{equation}
\sup_{[0,T_1]}\mathfrak{E}^{\kappa}(t)\leq 2M_0.
\end{equation}
Since by \eqref{taylor}, $-\partial_r\left(q_0-\hal\frac{C(0)^2}{r^2}\right)\ge \lambda$ on $\Gamma$, $\a^\kappa(0)=I$ and $J^\kappa(0)=1$, the bound \eqref{bound} and \eqref{pressure} verify in turn the a priori bounds \eqref{ini2} and \eqref{inin3} by the fundamental theorem of calculous with taking $T_1$ smaller if necessary.
The proof of Theorem \ref{th43} is thus completed.
\end{proof}
\section{A priori estimates for $\kappa$-approximate system with the condition \eqref{ts}.}
In this section, we derive the a priori estimate for the approximate system with the condition \eqref{ts}. The procedure is almost the same as we did in Section \ref{ae1}, and we can have the following theorem:
\begin{theorem} \label{th432}
There exists a time $T_1$ independent of $\kappa$ such that
\begin{equation}
\label{bound2}
\sup_{[0,T_1]}\mathfrak{E}^{\kappa}(t)\leq 2M_0,
\end{equation}
where $M_0=P\left(\norm{\left(v_0^r, v_0^{\theta}, v_0^z\right)}_4^2+\norm{\left(b_0^r, b_0^{\theta}, b_0^z\right)}_4^2\right).$
\end{theorem}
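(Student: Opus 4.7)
The plan is to follow the same architecture as the proof of Theorem~\ref{th43}: split system \eqref{eq:mhd} into the two sub-systems \eqref{sub1}, \eqref{sub2}; derive pressure estimates via the elliptic problem \eqref{ell}; perform tangential energy estimates in terms of the Alinhac good unknowns $\mathcal V,\mathcal Q$ defined in \eqref{gun}; recover normal derivatives from the $\curl$ and $\Div$ equations as in Propositions~\ref{curlprop}--\ref{nore}; and close the energy estimate for $(v^\theta,\Theta)$ by the standard method of Proposition~\ref{vthe}. All parts of the argument except the control of the boundary integral $\mathcal I_b$ (arising from integration by parts of $\int_\Omega \pa^{\ak}_{\zeta_i}\mathcal Q\,\mathcal V_i$) are insensitive to where \eqref{stc} holds and can be copied verbatim, because they only use interior regularity plus the pressure bound \eqref{pressure}. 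Hence the core new step is to bound $\mathcal I_b$ under the weaker hypothesis \eqref{ts}, where the Rayleigh--Taylor sign may fail on the open set $\Gamma\setminus\gamma=\{(R_0,z):|b_0^z(R_0,z)|>0\}$.

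To handle $\mathcal I_b$, I would introduce a smooth partition of unity $1=\chi_1+\chi_2$ on $\Gamma$ subordinate to a covering where $\chi_1$ is supported in an open neighborhood $U_1\subset\Gamma$ of $\gamma$ on which \eqref{ts} gives $-\partial_r(q-\tfrac12 C^2/(\Rk)^2)\ge \lambda/4>0$ (possible by continuity of $b_0^z$ and of $\partial_r q_0$, after shrinking $T_\kappa$), while $\chi_2$ is supported in an open set $U_2\subset\Gamma$ where $|b_0^z|\ge \delta>0$. Writing $\mathcal I_b=\mathcal I_b^{(1)}+\mathcal I_b^{(2)}$ according to this decomposition, on $U_1$ I carry out \emph{exactly} the computation \eqref{req}--\eqref{ggiip}, using the symmetric structure and the sign condition to produce the nonnegative boundary contribution $\tfrac12\frac{d}{dt}\int_{U_1}\chi_1 R_0[-\partial_r(q-\tfrac12 C^2/\Rk{}^2)]|\a^\kappa_{i1}\bp^4\Lambda_\kappa\zeta_i|^2\,dz$ plus remainder terms bounded by $P(\sup_{[0,T]}\mathfrak E^\kappa)$; the modification term $\fk$ is essential here to kill the analogues of $\mathcal I_{2b},\mathcal I_{4b}$.

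On the portion $U_2$, where the non-collinearity condition $|b_0^z|\ge\delta$ holds, I abandon the Rayleigh--Taylor structure and instead follow the approach of \cite{Gu_17}: use the equation $\partial_t(b_0\cdot\nabla\zeta)=b_0\cdot\nabla\nu+b_0\cdot\nabla\fk$ together with $b_0^z\ne 0$ on $U_2$ to trade $\bp^4\zeta$-boundary regularity for $b_0\cdot\nabla\zeta$-boundary regularity, i.e.\ obtain $|\chi_2\bp^{7/2}\zeta|_0\lesssim |\chi_2\bp^3(b_0\cdot\nabla\zeta)|_{1/2}/\delta+\text{l.o.t.}$, where the right-hand side is controlled by $\|b_0\cdot\nabla\zeta\|_4$, already part of $\mathfrak E^\kappa$. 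With $|\chi_2\bp^4\zeta|_{-1/2}$ thereby controlled, the integral $\mathcal I_b^{(2)}=\int_{U_2}\chi_2(\cdots)$ is estimated by $(H^{-1/2},H^{1/2})$ duality against $\bp^4\mathcal Q$ on the boundary, and $|\bp^4\mathcal Q|_{1/2}$ is bounded using the pressure estimate \eqref{pressure} plus the boundary form of $\mathcal Q$ in \eqref{qbdc}.

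The main obstacle is the overlap region $U_1\cap U_2$: the RT-based bound on $U_1$ and the non-collinearity-based bound on $U_2$ are structurally different, and one must verify that on the intersection both estimates coexist without generating uncontrolled boundary terms after integration by parts with $\chi_j$. I would handle this by only integrating by parts in the tangential direction $\bp=\partial_z$ on the torus $\mathbb T$ (so no endpoint boundary terms appear) and absorbing all commutators $[\bp,\chi_j]$ into lower-order remainders via \eqref{co1}; the fact that $\chi_1,\chi_2$ depend only on $z$ and are $\kappa$-independent makes these commutator terms admissible. Once $\mathcal I_b$ is controlled, the remaining estimates proceed as in Section~\ref{ae1}, yielding $\sup_{[0,T_1]}\mathfrak E^\kappa(t)\le 2M_0$ for some $T_1$ independent of $\kappa$, which is \eqref{bound2}.
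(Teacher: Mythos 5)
Your overall strategy matches the paper's: split the boundary into a region where the Rayleigh--Taylor sign holds (and carry out the symmetric-structure argument of Proposition~\ref{te} there) and a region where the non-collinearity $|b_0^z|\ge\delta$ holds (and there exchange $\partial_z$ for $b_0\cdot\nabla$ on $\Gamma$, then close by an $(H^{-1/2},H^{1/2})$ duality). The paper implements the split by the disjoint decomposition $\Gamma=\gamma'\cup(\Gamma\setminus\gamma')$ rather than a partition of unity; since $\Lambda_\kappa$ is a convolution, restricting integrals to a proper subset of $\mathbb T$ does require a little care, and your partition-of-unity variant with $z$-dependent, $\kappa$-independent cutoffs and commutators controlled by \eqref{co1} is a reasonable and arguably more careful way to do this. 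You also correctly identify that \eqref{ts} propagates for a short time to an open neighborhood $\gamma'\supset\gamma$ and that $|b_0^z|\ge\delta$ on the complementary compact set.

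There is, however, a genuine gap in the bookkeeping for $\mathcal I_b^{(2)}$. You claim that non-collinearity gives control of $|\chi_2\,\bp^4\zeta|_{-1/2}$ (equivalently $\zeta$ at the $H^{7/2}(\Gamma)$ level) and that the companion factor $\mathcal Q$ (or $\bp^4\mathcal Q$) can then be placed in $H^{1/2}$ ``by the pressure estimate.'' This does not close. First, the pressure estimate \eqref{pressure} gives $q\in H^4(\Omega)$, hence at best $\bp^3 q$ in $H^{1/2}(\Gamma)$ by trace, not $\bp^4 q$; moreover the boundary form \eqref{qbdc} shows that $\mathcal Q$ on $\Gamma$ is a linear combination of $\bp^4\Rk$ and $\bp^4\zeta$ with bounded coefficients, so bounding $|\mathcal Q|_{1/2}$ needs precisely $|\bp^4\zeta|_{1/2}$, the quantity you are trying to avoid. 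Second, the paper's estimate \eqref{gest144} relies on the fact that non-collinearity yields the \emph{stronger} bound
\begin{equation*}
|\bp^4\Lambda_\kappa^2\zeta|_{H^{1/2}(\Gamma\setminus\gamma')}
=\left|\bp^3\!\left(\dfrac{b_0^z\,\bp\Lambda_\kappa^2\zeta}{b_0^z}\right)\right|_{H^{1/2}(\Gamma\setminus\gamma')}
\lesssim \dfrac{1}{\delta}\,\norm{b_0\cdot\nabla\zeta}_4,
\end{equation*}
because $b_0\cdot\nabla=b_0^z\partial_z$ on $\Gamma$; this places $\bp^4\zeta$ in $H^{1/2}$, not $H^{-1/2}$, and that extra full derivative is exactly what lets you pair against $\mathcal V_i\ak_{i1}\in H^{-1/2}(\Gamma)$ (which is controlled by $\norm{\nu}_4$). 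Your stated gain $|\chi_2\bp^{7/2}\zeta|_0\lesssim|\chi_2\bp^3(b_0\cdot\nabla\zeta)|_{1/2}/\delta$ undersells the non-collinearity by one derivative and thereby places the difficult factor $\bp^4\zeta$ in the wrong Sobolev slot. Once you redo the count and take $\bp^4\zeta\in H^{1/2}$ (paired against $\mathcal V\in H^{-1/2}$, not against $\mathcal Q$), the argument closes as in \eqref{gest144}.
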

The only difference occurs in the derivation of the high order tangential energy estimate (Proposition \ref{te}), and we just explain how to deal with the difference in this subsection. That is, we can have the high order tangential energy estimate:
\begin{proposition}\label{te2}
Under the condition \eqref{ts}, for $t\in [0,T]$, it holds that
\begin{equation}
\label{teee2}
\norm{\partial_z^4 \nu}_0^2+\norm{\partial_z^4(b_0\cdot\nabla \zeta)(t)}_0^2+\abs{\bp^4\Lambda_{\kappa}\zeta_i \a^{\kappa}_{i1}(t)}_{L^2(\gamma')}^2\leq M_0+CTP\left(\sup_{t\in[0,T]}\mathfrak{E}^{\kappa}(t)\right).
\end{equation}
\end{proposition}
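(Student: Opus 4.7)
The plan is to mimic the proof of Proposition~\ref{te} line by line; the single substantive modification occurs in the treatment of the boundary integral $\mathfrak{I}$ produced by the integration by parts of $\int_\Omega\pa^{\ak}_{\zeta_i}\mathcal{Q}\,\mathcal{V}_i$ in \eqref{gg2}. I introduce a smooth partition of unity $1=\chi_1+\chi_2$ on $\Gamma$ with $\chi_j\ge 0$, where $\chi_1$ is supported in a neighborhood $\gamma'$ of $\gamma=\{b_0^z=0\}$ on which, by continuity of $b_0^z$ and $\partial_r q$, the sign condition \eqref{ts} persists in the slightly weaker form $-\partial_r\bigl(q-\hal C^2/(\Rk)^2\bigr)\ge\lambda/4$ for all small $t$, while $\chi_2$ is supported in $\Gamma\setminus\gamma$ where $|b_0^z|\ge\delta>0$. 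All bulk contributions, the divergence identity \eqref{divValpha}, the commutator bound \eqref{comest}, and the transport/pressure/curl/divergence arguments of Section~\ref{apest} are unaffected and deliver only $P(\mathfrak E^\kappa)$ on the right of \eqref{teee2}.

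For $\mathfrak I_{\chi_1}$ I rerun the symmetric-structure identity \eqref{req} and the decomposition \eqref{gg3} with $\chi_1$ inserted as a nonnegative weight in every boundary integral. The unique positive-definite contribution becomes $\hal\frac{d}{dt}\int_\Gamma\chi_1\,R_0\,\partial_r\bigl(\hal C^2/(\Rk)^2-q\bigr)\,|\ak_{i1}\bp^4\Lambda_\kappa\zeta_i|^2$, which under the extended sign condition is bounded below by $\tfrac{\lambda}{8}\,|\bp^4\Lambda_\kappa\zeta_i\ak_{i1}|_{L^2(\gamma')}^2$ upon choosing $\chi_1\equiv 1$ on $\gamma'$. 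All remaining terms of \eqref{gg3}, in particular the delicate $\kappa$-uniform treatment of $\mathcal I_{2b},\mathcal I_{4b},\mathcal I_5$ that exploits the modifier $\fk$, the bound \eqref{lwuqing}, and Lemma~\ref{comm11}, carry over with $\chi_1$ acting as a harmless smooth multiplier.

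The genuinely new estimate is for $\mathfrak I_{\chi_2}$, where no boundary energy may be generated and non-collinearity must be used instead. Since $b_0^r=0$ on $\Gamma$ by \eqref{bcond}, the operator $b_0\cdot\nabla$ reduces to $b_0^z\pa_z$ on $\Gamma$, and on $\supp\chi_2$ the lower bound $|b_0^z|\ge\delta$ gives the pointwise identity $\partial_z\zeta=(b_0^z)^{-1}b_0\cdot\nabla\zeta$. Iterating this identity together with \eqref{test3} yields the $\kappa$-independent enhanced boundary regularity
\begin{equation*}
|\chi_2\,\bp^4\Lambda_\kappa^2\zeta|_{1/2}\;\le\;P\bigl(\tfrac1\delta,\|b_0\|_4\bigr)\bigl(\|b_0\cdot\nabla\zeta\|_4+\|\zeta\|_4\bigr)\;\le\;P(\mathfrak E^\kappa).
\end{equation*}
Pairing this with the dual bound $|\mathcal V_i\ak_{i1}|_{H^{-1/2}(\Gamma)}\ls\|\mathcal V\|_0+\|\Div_{\ak}\mathcal V\|_0$, which follows from \eqref{divValpha}, \eqref{comest} and the standard trace lemma for vector fields with $L^2$ divergence, I bound $\mathfrak I_{\chi_2}$ directly by $P(\mathfrak E^\kappa)$ through $(H^{-1/2},H^{1/2})$ duality, without producing any boundary-energy term.

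The main obstacle is ensuring $\kappa$-uniformity while merging the two regimes: the analogues of $\mathcal I_{2b}$ and $\mathcal I_{4b}$ localized by $\chi_2$ no longer admit the $\fk$-cancellation of \eqref{tmp2}--\eqref{temp3}, so each must instead be absorbed by the duality estimate, using the non-collinearity bound on $|\chi_2\bp^4\Lambda_\kappa^2\zeta|_{1/2}$ in place of \eqref{lwuqing}. Once this is verified, combining $\mathfrak I_{\chi_1}$ and $\mathfrak I_{\chi_2}$ and arguing exactly as for \eqref{ohoh}--\eqref{ohoh1} yields \eqref{teee2} after integration in time, the normal/tangential syntheses of Proposition~\ref{nore} then closing the estimates as in Section~\ref{apest}.
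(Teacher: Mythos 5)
Your strategy coincides with the paper's: you split the boundary term $\mathfrak I$ of \eqref{gg2} into a piece near $\gamma$ (where the Rayleigh--Taylor sign persists and the symmetric-structure argument of Proposition~\ref{te} yields a positive $\tfrac{d}{dt}$-energy) and a piece away from $\gamma$ (where $|b_0^z|\ge\delta$ and the boundary integral is closed by $(H^{-1/2},H^{1/2})$ duality after exchanging one $\partial_z$ for $b_0\cdot\nabla/b_0^z$). The only cosmetic divergence is that you use a smooth partition of unity $\chi_1+\chi_2=1$ while the paper integrates over the sharp cut $\gamma'$ and $\Gamma\setminus\gamma'$; both work here since no integration by parts in $z$ is performed on the boundary, so the smoothness of the cutoff buys nothing beyond tidiness, and the paper's hard split avoids having to track where $\chi_1\equiv 1$ versus where merely $\chi_1>0$ (your description of $\chi_1$ ``supported in $\gamma'$'' and at the same time ``$\equiv 1$ on $\gamma'$'' needs the usual nested-neighborhood fix, but that is routine). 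One passage in your last paragraph is a red herring: once you decide to estimate $\mathfrak I_{\chi_2}$ directly by duality, as you do and as the paper does on $\Gamma\setminus\gamma'$, the decomposition producing $\mathcal I_{2b},\mathcal I_{4b},\mathcal I_5$ is never performed on that piece, so there is no ``$\chi_2$-localized $\mathcal I_{2b},\mathcal I_{4b}$'' whose $\fk$-cancellation you would need to worry about; conversely, on the $\chi_1$-piece the $\fk$-cancellation of \eqref{tmp2}--\eqref{temp3} goes through verbatim with $\chi_1$ riding along as a fixed smooth weight, exactly as you say earlier. So the worry you flag does not arise, and the argument is sound and matches the paper.
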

\begin{proof}
The proof of this Proposition is quite similar as the proof of the Proposition \ref{te}. The only difference is the estimate of term $\mathfrak I$ of \eqref{gg2}. Recalling the equality for $\mathfrak J$:
\begin{equation}
\begin{split}
\mathfrak I&=\int_{\mathbb T} R_0(\bp^4q-\bp^4\Lambda_\kappa^2\zeta_j\ak_{j2}\partial_z q)\ak_{i1}\mathcal{V}_i\,dz-\int_{\bT}R_0\bp^4\Lambda_\kappa^2\zeta_j \ak_{j1}\partial_r q \ak_{i1}\mathcal{V}_i\,dz
\\&= \int_{\mathbb T} -R_0\dfrac{C(t)^2}{{\Rk}^3}\left(\bp^4\Lambda_\kappa^2 R-\bp^4\Lambda_\kappa^2\zeta_j\ak_{j2}\partial_z \Lambda_\kappa^2 R\right)\ak_{i1}\mathcal{V}_i\,dz-\int_{\bT}R_0\bp^4\Lambda_\kappa^2\zeta_j \ak_{j1}\partial_r q \ak_{i1}\mathcal{V}_i\,dz\\&\quad+\int_{\bT}R_0C(t)^2\left[\bp^3, \dfrac{1}{{\Rk}^3}\right]\bp \Lambda_{\kappa}^2 R\ak_{i1}\mathcal{V}_i\,dz
\end{split}
\end{equation}
Since the Rayleigh-Taylor sign condition is satisfied only on $\gamma$, we need to depart the domain $\bT$ into two parts and deal the integral on these two parts differently. We recall the definition of $\gamma$:
$$\gamma=\{(R_0, z)| b_0^z(R_0,z)=0\}.$$
And we can have an open set $\Gamma\supset\gamma'\supset \gamma$ such that
$$\partial_r\left(q_0-\dfrac{1}{2}\abs{\dfrac{C(0)}{r}}^2\right) \leq -\dfrac{\lambda}{2}<0\,\,\text{on}\,\,\gamma'.$$
Then, by following a similar argument from \eqref{req}--\eqref{ggiip}, we can have
\begin{equation}
\begin{split}
&\int_{\gamma'}(\bp^4q-\bp^4\Lambda_\kappa^2\zeta_j\ak_{j2}\partial_z q)\ak_{i1}\mathcal{V}_i-\int_{\gamma'}\bp^4\Lambda_\kappa^2\zeta_j \ak_{j1}\partial_r q \ak_{i1}\mathcal{V}_i
\\&\geq \dfrac{d}{dt}\int_{\gamma'}\partial_r\left(q_0-\dfrac{1}{2}\abs{\dfrac{C(0)}{r}}^2\right) \abs{\bp^4\Lambda_\kappa\zeta_i \ak_{i1}}^2-P\left(\sup_{t\in[0,T]}\mathfrak{E}(t)\right)
\end{split}
\label{gest133}
\end{equation}

On the other hand, $\Gamma\backslash\gamma'$ is a compact set, and we have
\begin{equation}
\label{nonco}
\abs{b_0^z}\geq \delta>0, \text{on}\,\, \Gamma\backslash\gamma',
\end{equation}
where the $\delta$ is a constant depending on $\gamma'$.
The inequality \eqref{nonco} means the non-collinearity condition holds on $\Gamma\backslash\gamma'$. Thus, we can use it to improve the regularity of $\zeta$ on $\Gamma\backslash\gamma'$. That is
\begin{equation}
\label{gest144}
\begin{split}
&\int_{\Gamma\backslash\gamma'}(\bp^4q-\bp^4\Lambda_\kappa^2\zeta_j\ak_{j2}\partial_z q)\ak_{i1}\mathcal{V}_i-\int_{\Gamma\backslash\gamma'}\bp^4\Lambda_\kappa^2\zeta_j \ak_{j1}\partial_r q \ak_{i1}\mathcal{V}_i
\\&\le \abs{\ak Dq}_{L^{\infty}}\abs{\mathcal{V}}_{H^{-1/2}(\Gamma\backslash\gamma')}\abs{\bp^4\Lambda_\kappa^2\zeta}_{H^{1/2}(\Gamma\backslash\gamma')}
\\&\le \abs{\ak Dq}_{L^{\infty}}\abs{\mathcal{V}}_{H^{-1/2}(\Gamma\backslash\gamma')}\abs{\bp^3\left(\dfrac{b_0^z\bp\Lambda_\kappa^2\zeta}{b_0^z}\right)}_{H^{1/2}(\Gamma\backslash\gamma')} \\&\le C(\delta, \norm{b_0}_4, \norm{\ak}_3, \norm{q}_4)\norm{\nu}_4\norm{b_0\cdot\nabla\zeta}_4
\\&\le P\left(\sup_{t\in[0,T]}\mathfrak{E}(t)\right).
\end{split}
\end{equation}
Combining \eqref{gest133} and \eqref{gest144}, we can obtain the estimate for the term $\mathfrak I$ and then prove the proposition.
\end{proof}

\section{Local well-posedness of \eqref{eq:mhd}}
\begin{proof}[Proof of Theorem \ref{mainthm}]
For each $\kappa>0$, we can construct the solutions to the $\kappa$-approximate system \eqref{sub1} and \eqref{sub2} by a similar way in \cite[Section 5]{GW_16}. Briefly, we linearized the $\kappa$-approximate system and solve the linearized system by an aritifial viscosity method. Then a contract map method tells the existence of solutions to $\kappa$-approximate system \eqref{sub1} and \eqref{sub2}. We omit the details here. (The reader can also refer to \cite{Gu_17}.) Then we recover the dependence of the solutions to the $\kappa$-approximate problem \eqref{sub1} on  and \eqref{sub2} as $(v^r(\kappa), v^{\theta}(\kappa), v^z(\kappa), q(\kappa), R(\kappa), \Theta(\kappa), Z(\kappa))$.
The $\kappa$-independent estimates \eqref{bound} imply that  $(v^r(\kappa), v^{\theta}(\kappa), v^z(\kappa), q(\kappa), R(\kappa), \Theta(\kappa), Z(\kappa))$ is indeed a solution of \eqref{sub1} and \eqref{sub2} on the time interval $[0,T_1]$ and yield a strong convergence of $(v^r(\kappa), v^{\theta}(\kappa), v^z(\kappa), q(\kappa), R(\kappa), \Theta(\kappa), Z(\kappa))$ to a limit $(v^r, v^{\theta}, v^z, q, R, \Theta, Z)$, up to extraction of a subsequence, which is more than sufficient for us to pass to the limit as $\kappa\rightarrow0$ in \eqref{sub1} and \eqref{sub2} for each $t\in[0,T_1]$. We then find that $(v^r, v^{\theta}, v^z, q, R, \Theta, Z)$ is a strong solution to \eqref{eq:mhd} on $[0,T_1]$ and satisfies the estimates \eqref{enesti}. This shows the existence of solutions to \eqref{eq:mhd}.
\end{proof}
\begin{proof}[Proof of Theorem \ref{mainthm2}]
Again, for each $\kappa>0$, the construction of the solutions to the $\kappa$-approximate system \eqref{sub1} and \eqref{sub2} is just the same as we mentioned in the proof of Theorem \ref{mainthm}. Then the proof of Theorem \ref{mainthm2} is followed immediately by the $\kappa$-independent estimates \eqref{bound2}.
\end{proof}
\section*{Acknowledgement:}

 The author was in supported by NSFC (grant No. 11601305).

\end{document}